\theoremstyle{plain}
\newtheorem{thm}{Theorem}[section]
\theoremstyle{plain}
\newtheorem{lem}[thm]{Lemma}
\newtheorem{prop}[thm]{Proposition}
\newtheorem{cor}[thm]{Corollary}
\theoremstyle{definition}
\newtheorem{defi}{Definition}[section]
\newtheorem{rem}[thm]{Remark}
\newcommand{\rN}{\mathbb{R}^{N}}
\newcommand{\rk}{\mathbb{R}^k}
\newcommand{\rNk}{\mathbb{R}^{N-k}}
\newcommand{\ro}{\mathbb{R}}
\newcommand{\Om}{\Omega}
\newcommand{\mcal}[1]{\mathcal{#1}}
\newcommand{\md}[1]{\left|#1 \right|}
\newcommand{\nrm}[1]{\left|\left| #1 \right|\right|}
\newcommand{\bct}[1]{\left(#1\right)}
\newcommand{\ta}{\tilde{\alpha}}
\numberwithin{equation}{section} \allowdisplaybreaks
        \title{Extremals For Fractional Order Hardy-Sobolev-Maz'ya Inequality}
\author{Arka Mallick}
\address{TIFR  Centre for Applicable Mathematics, Post Bag No. 6503
 Sharadanagar, \\ Bangalore 560065, India.}
\email{arkamallick02@gmail.com, arka@math.tifrbng.res.in} 
\begin{document}

\maketitle
\begin{abstract}
In this article, we derive the existence of positive solution of a semi-linear, non-local  elliptic PDE, involving a singular perturbation of the fractional laplacian, coming from the fractional Hardy-Sobolev-Maz'ya inequality, derived in this paper. We also derive symmetry properties and a precise asymptotic behaviour of solutions.
\end{abstract}


MSC2010 Classification: {\em 46E35,35J70, 35J10, 35S05, 35B25}\\
Keywords: {\em Fractional Hardy-Sobolev-Maz'ya inequality, existence, cylindrical symmetry.}

\section{Introduction}
 In this article, we study the following equation
 \begin{align}\label{flap_cyl}
\begin{rcases}
\frac{2}{c_{N,s}}\bct{-\Delta}^s u - \beta \frac{u}{|x'|^{2s}} = \frac{u^{2^*_t - 1}}{|x'|^t}, \ \text{in } \rN, \\ u\geq0 , \ \ u\in  \mcal{\dot{H}}^s_{\beta}(\rN),
\end{rcases}
\end{align}
where $c_{N,s}= 2^{2s}\pi^{-N/2}\frac{\Gamma\bct{\frac{N+2s}{2}}}{|\Gamma(-s)|}$, $0<s<1$, $0\leq t<2s$, $2^*_t = \frac{2(N-t)}{N-2s}$, $\rN = \rk\times\rNk$, $2\leq k\leq N-2$ and a point $x\in \rN$ is denoted as $x= (x',x'')\in \rk\times \rNk$. The space $\mcal{\dot{H}}^s_\beta (\rN)$ is defined in Section \ref{Prem}, where $0<\beta \leq \mcal{C}_{N,k,s}$ and $\mcal{C}_{N,k,s}$ is the best constant of the following fractional version of Hardy-Maz'ya inequality

\begin{align}\label{FHM_ineq} 
\int_{\rN}\int_{\rN}\frac{|u(x)-u(y)|^2}{|x-y|^{N+2s}} dx dy \geq \mcal{C}_{N,k,s} \int_{\rN} \frac{u^2(x)}{|x'|^{2s}}dx ,\ \forall u\in C_c^{0,1} \left(\rk \setminus \{0\} \times \mathbb{R}^{N-k}\right).
\end{align}
An explicit expression of $\mcal{C}_{N,k,s}$ and the proof of \eqref{FHM_ineq} can be found in Section \ref{frac_ineq}. For $k=N$ the inequality \eqref{FHM_ineq} has been derived in \cite{FrLiSi}, \cite{FS1}, \cite{Tzi}. Also, see \cite{Ili61}, for similar inequalities.
\par The local counterpart (i.e. for the case $s=1$) of \eqref{FHM_ineq} is derived by Maz'ya in \cite{MV} which can be state as follows
\begin{align}\label{HM_ineq}
\int_{\rN}\md{\nabla u(x)}^2 dx \geq \bct{\frac{k-2}{2}}^2 \int_{\rN} \frac{u^2(x)}{|x'|^2}dx, \ \forall u\in C_c^\infty \left(\rk \setminus \{0\} \times \mathbb{R}^{N-k}\right), 
\end{align}
where the constant $\frac{(k-2)^2}{4}$ is the best possible. When $k=N$, \eqref{HM_ineq} reduces to the usual Hardy inequality. Unlike the case of Hardy inequality, it was Maz'ya \cite{MV}, who first observed that, the following Sobolev type improvement of \eqref{HM_ineq} can be achieved when $k\geq 2$
\begin{align}\label{HSM_ineq}
C_\beta^t \bct{\int_{\rN}\frac{\md{u(x)}^{q}}{\md{x'}^t}dx}^{\frac{2}{q}} \leq \int_{\rN}\md{\nabla u(x)}^2 dx- \beta \int_{\rN} \frac{u^2(x)}{|x'|^2}dx, 
\end{align}
where $\beta \leq \frac{(k-2)^2}{4}$, $0\leq t <2$, $q= \frac{2(N-t)}{N-2}$ and $u\in C_c^\infty \left(\rk \setminus \{0\} \times \mathbb{R}^{N-k}\right)$. Note that, existence of nontrivial solution of \eqref{flap_cyl} will follow for the case of $s=1$, if we can show the existence of minimizers of \eqref{HSM_ineq}. For $\beta=0$, the existence of minimizers of \eqref{HSM_ineq} has been established in \cite{BadTar} by using concentration compactness principle due to P.L Lions \cite{PLL1}, \cite{PLL2}.  Whereas, for $0<\beta<\frac{(k-2)^2}{4}$, the existence is proved in \cite{musina}, by using blow up analysis for approximate solutions with a rescaling argument. On the other hand, since for $\beta = \frac{(k-2)^2}{4}$, the expected space in which the minimizers will belong is much bigger than the same for the case of $\beta<\frac{(k-2)^2}{4}$, one needs to employ a careful analysis. Using a penalty method, Tintarev and Tertikas proved the existence of minimizers for the case of $\beta = \frac{(k-2)^2}{4}$ in \cite{TinTer} and subsequently improved in \cite{MusGaz1}.
\par The cylindrical symmetry of the local counterpart (i.e. $s=1$) of \eqref{flap_cyl}, has been established in \cite{SanManFab} by using moving plane method in the special case of $\beta=0$. In fact, when $t=1$, they have classified all the solutions by a careful asymptotic analysis. Subsequently, in the case of $0\leq\beta<\frac{(k-2)^2}{4}$, $0\leq t<2$ and $s=1$, cylindrical symmetry of solutions of \eqref{flap_cyl} has been established in \cite{MusGaz}. Finally, in a breakthrough paper, Sandeep and Mancini \cite{SanMan}, established the uniqueness of positive extremals of \eqref{HSM_ineq}. See also \cite{CasFabManSan}.

\par Thus we need the following fractional version of Hardy-Sobolev-Mazya inequality, to prove the existence of solution of \eqref{flap_cyl}. 

\begin{align}\label{FHSM_ineq}
C\bct{\int_{\rN}\frac{\md{u(x)}^{2^*_t}}{\md{x'}^t}dx}^{\frac{2}{2^*_t}} \leq \int_{\rN}\int_{\rN}\frac{|u(x)-u(y)|^2}{|x-y|^{N+2s}} dx dy - \beta \int_{\rN} \frac{u^2(x)}{|x'|^{2s}}dx,
\end{align}
where $0\leq t <2s$, $2^*_t:= \frac{2(N-t)}{N-2s}$, $0\leq \beta \leq \mcal{C}_{N,k,s}$, $\mcal{C}_{N,k,s}$ is the optimal constant of \eqref{FHM_ineq} and $C>0$ is a constant independent of $u$. 
We will establish the inequality \eqref{FHSM_ineq} in Section \ref{frac_ineq}. In Section \ref{exis}, we have proved the following theorems to establish the existence of a solution to the equation \eqref{flap_cyl}.

\begin{thm}\label{exis_FHSM}
Let $0<s<1$, $0\leq t<2s$ and $0\leq\beta \leq \mcal{C}_{N,k,s}$. Define $S^t_k(\beta)$ in the following manner

\begin{align}\label{bst_const1}
S^t_k(\beta) := \displaystyle \sup_{\substack{u\in C_c^{0,1} \left(\rk \setminus \{0\} \times \mathbb{R}^{N-k}\right),\\ u\neq 0}} \frac{\int_{\rN}\frac{\left|u\right|^{2^*_t}}{|x'|^t}}{\left(\int_{\rN}\int_{\rN}\frac{|u(x)-u(y)|^2}{|x-y|^{N+2s}} dx dy - \beta \int_{\rN} \frac{u^2(x)}{|x'|^{2s}}dx\right) ^{\frac{2^*_t}{2}}}.
\end{align}
Then, the supremum is achieved in $\mcal{\dot{H}}_\beta^s(\rN)$, if $0<\beta<\mcal{C}_{N,k,s},$ where the space $\mcal{\dot{H}}_\beta^s(\rN)$ is defined in Section \ref{Prem}.
\end{thm}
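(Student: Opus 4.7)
The strategy is the direct method combined with a concentration-compactness argument, tailored to the two-parameter noncompact symmetry group of the Rayleigh-type quotient in \eqref{bst_const1}: the isotropic dilations $u(x)\mapsto \lambda^{(N-2s)/2}u(\lambda x)$ and the translations $u(x',x'')\mapsto u(x',x''+y'')$ along the $\rNk$-factor, both of which leave numerator and denominator invariant. Let $(u_n)$ be a maximizing sequence. Because $\beta<\mcal{C}_{N,k,s}$ strictly, the quantity
\[
D(u):=\int_{\rN}\int_{\rN}\frac{|u(x)-u(y)|^2}{|x-y|^{N+2s}}\,dx\,dy-\beta\int_{\rN}\frac{u^2}{|x'|^{2s}}\,dx
\]
is comparable to the square of the standard Gagliardo seminorm by \eqref{FHM_ineq}; after normalizing $D(u_n)=1$, the sequence is thus bounded in $\mcal{\dot{H}}_\beta^s(\rN)$ with $\int_{\rN}|u_n|^{2^*_t}/|x'|^t\,dx\to S^t_k(\beta)$.

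Next I would use the two invariances to fix a good orbit representative. Introduce the concentration function
\[
Q_n(R):=\sup_{y''\in\rNk}\int_{B_R(0,y'')}\frac{|u_n|^{2^*_t}}{|x'|^t}\,dx;
\]
since $Q_n(0^+)=0$ and $Q_n(\infty)\to S^t_k(\beta)$, I dilate so that $Q_n(1)=\tfrac12 S^t_k(\beta)+o(1)$ and translate in $x''$ so that a near-optimal ball for $Q_n(1)$ sits at the origin. Extracting a subsequence, $u_n\rightharpoonup u$ weakly in $\mcal{\dot{H}}_\beta^s(\rN)$, pointwise a.e., and strongly in $L^2_{\mathrm{loc}}(\rN\setminus\{x'=0\})$.

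A Brézis-Lieb type decomposition is then applied to $v_n:=u_n-u$ separately in the Gagliardo seminorm, in the weighted $L^2$ Hardy integral (weak convergence in $\mcal{\dot{H}}_\beta^s$ combined with \eqref{FHM_ineq} annihilates the cross term $\int u v_n/|x'|^{2s}\,dx$), and in the weighted $L^{2^*_t}$ integral. Setting $A:=D(u)$, $B_n:=D(v_n)$, $\alpha:=\int|u|^{2^*_t}/|x'|^t\,dx$, $\gamma_n:=\int|v_n|^{2^*_t}/|x'|^t\,dx$, this yields $\alpha+\gamma_n\to S^t_k(\beta)$ and $A+B_n\to 1$, together with the two constraints $\alpha\le S^t_k(\beta)A^{2^*_t/2}$ and $\gamma_n\le S^t_k(\beta)B_n^{2^*_t/2}$ coming from the definition of $S^t_k(\beta)$. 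Since $2^*_t/2>1$, the strict superadditivity $A^{2^*_t/2}+B_n^{2^*_t/2}\le(A+B_n)^{2^*_t/2}$, with equality only when one term vanishes, forces either $u\equiv 0$ or $B_n\to 0$.

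The main obstacle is ruling out the vanishing alternative $u\equiv 0$; this is precisely what the orbit normalization is designed to prevent. If $u\equiv 0$, local compactness of the fractional Sobolev embedding away from the singular plane $\{x'=0\}$ shows that, for every $\varepsilon>0$, the contribution of $B_1\cap\{|x'|>\varepsilon\}$ to $Q_n(1)$ is negligible in the limit. The residual mass must then lie in the thin tube $B_1\cap\{|x'|<\varepsilon\}$, on which a localized Hardy-Sobolev interpolation (combining \eqref{FHM_ineq} with the fractional Sobolev inequality via Hölder) provides a bound tending to zero uniformly in $n$ as $\varepsilon\to 0$, contradicting $Q_n(1)\ge\tfrac12 S^t_k(\beta)>0$. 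Hence $u\not\equiv 0$, so $B_n\to 0$, $u_n\to u$ in the $D$-norm, $D(u)=1$, and $\int|u|^{2^*_t}/|x'|^t\,dx = S^t_k(\beta)$, exhibiting $u$ (or $|u|$, which decreases $D$ without changing the numerator) as the desired non-negative extremal in $\mcal{\dot{H}}_\beta^s(\rN)$.
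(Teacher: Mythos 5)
Your overall architecture (normalize $D(u_n)=1$, exploit the dilation and $x''$-translation invariances, Brézis--Lieb splitting plus strict superadditivity of $r\mapsto r^{2^*_t/2}$ to reduce everything to showing the weak limit is nonzero) is sound and matches the skeleton of the paper's argument. The genuine gap is in the non-vanishing step, which is exactly the delicate point. First, for $t=0$ your claim that ``local compactness of the fractional Sobolev embedding away from the singular plane'' makes the contribution of $B_1\cap\{|x'|>\varepsilon\}$ negligible is false: there $2^*_t=2^*$ is the critical exponent and $\dot H^s\hookrightarrow L^{2^*}_{loc}$ is \emph{not} compact, so a bubble concentrating at any point with $u_n\rightharpoonup 0$ retains its full $L^{2^*}$ mass. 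Second, and for all $0\le t<2s$, the ``localized Hardy--Sobolev interpolation'' on the thin tube cannot produce a bound tending to zero uniformly in $n$ as $\varepsilon\to0$: the inequality \eqref{FHSM_ineq} is invariant under the dilations $u\mapsto\lambda^{(N-2s)/2}u(\lambda\,\cdot)$ centered at a point of $\{x'=0\}$, which preserve the tube, so a bubble concentrating at some $(0,x_0'')\in B_1$ at a scale $\epsilon_n\to0$ carries a fixed positive amount of $\int|u_n|^{2^*_t}/|x'|^t$ inside $B_1\cap\{|x'|<\varepsilon\}$ for every $\varepsilon>0$ while converging weakly to $0$. Your single-scale normalization $Q_n(1)=\tfrac12 S^t_k(\beta)$ fixes only one scale and does not preclude such finer-scale concentration inside the unit ball, so the alternative $u\equiv0$ is not actually excluded by the argument you give.

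To close the gap you need one of the following: (a) the route the paper takes, namely the Morrey-refined Sobolev inequality of Palatucci--Pisante (Proposition \ref{im_sob}), which asserts $\nrm{u}_{L^{2^*}}\leq C[u]_{s,\rN}^{\theta}\nrm{u}_{\mcal{L}^{2,N-2s}}^{1-\theta}$ and hence hands you \emph{simultaneously} the correct concentration radius $R_n$ and center $x_n$ with $R_n^{-2s}\int_{B_{R_n}(x_n)}u_n^2\geq c>0$; after rescaling by $R_n$ and translating only in $x''$, the possible escape of $x_n'/R_n$ in the non-invariant $x'$-directions is handled by first Schwarz-symmetrizing the maximizing sequence in $x'$ via Theorem \ref{mas_inq}; or (b) a full profile decomposition that iterates your Brézis--Lieb splitting across all scales and centers, so that a finer-scale bubble appears as a second nontrivial profile and the strict superadditivity yields the contradiction you want. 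As written, your proof is incomplete at precisely the step where the paper invests its main technical tool.
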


\begin{thm}\label{exisb_FHSM}
The following infimum
\begin{align}\label{bst_const2}
    \kappa_k^t := \displaystyle \inf_{\substack{u\in \mcal{\dot{H}}^{s,\alpha}(\rN),\\ u\neq 0}} = \frac{\int_{\rN}\int_{\rN}\frac{|u(x)-u(y)|^2}{|x-y|^{N+2s} |x'|^\alpha|y'|^\alpha}dxdy}{\bct{\int_{\rN}\frac{|u|^{2^*_t}}{\left|x'\right|^{t+2_t^*\alpha}}dx}^{\frac{2}{2^*_t}}} 
\end{align}
 is achieved in $\mcal{\dot{H}}^{s,\alpha}(\rN)$. Here, $2^*_t = \frac{2(N-t)}{N-2s}$, $0\leq t <2s$, and $\alpha:= (k-2s)/2$ and the space $\mcal{\dot{H}}^{s,\alpha}(\rN)$ is defined in the Definition \ref{spaces}.
\end{thm}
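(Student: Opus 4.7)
\noindent\emph{Proof plan.}
The strategy is to adapt the concentration--compactness method to the weighted fractional Sobolev setting. Let $\{v_n\}\subset\mcal{\dot{H}}^{s,\alpha}(\rN)$ be a minimizing sequence for \eqref{bst_const2}, normalized so that $\int_{\rN}\md{v_n}^{2^*_t}/\md{x'}^{t+2^*_t\alpha}\,dx=1$ while the weighted Gagliardo numerator tends to $\kappa_k^t$. A direct change of variables, using $\alpha=(k-2s)/2$ together with the identity $2^*_t(N-2s)=2(N-t)$, shows the ratio in \eqref{bst_const2} is invariant under (i) $x''$-translations $v(x',x'')\mapsto v(x',x''+y_0)$, $y_0\in\rNk$, and (ii) dilations $v(x)\mapsto \lambda^a v(\lambda x)$ for any $\lambda>0$, $a\in\ro$.

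Exploiting these two symmetries, I would normalize the sequence via the L\'evy concentration function
\[
Q_n(r):=\sup_{y\in\rNk}\int_{B_r((0,y))}\frac{\md{v_n}^{2^*_t}}{\md{x'}^{t+2^*_t\alpha}}\,dx,
\]
choosing $\lambda_n$ and $y_n$ so that after reparametrization one has $Q_n(1)=\tfrac12$, attained in a ball centered near the origin. Passing to a subsequence yields a weak limit $v_n\rightharpoonup v$ in $\mcal{\dot{H}}^{s,\alpha}(\rN)$ with pointwise a.e.\ convergence. To ensure $v\not\equiv 0$, I would invoke a local weighted fractional Rellich-type compactness, giving strong convergence in the weighted $L^p_{\mathrm{loc}}$ sense on bounded sets for subcritical exponents $p<2^*_t$; the normalization then forces $\int_{B_1((0,0))}\md{v}^{2^*_t}/\md{x'}^{t+2^*_t\alpha}\,dx\geq\tfrac12>0$.

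With $v\not\equiv 0$, set $w_n:=v_n-v$. A Brezis--Lieb-type decomposition, justified by a.e.\ convergence and boundedness, splits both the Gagliardo numerator and the weighted $L^{2^*_t}$ denominator additively between $v$ and $w_n$. Writing $A:=\int\md{v}^{2^*_t}/\md{x'}^{t+2^*_t\alpha}\,dx$ and $B:=\lim_n\int\md{w_n}^{2^*_t}/\md{x'}^{t+2^*_t\alpha}\,dx$, one has $A+B=1$. Applying the defining infimum inequality to both $v$ and $w_n$ and summing gives, in the limit, $A^{2/2^*_t}+B^{2/2^*_t}\leq 1$. Since $2/2^*_t<1$, one has $X^{2/2^*_t}\geq X$ on $[0,1]$, so $A^{2/2^*_t}+B^{2/2^*_t}\geq A+B=1$, with equality only if $\{A,B\}\subset\{0,1\}$. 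The nontriviality $A>0$ therefore forces $B=0$, so $v_n\to v$ in the weighted $L^{2^*_t}$ norm, and weak lower semicontinuity of the numerator then yields that $v$ realizes $\kappa_k^t$.

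The hardest step is the nontriviality of the weak limit. Because the measure $\md{x'}^{-(t+2^*_t\alpha)}\,dx$ is singular along the $(N-k)$-plane $\{x'=0\}\times\rNk$, the classical Rellich--Kondrachov theorem cannot be applied off-the-shelf to recover strong local convergence. One would have to exploit that $2\alpha=k-2s<k$, which places $\md{x'}^{-2\alpha}$ in the Muckenhoupt class $A_2(\rk)$, and combine this with a careful cutoff argument handling the nonlocal Gagliardo seminorm near the singular axis; this is the technical core of the proof.
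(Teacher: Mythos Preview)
Your Brezis--Lieb endgame is fine, but the step where you claim nontriviality of the weak limit has a genuine gap. From the normalization $Q_n(1)=\tfrac12$ and subcritical weighted Rellich compactness you can only conclude $v_n\to v$ in $L^p_{\mathrm{loc}}$ for $p<2^*_t$; this does \emph{not} give $\int_{B_1}\md{v}^{2^*_t}/\md{x'}^{t+2^*_t\alpha}\,dx\geq\tfrac12$. Concentration at a single point $(x_0',0)$ with $0<|x_0'|<1$ is perfectly compatible with your normalization and with $v\equiv 0$: near such a point the weight is bounded above and below, so bubbling behaves exactly as in the unweighted critical Sobolev problem. The available symmetry group here---dilations about the origin and $x''$-translations---is too small to recenter and blow up at such a point without destroying the weight structure, so the usual Lions mechanism for recovering a nontrivial profile is unavailable.

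This is precisely the obstruction the paper works around, and its route is rather different from yours. Instead of relying solely on concentration--compactness, the paper first applies Ekeland's variational principle to upgrade the minimizing sequence to an approximate Palais--Smale sequence, i.e.\ one satisfying $L^s_\alpha u_m=\md{u_m}^{2^*_t-2}u_m/\md{x'}^{t+2^*_t\alpha}+f_m$ with $f_m\to 0$ in the dual. Testing this approximate equation against localized cutoffs (Lemma~\ref{lwr_ord_est} and Proposition~\ref{con_lem}) shows that if every rescaled/translated sequence converges weakly to zero, then a definite amount of $L^{2^*_t}$ mass must live in an annular region $\{\tfrac12<|x'|<1,\ |x''|<1\}$, i.e.\ away from the singular axis. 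On that region the ground state representation \eqref{grndst_rep} converts the weighted energy back to the unweighted one; the compactness Lemma~\ref{compact_lem} then kills the Hardy term, and one is left comparing $\kappa_k^t$ with the $\beta=0$ constant $\overline{S}_k^t$. The strict inequality $\kappa_k^t<\overline{S}_k^t$ (Remark~\ref{betazero}, which uses that $\overline{S}_k^t$ is actually attained) yields the contradiction. The PDE information from Ekeland is what lets the paper localize the energy and rule out point concentration; your outline has no substitute for this step.
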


We remark that, Theorem \ref{exis_FHSM} settles the issue of existence of solution of \eqref{flap_cyl}, for $0<\beta<\mcal{C}_{N,k,s}$. Whereas, using a combination of Theorem \ref{exisb_FHSM} and Theorem \ref{FHM_grndst},  we can conclude that there exists a nontrivial solution of \eqref{flap_cyl}, for the case $\beta = \mcal{C}_{N,k,s}$. To prove Theorem \ref{exis_FHSM}, we have used an improved  version of fractional Sobolev inequality  derived in Proposition \ref{im_sob} originated in \cite{PP}. On the other hand, as pointed out in Section \ref{crit_space}, the space $\mcal{\dot{H}}^s_{\mcal{C}_{N,k,s}}(\rN)$ is  much bigger than the space $\mcal{\dot{H}}^s_\beta(\rN)$, which is nothing but the usual homogeneous fractional Sobolev space $\dot{H}^s(\rN)$, in the case of $0<\beta<\mcal{C}_{N,k,s}$. Because of this fact, we could not use Proposition \ref{im_sob}. We also note that, in this case, we cannot use extension method, derived in \cite{CS}, to convert \eqref{flap_cyl} into a local one. Rather, by using Ekeland's variation principle, we were able to conclude that, the approximate solutions cannot concentrate near the singular set. However, since we are in a non local setup, so we faced a natural difficulty when we tried to cut off the approximate solutions. In this context, let us mention a paper by Ghoussoub, and Shakerian \cite{GhSh}, where they used Ekeland's variation principle to get the existence of solution of a nonlocal equation, similar to \eqref{flap_cyl}. But, their arguments were based on extension technique. For a comprehensive study for the case $k=1$, see \cite{MusNaz}.

Next, we prove some qualitative properties of solutions of \eqref{flap_cyl}  by means of following theorems.

\begin{thm}\label{reg_flap_cyl}
Let $u$ satisfies \eqref{flap_cyl} weakly. Then, for any $0<\beta \leq \mcal{C}_{N,k,s}$, there exists a unique $\ta\in (0,\frac{k-2s}{2}]$ given by \eqref{alphatilde}, such that the following holds:
\begin{align}\label{asy_est}
0\leq u(x) \leq \frac{C}{|x'|^{\tilde{\alpha}}\bct{1+\md{x}^{N-2s-2\tilde{\alpha}}}} ,  \ \forall x\in \rN_k,
\end{align}
where $C>0$ is constant, depends on $u$ but independent of $x\in \rN_k$ and \\ $\rN_k:= \bct{\rk\setminus \{0\}}\times \rNk$. Moreover, if $0<\beta<\mcal{C}_{N,k,s}$, then $u\in C^\infty(\rN_k)$.  
\end{thm}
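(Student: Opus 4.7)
The theorem has two parts, which I would handle separately: $C^\infty$ regularity on $\rN_k$ when $\beta<\mcal{C}_{N,k,s}$, and the pointwise bound \eqref{asy_est}. For the smoothness, on any ball $B_r(x_0)\subset\rN_k$ the coefficients $\beta|x'|^{-2s}$ and $|x'|^{-t}$ are $C^\infty$ and bounded and the Hardy potential is subcritical, so I would start from the energy estimate $u\in\mcal{\dot H}^s_\beta(\rN)$, run a De Giorgi--Moser iteration using the Hardy--Sobolev inequality \eqref{FHSM_ineq} (testing against truncations $u^p\eta^2$ with smooth cut-offs), and obtain $u\in L^\infty_{\mathrm{loc}}(\rN_k)$. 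Fractional Schauder estimates applied to the resulting bounded right-hand side then give $C^\infty$ regularity by a standard bootstrap.

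For the pointwise bound, the central object is the cylindrical profile $|x'|^{-\sigma}$. The explicit computation, performed distributionally on $\rN_k$, is that there is a function $\Phi_{N,k,s}$ given by Gamma-function ratios with
\begin{equation*}
(-\Delta)^s(|x'|^{-\sigma}) = \Phi_{N,k,s}(\sigma)\,|x'|^{-\sigma-2s}\quad\text{on } \rN_k.
\end{equation*}
One checks $\Phi_{N,k,s}$ is strictly increasing on $(0,(k-2s)/2]$ with maximum $\tfrac{c_{N,s}}{2}\mcal{C}_{N,k,s}$ at $(k-2s)/2$, so the equation $\Phi_{N,k,s}(\ta)=\tfrac{c_{N,s}}{2}\beta$ has a unique root $\ta\in(0,(k-2s)/2]$, which is the one referenced by \eqref{alphatilde}. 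The profile $|x'|^{-\ta}$ thus solves the linear Hardy problem $\tfrac{2}{c_{N,s}}(-\Delta)^s w=\beta|x'|^{-2s}w$ on $\rN_k$.

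Using $|x'|^{-\ta}$ as a barrier, I would next prove the near-axis bound $u(x)\leq C|x'|^{-\ta}$ on $\{|x'|\leq 1\}$ by a weak comparison argument in $\mcal{\dot H}^s_\beta(\rN)$. The idea is to regard \eqref{flap_cyl} as a linear Hardy equation with coefficient $\beta|x'|^{-2s}+u^{2^*_t-2}|x'|^{-t}$, pick a cut-off $\eta$ with $\eta\equiv1$ on $\{|x'|<1\}$ and compact support, choose $C$ large enough that $u\leq C\eta|x'|^{-\ta}$ holds on the transition annulus (where $u$ is bounded by the previous step), and test the equation difference against $(u-C\eta|x'|^{-\ta})_+$. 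The decay at infinity then follows by applying the same estimate to the Kelvin transform $v(x)=|x|^{-(N-2s)}u(x/|x|^2)$, which solves the same equation by conformal invariance of $(-\Delta)^s$ and criticality of $2^*_t$; converting the near-axis bound for $v$ back to the $x$-variable for $u$ yields exactly the factor $(1+|x|^{N-2s-2\ta})^{-1}$.

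The main technical obstacle is the weak comparison step. Because $(-\Delta)^s$ is nonlocal, a pointwise maximum principle is unavailable and the barrier $|x'|^{-\ta}$ itself does not lie in the energy space; one must use the truncated supersolution $\eta|x'|^{-\ta}$ and control the commutator $(-\Delta)^s(\eta|x'|^{-\ta})-\eta(-\Delta)^s(|x'|^{-\ta})$, showing it is absorbed by the nonlinear contribution on the region where $u$ is already bounded. A secondary technical point is the monotonicity of $\Phi_{N,k,s}$, which one can read off from its integral representation.
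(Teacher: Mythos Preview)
Your plan for $C^\infty$ regularity (local De Giorgi--Moser to get $L^\infty_{\mathrm{loc}}(\rN_k)$, then fractional Schauder bootstrap) is a legitimate intrinsic route. The paper instead uses the Caffarelli--Silvestre extension: the $s$-harmonic extension $U$ of $u$ solves a weighted local equation, and results of Jin--Li--Xiong give improved integrability, Harnack, $C^\alpha$, and then Schauder bootstrap on the extended problem. Both work; the extension route lets one quote off-the-shelf local estimates.

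For the pointwise bound \eqref{asy_est}, however, your barrier/comparison scheme has a real gap, and the paper proceeds quite differently. Your plan is to treat \eqref{flap_cyl} as a linear Hardy equation with effective potential $\beta|x'|^{-2s}+u^{2^*_t-2}|x'|^{-t}$ and compare $u$ to $C\eta|x'|^{-\ta}$. The problem is the nonlinear piece $u^{2^*_t-2}|x'|^{-t}$: near $\{x'=0\}$ you have no a~priori control on it, so after testing against $(u-C\eta|x'|^{-\ta})_+$ the term $\int |x'|^{-t}u^{2^*_t-1}(u-C\eta|x'|^{-\ta})_+$ cannot be absorbed into the coercive left-hand side without already knowing $u\lesssim|x'|^{-\ta}$---which is what you are trying to prove. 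The nonlocality compounds this: $(u-C\eta|x'|^{-\ta})_+$ is not supported near the axis (outside $\operatorname{supp}\eta$ it equals $u$), so there is no localization to a thin tube where one might hope the nonlinear mass is small. The commutator issue you flag is secondary; the primary obstruction is this circularity.

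The paper sidesteps all of this by the ground state representation: setting $U:=|x'|^{\ta}u$, Theorem \ref{FHM_grndst} shows $U\in\mcal{\dot H}^{s,\ta}(\rN)$ solves $L^s_{\ta}U=|x'|^{-t-2^*_t\ta}U^{2^*_t-1}$ weakly, where $L^s_{\ta}$ is the weighted nonlocal operator \eqref{sing_flap}. The Hardy potential is now \emph{absorbed into the kernel}, so one runs a global Moser iteration on this weighted problem (Proposition \ref{linfty_est}, using the Kato-type inequality of Lemma \ref{kato_ineq} for convex truncations together with the weighted Sobolev inequality \eqref{EFHSM_ineq}) and obtains $U\in L^\infty(\rN)$, i.e.\ $u\le C|x'|^{-\ta}$ everywhere, with no barrier and no comparison. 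The decay then follows, as you say, from Kelvin invariance of \eqref{flap_cyl}. This works uniformly for $0<\beta\le\mcal{C}_{N,k,s}$, including the critical case, where your energy-space comparison would be most delicate.
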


\begin{thm}\label{cyl_sym}
For $0<\beta \leq \mcal{C}_{N,k,s} $ and $0<s<1$, any $u\in C^{0,1}_{loc}(\rN_k)$ satisfying \eqref{flap_cyl} is cylindrically symmetric i.e. radial with respect to $x' \in \rk$ and there exist $x''_0\in \mathbb{R}^{N-k}$ such that for any fixed $x'\neq 0$, $u(x',x'') $ is radial in the second variable with respect to $x''_0.$
\end{thm}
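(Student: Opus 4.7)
The plan is to apply the method of moving planes to \eqref{flap_cyl} using the direct (non-extension) integral representation of $(-\Delta)^s$, exploiting the anti-symmetric structure of reflected differences $w := u_\lambda - u$. Symmetry is obtained in two stages: first in $x''$, where the equation is translation invariant, and then in $x'$, where one has only rotation invariance around $\{x'=0\}$ and the singular weights $|x'|^{-2s}$, $|x'|^{-t}$ have to be tracked under reflection.

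\emph{Step 1 (radiality in $x''$).} Fix $\nu\in\mathbb{R}^{N-k}$ with $|\nu|=1$, put $\Sigma_\mu = \{x : x''\cdot\nu>\mu\}$, $T_\mu = \partial\Sigma_\mu$, and let $R_\mu$ be the reflection of $\mathbb{R}^N$ that fixes $x'$ and reflects $x''$ across $T_\mu$. Setting $u_\mu(x) := u(R_\mu x)$ and $w_\mu := u_\mu - u$, since $|x'|$ is unaffected by $R_\mu$ the function $u_\mu$ solves the same equation as $u$ on $\Sigma_\mu$, and by the mean value theorem
\[
\frac{2}{c_{N,s}}(-\Delta)^s w_\mu = \bct{\frac{\beta}{|x'|^{2s}} + (2^*_t - 1)\frac{\xi_\mu^{2^*_t-2}}{|x'|^t}} w_\mu \quad\text{on } \Sigma_\mu,
\]
with $\xi_\mu$ between $u$ and $u_\mu$. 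By the decay estimate of Theorem \ref{reg_flap_cyl}, $w_\mu\geq 0$ in $\Sigma_\mu$ for $|\mu|$ large; let $\mu_0$ be the infimum of those $\mu$ for which this persists on $[\mu,\infty)$. Using $w_\mu(R_\mu x) = -w_\mu(x)$, $(-\Delta)^s w_\mu$ admits a representation as an integral over $\Sigma_\mu$ with strictly positive kernel $|x-y|^{-N-2s} - |x-R_\mu y|^{-N-2s}$, and combined with local boundedness of the coefficient on $\rN_k$ this yields a non-local strong maximum principle forcing $w_{\mu_0}\equiv 0$. Running this along two linearly independent $\nu$ produces $x_0''\in\mathbb{R}^{N-k}$ about which $u$ is radial in $x''$.

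\emph{Step 2 (radiality in $x'$).} Fix $\nu\in\mathbb{S}^{k-1}$ and $\lambda>0$; put $\Sigma_\lambda = \{x : x'\cdot\nu>\lambda\}$ and let $R_\lambda$ be the reflection of $\mathbb{R}^N$ that fixes $x''$ and reflects $x'$ across $T_\lambda\subset\mathbb{R}^k$. Set $u_\lambda(x) := u(R_\lambda x)$, $w_\lambda := u_\lambda - u$, and write ${x'}^\lambda$ for the $x'$-component of $R_\lambda x$. The key geometric identity $|x'|^2 - |{x'}^\lambda|^2 = 4\lambda(x'\cdot\nu-\lambda)>0$ on $\Sigma_\lambda$ gives $|{x'}^\lambda|\leq|x'|$ there. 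Splitting
\[
\frac{u_\lambda(x)}{|{x'}^\lambda|^\gamma} - \frac{u(x)}{|x'|^\gamma} = \frac{w_\lambda(x)}{|{x'}^\lambda|^\gamma} + u(x)\bct{\frac{1}{|{x'}^\lambda|^\gamma} - \frac{1}{|x'|^\gamma}}, \quad \gamma\in\{2s,t\},
\]
and subtracting the equations for $u_\lambda$ and $u$ produces $\tfrac{2}{c_{N,s}}(-\Delta)^s w_\lambda - c_\lambda(x) w_\lambda = g_\lambda(x) \geq 0$ on $\Sigma_\lambda$, with $c_\lambda$ bounded on compact subsets of $\rN_k\cap\overline{\Sigma_\lambda}$. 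Theorem \ref{reg_flap_cyl} again gives $w_\lambda\geq 0$ for $\lambda$ large; the maximum principle of Step 1 lets us slide $\lambda$ down to some $\lambda_0\geq 0$. If $\lambda_0>0$, the strong maximum principle forces $w_{\lambda_0}\equiv 0$, i.e. $u$ is invariant under $R_{\lambda_0}$; but then $R_{\lambda_0}(\{x'=0\}) = \{x'=2\lambda_0\nu\}$ would be a second singular locus of $u$, contradicting $u\in C^{0,1}_{loc}(\rN_k)$. Hence $\lambda_0=0$; running the same argument with $-\nu$ gives equality $u(R_0 x) = u(x)$, and as $\nu\in\mathbb{S}^{k-1}$ is arbitrary, $u$ is radial in $x'$.

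\emph{Main obstacle.} The decisive difficulty is Step 2: the moving plane is forced to stop at $\lambda_0 = 0$, where $T_0$ contains the singular set $\{x'=0\}$. Consequently (i) the non-local strong maximum principle must be formulated on $\Sigma_\lambda\cap\rN_k$, with the tails of $(-\Delta)^s w_\lambda$ controlled via the sharp decay of Theorem \ref{reg_flap_cyl}, and (ii) ruling out $\lambda_0>0$ is a \emph{geometric} argument based on the regularity $u\in C^{0,1}_{loc}(\rN_k)$, not a purely differential one. The singular Hardy and Hardy--Sobolev terms never obstruct the slide, because the weight comparison $|{x'}^\lambda|\leq|x'|$ on $\Sigma_\lambda$ packages them into the favourable non-negative inhomogeneity $g_\lambda$.
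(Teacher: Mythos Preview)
Your outline has the right architecture (moving planes, antisymmetric strong maximum principle, two stages), but it skips the step the paper identifies as the real obstruction, and it misidentifies the ``main obstacle''.

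The missing ingredient is the \emph{narrow-region/small-measure estimate} that starts the plane and lets it continue. You assert ``by the decay estimate of Theorem~\ref{reg_flap_cyl}, $w_\mu\ge0$ in $\Sigma_\mu$ for $|\mu|$ large'', but decay alone does not give a sign for $w_\mu=u_\mu-u$. What is needed is a variational inequality of the type: if $w_\mu$ changes sign, then $\int_{P_\mu}|u|^{2^*}\ge C>0$ (cf.~\eqref{cyl_sym8}). To derive this one tests the equation with the odd extension of $(u-u_\mu)^+$, and the whole point of Section~\ref{symm} is that it is \emph{not clear} this test function lies in $\mcal{\dot H}^s_\beta(\rN)$ (the paper discusses this explicitly in the introduction: the half-space Hardy inequality fails at $s=1/2$, and for $\beta=\mcal{C}_{N,k,s}$ the relevant space is strictly larger than $\dot H^s$). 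The paper resolves this by an approximation $\Phi=\phi_{\epsilon,h,\lambda}^2 v_\lambda$ with carefully chosen cut-offs and then proves the error terms vanish using the sharp bound \eqref{asy_est}. Your proposal contains no analogue of this step.

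Relatedly, in your Step~1 (the $x''$ direction) the linear equation for $w_\mu$ carries the singular coefficient $\beta|x'|^{-2s}+(2^*_t-1)\xi_\mu^{2^*_t-2}|x'|^{-t}$, and the singular set $\{x'=0\}$ sits inside every half-space $\Sigma_\mu$. Saying the coefficient is ``locally bounded on $\rN_k$'' is true but does not yield a usable narrow-region principle, because the region where you need it is not compactly contained in $\rN_k$. The paper handles exactly this by passing to the ground-state variable $U=|x'|^{\tilde\alpha}u$ and the operator $L^s_{\tilde\alpha}$ (so that $U\in L^\infty$ by Proposition~\ref{linfty_est}), together with the logarithmic cut-off of Lemma~\ref{log_cutoff}. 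Finally, your exclusion of $\lambda_0>0$ in Step~2 presumes that $u$ is actually singular on $\{x'=0\}$; Theorem~\ref{reg_flap_cyl} only gives an upper bound, so this needs an independent argument (the paper instead derives a contradiction directly from the equation: $w_{\bar\lambda}\equiv0$ would force $(-\Delta)^s w_{\bar\lambda}=0$, while the weight comparison makes it strictly positive on $\Omega_{\bar\lambda}$).
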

We have used moving plane method to prove Theorem \ref{cyl_sym}. Along with other hurdles, applying moving plane method in the non-local setup is inconvenient due to inadequacy of any direct small measure type lemma which was observed, in the local case, by Varadhan and successfully disseminated by Nirenberg and Berestycki \cite{NB}. As observed in \cite{SanManFab}, to prove such small measure type lemma, we could try to use the test function $w_\lambda =(u-u_\lambda)^+$, where $u_\lambda$ denotes the usual reflexion of $u$ along the hyperplane $T_\lambda= \{x_{i}= \lambda\}$. But since we are in non local set up, as noticed in \cite{DMPB}, \cite{FelWan}, the right test function should be an odd reflexion of $w_\lambda$ along $T_\lambda$. However, in our set up we faced difficulty in showing $w_\lambda$ belongs to right space simply because of the following reason. When $0<\beta<\mcal{C}_{N,k,s}$, to show the odd extension of $w_\lambda$ belongs to $\dot{H}^s(\rN)$, we need to show that
\begin{align*} 
\int_{\{x_i<\lambda\}} \frac{w_\lambda^2(x)}{(\lambda-x_i)^{2s}}dx <\infty,
\end{align*}
which follows by a fractional Hardy inequality proved in \cite{BogDyd}, when $s\neq \frac{1}{2}$ and $0<s<1$. But, when $s=\frac{1}{2}$, the best constant of the fractional Hardy inequality is zero. On the other hand, when $\beta = \mcal{C}_{N,k,s}$, due to unavailability of right Hardy type inequality, the situation becomes more complex and it is not clear whether we can use the odd reflexion of $w_\lambda$ as a test function or not. We have avoided this difficulty by approximating $w_\lambda$ properly and using the precise bound on $u$, derived in Theorem \ref{reg_flap_cyl}. 
 
 Finally, let us describe the plan of this article. In Section \ref{Prem}, we 
 will introduce the notations and all the function spaces used in this article. We will also recall some of the known results used in the proofs. Section \ref{frac_ineq} will be devoted to the proofs of inequality \eqref{FHM_ineq} and \eqref{FHSM_ineq}. Section \ref{exis} contains the proofs of Theorem \ref{exis_FHSM} and \ref{exisb_FHSM}. Also, Section \ref{reg} and \ref{symm} contains the proofs of Theorem \ref{reg_flap_cyl} and \ref{cyl_sym} respectively. Finally, in the Appendix we have proved the Lemma \ref{rep_sob_sp}.
\section{Notations and Preliminaries}\label{Prem}

\textbf{Notations:} 
 We will denote the projection of a point $x\in \rN$ to $\rk$ and $\rNk$ by $x'$ and $x''$ respectively. \\ 
For any $l \in \mathbb{N}$ and any $z\in \mathbb{R}^l$ we denote the $l$ dimensional ball of radius $R>0$ centered at $z$ by $B^l_R(z).$\\
For $2\leq k \leq N-2$, $\rN_k$ stands for the set  $\bct{\rk\setminus\{0\}} \times \rNk$.\\ To avoid confusion, we clarify that  $2^* := \frac{2N}{N-2s}$ and $\alpha := \frac{k-2s}{2}$. \\

\subsection{Definitions and Different Notions Of Solution}
\par In this section we will define different function spaces to be used. We  will also define different notions of solution. 
\begin{defi}\label{spaces}

\begin{itemize}
\noindent
\item[(i)] We define $\mcal{\dot{H}}_{\beta}^{s}(\rN)$ as the completion of $C_c^{0,1} \left(\rk \setminus \{0\} \times \mathbb{R}^{N-k}\right)$ under the following norm
\begin{align*}
\left[u\right]^2_{s,\beta,\rN} : = \int_{\rN}\int_{\rN}\frac{|u(x)-u(y)|^2}{|x-y|^{N+2s}} dx dy - \beta \int_{\rN} \frac{u^2(x)}{|x'|^{2s}}dx. 
\end{align*}

Here $0\leq \beta \leq \mcal{C}_{N,k,s}$ and $\mcal{C}_{N,k,s}$ is the best constant of the Fractional Hardy-Sobolev-Maz'ya inequality.

\item[(ii)] We also define the space $\mcal{\dot{H}}^{s,\tilde{\alpha}}(\rN)$ as the completion of $C_c^{0,1} \left(\rk \setminus \{0\} \times \mathbb{R}^{N-k}\right)$ under the following norm

\begin{align*}
\left[\left[u\right]\right]^2_{s,\tilde{\alpha},\rN} :=  \int_{\rN}\int_{\rN} \frac{|u(x)-u(y)|^2}{|x-y|^{N+2s} \left|x'\right|^{\tilde{\alpha}} \left|y'\right|^{\tilde{\alpha}}} dx dy,
\end{align*}
where $0<\tilde{\alpha}\leq \alpha:=(k-2s)/2$.
\item[(iii)]We recall, $\dot{H}^s(\rN)$  is the completion of $C_c^\infty(\rN)$ under the following norm 

\begin{align*}
\left[u\right]^2_{s,\rN} := \int_{\rN}\int_{\rN}\frac{\md{u(x)-u(y)}^2}{\md{x-y}^{N+2s}}dx dy. 
\end{align*}
One can easily prove that the following characterization of $\dot{H}^s(\rN).$ See \cite{DV} 
\begin{align*}
\dot{H}^s(\rN) := \{u\in L^{2^*}(\rN): [u]_{s,\rN} <\infty \}. 
\end{align*}

\item[(iv)] For any domain $\Om \subset \mathbb{R}^{N+1}_+$ we recall 
\begin{align*}
H^1(\Om, 1-2s) := \{U\in L^2(\Om,1-2s): \md{\nabla U}\in L^2(\Om,1-2s)\},
\end{align*}
where, $L^2(\Om,1-2s) : = \{U:\Om \rightarrow \ro \ \text{measurable }| \int_{\Om}t^{1-2s}U^2(x,t) dxdt < \infty \}$
\item[(v)] We recall, $ L_s(\rN): = \left\{f:\rN\rightarrow \ro\text{ measurable }|\int_{\rN}\frac{|f|}{1+|x|^{N+2s}}dx <\infty\right\}$
\end{itemize}

\end{defi}

Finally, let us state the following lemma regarding the precise representation of $\mcal{\dot{H}}^{s,\tilde{\alpha}}(\rN)$.

\begin{lem}\label{rep_sob_sp}
For $0<s<1$, $k\geq 2$ and $0<\tilde{\alpha}\leq \alpha$, we have the following representation of $\mcal{\dot{H}}^{s,\tilde{\alpha}}(\rN)$.  
\begin{align*}
\mcal{\dot{H}}^{s,\tilde{\alpha}}(\rN) = \{u\in L^{2^*}\left(\rN;\frac{1}{|x'|^{\tilde{\alpha}2^*}}\right) : \int_{\rN}\int_{\rN}\frac{\md{u(x)-u(y)}^2 dxdy}{\md{x-y}^{N+2s}|x'|^{\tilde{\alpha}}|y'|^{\tilde{\alpha}}}< \infty \},
\end{align*}
where $L^{2^*}\left(\rN;\frac{1}{|x'|^{\tilde{\alpha}2^*}}\right)$ consists of all measurable function $f$ such that $\frac{f}{|x'|^{\tilde{\alpha}} }\in L^{2^*}(\rN)$.
\end{lem}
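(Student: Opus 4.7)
The plan is to exploit the ground-state transformation $u\mapsto w:=u/|x'|^{\tilde{\alpha}}$ in order to reduce Lemma \ref{rep_sob_sp} to the standard characterization of a fractional Sobolev space with Hardy-type potential. Since $0<\tilde{\alpha}\leq \alpha=(k-2s)/2$, the function $\phi(x):=|x'|^{-\tilde{\alpha}}$ satisfies a pointwise identity
\begin{align*}
(-\Delta)^s\phi(x)=c(\tilde{\alpha})\,|x'|^{-\tilde{\alpha}-2s},\qquad x\in \rN_k,
\end{align*}
for an explicit constant $c(\tilde{\alpha})\in(0,\mcal{C}_{N,k,s}]$ with equality $c(\alpha)=\mcal{C}_{N,k,s}$ at the Hardy threshold. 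Combining this with the usual algebraic manipulation underlying every ground-state substitution for the fractional Laplacian yields the identity
\begin{align*}
[[u]]^2_{s,\tilde{\alpha},\rN}=[w]^2_{s,\rN}-c(\tilde{\alpha})\int_{\rN}\frac{w^2}{|x'|^{2s}}\,dx,\qquad w=u/|x'|^{\tilde{\alpha}},
\end{align*}
for every $u\in C_c^{0,1}(\rN_k)$. Applying the fractional Hardy-Sobolev-Maz'ya inequality \eqref{FHSM_ineq} with $t=0$ and $\beta=c(\tilde{\alpha})$ then gives the weighted Sobolev embedding
\begin{align*}
\bct{\int_{\rN}\frac{|u|^{2^*}}{|x'|^{\tilde{\alpha}\,2^*}}\,dx}^{2/2^*}\leq C\,[[u]]^2_{s,\tilde{\alpha},\rN},\qquad u\in C_c^{0,1}(\rN_k).
\end{align*}

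By density this embedding extends to a continuous injection of the abstract completion $\mcal{\dot{H}}^{s,\tilde{\alpha}}(\rN)$ into $L^{2^*}(\rN;|x'|^{-\tilde{\alpha}2^*})$, and Fatou's lemma applied to the double integral on Cauchy sequences then shows every element of the completion has finite $[[\cdot]]_{s,\tilde{\alpha},\rN}$-seminorm. This gives the inclusion ``$\subseteq$''. For the converse, I would take $u$ on the right-hand side of the claimed representation and approximate it in $[[\cdot]]_{s,\tilde{\alpha},\rN}$-norm by functions in $C_c^{0,1}(\rN_k)$ via three standard steps: (i) spatial truncation by a smooth radial cutoff $\eta_R(x)=\eta(x/R)$, (ii) excision of a tubular neighborhood of the singular set by a cutoff $\chi_\epsilon(x)$ vanishing on $\{|x'|<\epsilon\}$, and (iii) mollification. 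Steps (i) and (iii) reduce to dominated convergence and continuity of translations applied to the weighted double integral.

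The main obstacle is step (ii). Expanding the increment gives
\begin{align*}
[[u(1-\chi_\epsilon)]]^2_{s,\tilde{\alpha},\rN} &\leq 2\iint\frac{|u(x)-u(y)|^2\,(1-\chi_\epsilon(x))^2}{|x-y|^{N+2s}|x'|^{\tilde{\alpha}}|y'|^{\tilde{\alpha}}}\,dxdy \\
&\quad +2\iint\frac{|u(y)|^2\,|\chi_\epsilon(x)-\chi_\epsilon(y)|^2}{|x-y|^{N+2s}|x'|^{\tilde{\alpha}}|y'|^{\tilde{\alpha}}}\,dxdy,
\end{align*}
and the first integral vanishes by dominated convergence using the finite $[[u]]^2_{s,\tilde{\alpha},\rN}$-density. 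The second piece is delicate; I would split the integration according to $|x-y|\leq \epsilon$ versus $|x-y|>\epsilon$, applying the Lipschitz bound $|\chi_\epsilon(x)-\chi_\epsilon(y)|\leq C|x-y|/\epsilon$ in the close regime and the indicator bound $|\chi_\epsilon(x)-\chi_\epsilon(y)|\leq 2\,\mathbf{1}_{\{|x'|<\epsilon\}\cup\{|y'|<\epsilon\}}$ in the far regime. Each resulting piece is then controlled by Hölder's inequality and the weighted integrability $u\in L^{2^*}(\rN;|x'|^{-\tilde{\alpha}2^*})$, producing an $\epsilon$-capacity-type bound for $\{|x'|<\epsilon\}$ that vanishes as $\epsilon\to 0$. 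Verifying this capacity vanishing, which crucially uses $k\geq 2>2s$, is the technical core of the lemma.
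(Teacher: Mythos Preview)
Your overall strategy is reasonable, and the reduction of the ``$\subseteq$'' direction to the embedding \eqref{EFHSM_ineq} via the ground-state identity is fine. The gap is in step (ii), the excision of $\{|x'|<\epsilon\}$ by a standard Lipschitz cutoff $\chi_\epsilon$ with $|\nabla\chi_\epsilon|\lesssim 1/\epsilon$. A scaling check already shows this cannot work at the endpoint $\tilde{\alpha}=\alpha=(k-2s)/2$: substituting $x'=\epsilon\xi$, $y'=\epsilon\eta$ in the model quantity
\[
\int_{\rk}\int_{\rk}\frac{|\chi_\epsilon(x')-\chi_\epsilon(y')|^2}{|x'-y'|^{k+2s}|x'|^{\tilde{\alpha}}|y'|^{\tilde{\alpha}}}\,dx'dy'
\]
produces the factor $\epsilon^{k-2s-2\tilde{\alpha}}$, which equals $1$ when $\tilde{\alpha}=\alpha$. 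So the ``$\epsilon$-capacity'' you intend to invoke does \emph{not} vanish at the critical exponent, and the condition $k\geq 2>2s$ alone is not what saves it. The paper resolves exactly this obstruction by replacing the Lipschitz cutoff with a logarithmic one,
\[
\eta_\epsilon(x')=\frac{\ln(|x'|/\epsilon^2)}{|\ln\epsilon|}\quad\text{on }\epsilon^2\le|x'|\le\epsilon,
\]
which furnishes an extra factor $|\ln\epsilon|^{-2}$; see Lemma~\ref{log_cutoff}. With that cutoff the relevant integrals are $O(|\ln\epsilon|^{-1})$ even at $\tilde{\alpha}=\alpha$.

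A secondary point: your claim that step (iii) ``reduces to continuity of translations'' is too quick, because the weight $|x'|^{-\tilde{\alpha}}$ is not translation invariant and the double integral runs over all of $\rN\times\rN$, not just a neighbourhood of the support. The paper handles mollification \emph{before} excision, for general $u\in\mcal{W}$, by exploiting that the weights in \eqref{weights} satisfy a Muckenhoupt $A_1$-type bound (Proposition~\ref{a1property}) and the associated maximal-function estimate (Lemma~\ref{max_op}, Proposition~\ref{conv_contrl}); this gives density of $C_c^\infty(\rN)$ in $\mcal{W}$ directly, after which the logarithmic cutoff is applied to a smooth compactly supported function, where $L^\infty$ bounds are available. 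If you keep your order (excise first, then mollify), step (iii) does become essentially standard once the function is compactly supported in $\rN_k$, but you must first repair step (ii) with the logarithmic cutoff and carry out the estimates for a function that is merely in $\mcal{W}$ rather than $C_c^\infty$.
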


We have proved the lemma in the Appendix. We recall, for $f$ belonging to the Schwartz class the fractional laplacian can be defined by the following integral representation. See \cite{NPV}
\begin{align*}
\bct{-\Delta}^s f (x):= c_{N,s}P.V. \int_{\rN}\frac{f(x)-f(y)}{|x-y|^{N+2s}} dy, \text{ for } x\in \rN,
\end{align*}
where $c_{N,s}= 2^{2s}\pi^{-N/2}\frac{\Gamma\bct{\frac{N+2s}{2}}}{|\Gamma(-s)|}$. Next, we need the following two definitions.
\begin{defi}\label{wk_sol}
For $0<\beta \leq \mcal{C}_{N,k,s}$, we say that, $u\in \mcal{\dot{H}}^s_\beta(\rN)$ is a weak solution of  \eqref{flap_cyl} if for every $\psi \in \mcal{\dot{H}}^s_\beta(\rN)$, we have 
\begin{align*}
\int_{\rN}\int_{\rN} \frac{\bct{u(x)-u(y)}\bct{\psi(x)-\psi(y)}}{|x-y|^{N+2s}} dxdy - \beta \int_{\rN} \frac{u(x)\psi(x)}{|x'|^{2s}}dx = \int_{\rN}\frac{u^{2^*_t-1}(x)\psi(x)}{|x'|^t}dx
\end{align*}
\end{defi}

\begin{defi}
Let $\Om\subset \rN$ be any open set and $\mcal{D}'(\Om)$ denotes the space of all distributions over $\Om$. Assume that, $u\in L_s(\rN)$ and $f\in D'(\Om)$. Then we say 
\begin{align*}
\bct{-\Delta}^s u (\geq) = (\leq) f , \text{ in the distributional sense}, 
\end{align*}
if for any nonnegative $\phi \in C_c^{\infty}(\Om)$ we have
\begin{align*}
\int_{\rN} u(x)\bct{-\Delta}^s\phi(x)dx  (\geq) = (\leq ) \langle f,\phi \rangle_\Om,
\end{align*}
 $\langle f,\phi \rangle_\Om$ denotes the action of $f$ on $\phi$.
\end{defi}

\subsection{Some Known Results}
In this section, we will recall some known results.\\

\par \textbf{Master Inequality:} We recall the following integral version of P{\'o}lya-Szeg{\"o} inequality. See \cite{BA2}.
\begin{thm}\label{mas_inq}
Let $f,g \in C(\ro^{N})$ be non negative vanishing at infinity i.e. they satisfy, $\md{\{f>t\}}_N<\infty,$ $\forall t > \operatorname{ess} \inf f$ and $\md{\{g>t\}}_N<\infty,$ $\forall t > \operatorname{ess} \inf g$, where for any measurable subset $A$ of $\rN$, $\md{A}_N$ denotes the $N$-dimensional Lebesgue measure of $A.$ Then, for any fixed  $\phi: \ro^+\rightarrow \ro^+$ increasing, convex and $K: \ro^+\rightarrow \ro^+$ decreasing, we have the following inequality
\begin{align}\label{fpol_sze}
\int_{\rN}\int_{\rN} \phi\bct{\md{f^*(x)-g^*(y)}}K\bct{|x-y|} dx dy  \leq \int_{\rN}\int_{\rN} \phi\bct{\md{f(x)-g(y)}}K\bct{|x-y|} dx dy,
\end{align}
where $f^*$ and $g^*$ denotes the Schwarz symmetrization of $f,g$ respectively.
\end{thm}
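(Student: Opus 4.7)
The plan is to reduce the two-function inequality \eqref{fpol_sze} to the classical Riesz rearrangement inequality via a layer-cake decomposition of the integrand $\phi(|a-b|)$. As a first step I would assume $\phi\in C^{2}([0,\infty))$ with $\phi(0)=\phi'(0)=0$ (so $\phi''\ge 0$); the general case is recovered by approximation (see below). The key pointwise identity to exploit is
\[
\phi(|a-b|)=\phi(a)+\phi(b)-\int_{0}^{\infty}\!\int_{0}^{\infty}\phi''(|t-s|)\,\mathbf{1}_{\{a>t\}}\mathbf{1}_{\{b>s\}}\,dt\,ds, \qquad (a,b\ge 0),
\]
which can be verified directly: the RHS double integral equals $\int_{0}^{a}\int_{0}^{b}\phi''(|t-s|)\,dt\,ds$, and splitting the domain along $\{t=s\}$ and integrating twice using $\phi(0)=\phi'(0)=0$ gives $\phi(a)+\phi(b)-\phi(|a-b|)$.

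Setting $a=f(x),\ b=g(y)$ and integrating against $K(|x-y|)\,dx\,dy$, Fubini yields
\begin{align*}
\int_{\rN}\int_{\rN}\phi(|f(x)-g(y)|)K(|x-y|)\,dx\,dy
 &= \int_{\rN}\int_{\rN}\phi(f(x))K(|x-y|)\,dx\,dy + \int_{\rN}\int_{\rN}\phi(g(y))K(|x-y|)\,dx\,dy \\
 &\quad - \int_{0}^{\infty}\int_{0}^{\infty}\phi''(|t-s|)\,\mcal{I}(t,s)\,dt\,ds,
\end{align*}
where $\mcal{I}(t,s):=\int_{\rN}\int_{\rN}\mathbf{1}_{\{f>t\}}(x)\mathbf{1}_{\{g>s\}}(y)K(|x-y|)\,dx\,dy$. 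The first two ``diagonal'' terms depend only on the distribution functions of $f$ and $g$ (via Fubini and Cavalieri's principle), hence are unchanged when $(f,g)$ is replaced by $(f^{\ast},g^{\ast})$. For $\mcal{I}(t,s)$, the three-function Riesz rearrangement inequality applied to $\mathbf{1}_{\{f>t\}}$, $\mathbf{1}_{\{g>s\}}$, and the radially symmetric decreasing function $K(|\cdot|)$ (already its own rearrangement) gives
\[
\mcal{I}(t,s)\le \int_{\rN}\int_{\rN}\mathbf{1}_{\{f^{\ast}>t\}}(x)\,\mathbf{1}_{\{g^{\ast}>s\}}(y)\,K(|x-y|)\,dx\,dy.
\]
Since $\phi''\ge 0$ and this cross term enters the decomposition with a negative sign, the overall inequality reverses and produces exactly \eqref{fpol_sze}.

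To remove the normalization $\phi(0)=\phi'(0)=0$, I observe that adding a constant to $\phi$ contributes the same quantity to both sides, while the linear piece $\phi'_+(0)|a-b|$ is handled separately using $|a-b|=a+b-2\min(a,b)$ together with $\min(a,b)=\int_{0}^{\infty}\mathbf{1}_{\{a>t\}}\mathbf{1}_{\{b>t\}}\,dt$, which again reduces to Riesz via the super-level indicators. A general convex increasing $\phi:\mathbb{R}^{+}\to\mathbb{R}^{+}$ is then approximated from below by a monotone sequence of smooth convex $\phi_n$ with $\phi_n(0)=\phi_n'(0)=0$ (e.g.\ via mollification combined with subtraction of the tangent at $0$ and truncation), and monotone convergence transfers the inequality to $\phi$.

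The main obstacle will be the integrability bookkeeping, because $K$ is merely assumed decreasing and need not lie in $L^{1}(\rN)$ (in the paper's applications $K(r)\sim r^{-N-2s}$); consequently the single-index integrals such as $\int\int\phi(f(x))K(|x-y|)\,dx\,dy$ can be infinite, rendering the cancellations above purely formal. I would resolve this by truncating $K$ to $K_R(z):=K(|z|)\mathbf{1}_{\{|z|\le R\}}$, running the argument with the bounded, integrable kernel $K_R$ (for which every integral is finite and every application of Fubini is legitimate), and then passing to the limit $R\to\infty$ on both sides by monotone convergence, using that $f^{\ast}$ and $g^{\ast}$ are fixed under this limit. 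Combined with the $\phi_n\nearrow\phi$ approximation, this reduces the entire argument to the smooth, truncated case where every step is rigorous.
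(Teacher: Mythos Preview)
The paper does not supply its own proof of this statement; it is simply recalled from Baernstein's survey \cite{BA2}, so there is no in-paper argument to compare against. Your route---reducing \eqref{fpol_sze} to the three-function Riesz rearrangement inequality through the layer-cake identity $\phi(|a-b|)=\phi(a)+\phi(b)-\int_0^a\!\int_0^b\phi''(|t-s|)\,dt\,ds$---is a standard and correct strategy, and the algebraic core of your computation is right. Baernstein's own argument in \cite{BA2} proceeds instead by polarization (two-point rearrangement), which sidesteps the layer-cake bookkeeping altogether; your approach is more hands-on but equally valid in principle.

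That said, the integrability bookkeeping you flag as ``the main obstacle'' is not yet resolved by your truncation. Two points: (i) the kernel $K_R:=K\mathbf{1}_{\{|\cdot|\le R\}}$ is in general neither bounded nor integrable, since $K$ may blow up at the origin (indeed $K(r)=r^{-N-2s}$ in the paper's application); you need a two-sided cutoff $K_{\varepsilon,R}:=K\mathbf{1}_{\{\varepsilon\le|\cdot|\le R\}}$. (ii) Even with a bounded, compactly supported kernel, the diagonal terms $\int\!\int\phi(f(x))K(|x-y|)\,dx\,dy=\|K\|_{L^1}\int_{\rN}\phi(f)$ can still be $+\infty$, because the hypotheses on $f,g$ (continuous, nonnegative, vanishing at infinity) do not force $\phi(f)\in L^1(\rN)$; so your claim that ``every integral is finite'' after truncating $K$ alone is false. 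The clean fix is to additionally replace $f,g$ by the truncations $f_M:=\min(f,M)\chi_{B_M^N(0)}$ (and likewise for $g$), run your argument in that fully finite setting, and then send $M\to\infty$, $\varepsilon\to 0$, $R\to\infty$ using monotonicity of Schwarz symmetrization together with Fatou on the left-hand side. With those adjustments your proof goes through.
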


We need the following improved version of Sobolev inequality.
\begin{prop}\label{im_sob}
Let $0\leq t<2s$ and $u\in \dot{H}(\rN).$ Then, there exist constants $C,\theta_1,\theta_2>0$ independent of $u$ such that 
 \begin{align}\label{im_sob2}
\int_{\rN} \frac{\md{u}^{2_t^*}}{|x'|^t} dx \leq C \left[u\right]_{s,\rN}^{\theta_1} \nrm{u}_{\mcal{L}^{2,N-2s}}^{\theta_2},
\end{align}
where 
\begin{align*}
\nrm{u}^2_{\mcal{L}^{2,N-2s}} := \displaystyle\sup_{R>0,x\in \rN} \frac{R^{N-2s}}{\md{B^N_R(x)}}\int_{B^N_R(x)}\md{u}^2 dy.
\end{align*}
\end{prop}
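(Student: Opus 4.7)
The plan is to reduce the weighted improved Sobolev inequality to two already-known inputs via a single H\"{o}lder interpolation. The two inputs are (a) the Palatucci--Pisante improved fractional Sobolev inequality~\cite{PP} in its standard unweighted form,
$$\int_{\rN}|u|^{2^{*}}\,dx \,\leq\, C\,[u]_{s,\rN}^{2}\,\nrm{u}_{\mcal{L}^{2,N-2s}}^{2^{*}-2},$$
and (b) the fractional Hardy inequality (the $q=2$ content of \eqref{FHM_ineq} from Section~\ref{frac_ineq}),
$$\int_{\rN} \frac{|u|^{2}}{|x'|^{2s}}\,dx \,\leq\, \mcal{C}_{N,k,s}^{-1}\,[u]_{s,\rN}^{2}.$$

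The key observation is purely algebraic. Set $\theta := 1 - t/(2s)$; then $\theta\in(0,1]$ under the hypothesis $0\leq t<2s$, and a direct check gives the two identities $2s(1-\theta)=t$ and $\theta\cdot 2^{*} + (1-\theta)\cdot 2 = 2_{t}^{*}$. These allow the weighted integrand to be factored as
$$\frac{|u|^{2_{t}^{*}}}{|x'|^{t}} \,=\, \bigl(|u|^{2^{*}}\bigr)^{\theta}\cdot\left(\frac{|u|^{2}}{|x'|^{2s}}\right)^{1-\theta}.$$
Applying H\"{o}lder with conjugate exponents $1/\theta$ and $1/(1-\theta)$ and then plugging in the two input estimates gives
$$\int_{\rN}\frac{|u|^{2_{t}^{*}}}{|x'|^{t}}\,dx \,\leq\, C\bigl([u]_{s,\rN}^{2}\,\nrm{u}_{\mcal{L}^{2,N-2s}}^{2^{*}-2}\bigr)^{\theta}\bigl([u]_{s,\rN}^{2}\bigr)^{1-\theta} \,=\, C\,[u]_{s,\rN}^{2}\,\nrm{u}_{\mcal{L}^{2,N-2s}}^{2_{t}^{*}-2},$$
using the identity $(2^{*}-2)\theta = 2_{t}^{*}-2 = 2(2s-t)/(N-2s)$. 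This establishes the claim with $\theta_{1}=2$ and $\theta_{2}=2(2s-t)/(N-2s)>0$.

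The endpoint $t=0$ degenerates to $\theta=1$ and simply recovers the Palatucci--Pisante inequality; the opposite endpoint $t=2s$ would force $\theta\to 0$ and is excluded from the hypothesis, so for any strict $t\in(0,2s)$ the H\"{o}lder exponents are finite and the argument is routine. The main obstacle is therefore externalized: everything rests on the Palatucci--Pisante inequality being available as a black box. A genuinely self-contained proof would need to adapt its Littlewood--Paley / heat-semigroup argument to the weighted measure $|x'|^{-t}\,dx$ directly, balancing a frequency cutoff $\tau(\lambda)$ against the Morrey-to-Besov embedding $\nrm{e^{\tau\Delta}u}_{L^{\infty}}\leq C\,\tau^{-(N-2s)/4}\nrm{u}_{\mcal{L}^{2,N-2s}}$ inside a layer-cake decomposition $\int |u|^{2_{t}^{*}}|x'|^{-t}\,dx = 2_{t}^{*}\int_{0}^{\infty}\lambda^{2_{t}^{*}-1}\mu(\{|u|>\lambda\})\,d\lambda$ (with $d\mu=|x'|^{-t}\,dx$); the H\"{o}lder route above sidesteps that machinery entirely.
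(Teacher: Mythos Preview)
Your argument is correct. Both your proof and the paper's use a H\"older interpolation together with the Palatucci--Pisante inequality~\eqref{im_sob1}, but the interpolations differ. The paper splits
\[
\int_{\rN}\frac{|u|^{2^*_t}}{|x'|^t}\,dx \leq \Bigl(\int_{\rN}|u|^{2^*}\Bigr)^{\frac{2^*_t-2}{2^*}}\Bigl(\int_{\rN}\frac{|u|^{2^*_\xi}}{|x'|^{\xi}}\Bigr)^{\frac{2^*-2^*_t+2}{2^*}},\qquad \xi=\frac{2^*t}{2^*-2^*_t+2},
\]
and controls the second factor via the full Hardy--Sobolev--Maz'ya inequality~\eqref{FHSM_ineq} (with $\beta=0$), while you interpolate between $|u|^{2^*}$ and $|u|^2/|x'|^{2s}$ and only need the plain Hardy--Maz'ya inequality~\eqref{FHM_ineq}. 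Your route is therefore more elementary---it avoids invoking~\eqref{FHSM_ineq} for an auxiliary exponent $\xi$---and it yields the scale-invariant exponents $\theta_1=2$, $\theta_2=2^*_t-2$ directly, corresponding to the endpoint $\theta=2/2^*$ in~\eqref{im_sob1}. The paper's version, by contrast, carries a free parameter $\theta\in[2/2^*,1)$ from Palatucci--Pisante into $\theta_1,\theta_2$, which is slightly more flexible but not needed for the applications in Section~\ref{exis}.
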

\begin{proof}
We divide the proof in the following two cases.
\par \textit{Case 1: } $t=0$ In this case, \eqref{im_sob2} was proved by Palatucci and Pisante in \cite{PP}. Their inequality states as follows: there exists a constant $C>0$ such that for any $u \in \dot{H}^s(\rN)$
\begin{align}\label{im_sob1}
\nrm{u}_{L^{2^*}(\rN)} \leq C \left[u\right]_{s,\rN}^\theta \nrm{u}^{1-\theta}_{\mcal{L}^{2,N-2s}}
\end{align}
where $\frac{2}{2^*} \leq \theta <1.$
\par \textit{Case 2: } $0<t<2s.$ Using H\"older inequality we get 
\begin{align}\label{im_sob3}
\int_{\rN} \frac{\md{u}^{2_t^*}}{\md{x'}^t} dx\leq \bct{\int_{\rN}\md{u}^{2^*}}^{\frac{2_t^*-2}{2^*}} \bct{\int_{\rN}\frac{\md{u}^\frac{22^*}{2*-2_t^*+2}}{\md{x'}^{\frac{2^*t}{2^*-2_t^*+2}}}}^{\frac{2^*-2_t^*+2}{2^*}}.
\end{align}
Now let $\xi := \frac{2^*t}{2^*-2_t^*+2}.$ Then clearly $0<\xi<2s$ and $2^*_\eta = \frac{2^*2}{2^*-2_t^*+2}.$ Hence using \eqref{im_sob1}, \eqref{im_sob3} and \eqref{FHSM_ineq} we get the desired inequality \eqref{im_sob2} with $\theta_1 = \bct{2^*_t - 2}\theta+ \frac{2^*}{2}\bct{1-\frac{2^*_t-2}{2^*}}$ and $ \theta_2 = (1-\theta)\bct{2^*_t-2},$ where $\frac{2^*}{2}<\theta <1$ same as in the inequality \eqref{im_sob1}. 
\end{proof}

\section{fractional Hardy-Maz'ya and Hardy-Sobolev-Maz'ya Inequality}\label{frac_ineq}

In this section, we will give two proofs of fractional Hardy-Maz'ya inequality \eqref{FHM_ineq}. While in the first method, we get \eqref{FHM_ineq} with best possible constant, in the second method, we get the inequality with a rough constant. Also, as a consequence of the results, derived using both the methods we will be able to prove \eqref{FHSM_ineq}.
\subsection{Ground State Representation and the Fractional Hardy-Maz'ya inequality}
In this subsection, we will derive an appropriate ground state representation. Similar representation was proved in \cite{FS1} to derive the fractional Hardy inequality.  In fact, we will use a few results derived in \cite{FS1}. For reader's convenience, let us recall their result. Before that, we need to recall the following assumption. 
\par \textbf{Assumption 1:} Let $\Om\subset \rN$ be any open set. We also assume that $w$ is an almost everywhere positive measurable function in $\Om$ and  there exists a family of measurable function $k_\epsilon$, $\epsilon>0$ on $\Om \times \Om $ satisfying  $k_\epsilon(x,y)= k_\epsilon (y,x)$, $0\leq k_\epsilon(x,y)\leq  k(x,y)$, and 
\begin{align*}
\lim_{\epsilon \rightarrow 0}k_{\epsilon}(x,y) = k(x,y), \ \text{for a.e. } x,y\in \Om. 
\end{align*}
 Moreover, the integrals
 \begin{align*}
 V_{\epsilon}(x):= \frac{2}{w(x)} \int_{\Om} \bct{w(x)-w(y)} k_\epsilon(x,y) dy
 \end{align*}
 are absolutely convergent for a.e. $x \in \Om$ and belong to $L^1_{loc}(\Om)$. In addition to this, we assume that $V:= \displaystyle\lim_{\epsilon \rightarrow 0}V_{\epsilon}$ exists weakly in $L^1_{loc}(\Om).$
 \begin{prop}[Frank and Seiringer]\label{grndst_gen}
 Under the Assumption 1, for any $u\in C_c^{\infty}(\Om)$ we write $v: = \frac{u}{w}$ and assume
 \begin{align*}
 E[u] &: = \int_{\Om}\int_{\Om} \md{u(x)-u(y)}^2 k(x,y) dy dx <\infty,\\
 E_w[v] &: = \int_{\Om}\int_{\Om} \md{v(x)-v(y)}^2 w(x)w(y)k(x,y) dy dx <\infty \ \text{and }\\ 
 \int_{\Om}V^+ |u|^2 dx &<\infty.
\end{align*}
Then $E[u]- \int_{\rN} V(x) \md{u(x)}^2 dx = E_w[v].$

 \end{prop}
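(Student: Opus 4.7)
My plan is to establish the pointwise algebraic identity
\begin{align*}
|u(x)-u(y)|^2 = w(x)w(y)|v(x)-v(y)|^2 + \bigl(w(x)-w(y)\bigr)\bigl(w(x)v(x)^2 - w(y)v(y)^2\bigr),
\end{align*}
which one verifies by expanding $u=wv$ on both sides and cancelling. Multiplying this identity by $k_\epsilon(x,y)$, integrating over $\Omega\times\Omega$, and exploiting the symmetry $k_\epsilon(x,y)=k_\epsilon(y,x)$ allows a clean rearrangement. Specifically, swapping the roles of $x$ and $y$ in the $w(y)v(y)^2$ term shows that
\begin{align*}
\int_{\Omega}\int_{\Omega}\bigl(w(x)-w(y)\bigr)\bigl(w(x)v(x)^2-w(y)v(y)^2\bigr)k_\epsilon(x,y)\,dy\,dx = \int_{\Omega} u(x)^2 V_\epsilon(x)\,dx,
\end{align*}
by the very definition of $V_\epsilon$. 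This yields the regularised identity
\begin{align*}
\int_{\Omega}\int_{\Omega}|u(x)-u(y)|^2 k_\epsilon(x,y)\,dx\,dy - \int_{\Omega} V_\epsilon(x)|u(x)|^2\,dx = \int_{\Omega}\int_{\Omega}|v(x)-v(y)|^2 w(x)w(y)k_\epsilon(x,y)\,dx\,dy.
\end{align*}

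Next I would send $\epsilon\to 0$. For the two double integrals, since $0\le k_\epsilon\le k$ and $k_\epsilon\to k$ almost everywhere, the integrands are dominated by $|u(x)-u(y)|^2 k(x,y)$ and $|v(x)-v(y)|^2 w(x)w(y)k(x,y)$ respectively, which are integrable by the hypotheses $E[u]<\infty$ and $E_w[v]<\infty$; dominated convergence delivers the desired limits $E[u]$ and $E_w[v]$. For the middle term, weak $L^1_{\mathrm{loc}}$-convergence $V_\epsilon\rightharpoonup V$, combined with the fact that $|u|^2\in C_c^\infty(\Omega)\subset L^\infty_{\mathrm{compact}}(\Omega)$, yields $\int_\Omega V_\epsilon |u|^2\,dx \to \int_\Omega V|u|^2\,dx$.

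The main subtlety, and the reason the regularising family $k_\epsilon$ is built into Assumption 1 in the first place, is that the ``rearranged'' integral $\int (w(x)-w(y))k(x,y)\,dy$ defining $V(x)$ typically fails to converge absolutely for the singular kernels of interest; splitting the identity through $k_\epsilon$ is what allows the symmetrisation step to be performed unambiguously before the singularities are restored in the limit. The other point requiring care is that $v=u/w$ must be interpreted correctly: since $w>0$ a.e.\ and $u\in C_c^\infty(\Omega)$, $v$ is defined a.e.\ and the hypothesis $E_w[v]<\infty$ makes the right-hand side meaningful; the pointwise identity then holds a.e., which is all that is needed for the integral manipulations. Assembling these pieces gives $E[u]-\int_\Omega V|u|^2\,dx = E_w[v]$, as claimed.
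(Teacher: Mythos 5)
The paper does not prove this proposition at all: it is quoted verbatim from Frank and Seiringer \cite{FS1} precisely so that its proof can be omitted. Your argument is correct and is essentially the original Frank--Seiringer proof: the pointwise identity $|u(x)-u(y)|^2 = w(x)w(y)|v(x)-v(y)|^2 + (w(x)-w(y))(w(x)v(x)^2-w(y)v(y)^2)$, symmetrisation against $k_\epsilon$ to produce $\int u^2 V_\epsilon$ (legitimate because Assumption~1 demands absolute convergence of the integrals defining $V_\epsilon$, which is what licenses the $x\leftrightarrow y$ swap), dominated convergence for the two quadratic forms, and weak $L^1_{loc}$ convergence of $V_\epsilon$ tested against $|u|^2\in C_c^\infty(\Om)$ for the potential term.
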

 As a particular example, we take $\Om = (\rk \setminus \{0\}) \times \rNk$ , $w(x):= 1/|x'|^{\tilde{\alpha}}$, for \\  $0<\tilde{\alpha}\leq \frac{k-2s}{2}$, $k(x,y) : = \frac{1}{\md{x-y}^{N+2s}}$, $V(x) : = \mcal{C}_{N,k,s}(\tilde{\alpha}) \frac{1}{\md{x'}^{2s}},$ where  $\mcal{C}_{N,k,s}(\ta)$ is defined by \\ $\mcal{C}_{N,k,s}(\tilde{\alpha}) := \mcal{\overline{C}}_{k,s}(\tilde{\alpha}) \int_{\rNk}\frac{dy''}{\bct{1+\md{y''}^2}^\frac{N+2s}{2}}$ and the function $\mcal{\overline{C}}_{k,s}(\tilde{\alpha})$ is defined in Lemma \ref{grndst_par2} below. We denote $\mcal{C}_{N,k,s}:= \mcal{C}_{N,k,s}\bct{(k-2s)/2}$. We also take, $k_\epsilon(x,y) :=\frac{1}{\md{x-y}^{N+2s}} \chi _{\{\md{|x'|-|y'|}>\epsilon\}}$. Then we have the following lemma.
 \begin{lem}\label{grndst_par1}
 The following limit converges uniformly for $x$ from compact sets in $\rN_k$.
 \begin{align*}
 2\lim_{\epsilon \rightarrow 0} \int_{\rN} \bct{w(x)-w(y)}k_{\epsilon}(x,y) dy = \mcal{C}_{N,k,s} (\tilde{\alpha})\frac{w(x)}{\md{x'}^{2s}}.
 \end{align*}
 \end{lem}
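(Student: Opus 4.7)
My plan is to integrate out the $y''$ variable via Fubini, then use the scaling $y'\mapsto|x'|z'$ to reduce the identity to a single $x'$-independent principal-value integral on $\rk$ whose existence provides the constant $\overline{\mcal{C}}_{k,s}(\tilde{\alpha})$.

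\textbf{Steps 1--2 (Fubini and scaling).} Because $w(x)-w(y)=|x'|^{-\tilde{\alpha}}-|y'|^{-\tilde{\alpha}}$ depends only on the primed coordinates, the substitution $y''=x''+|x'-y'|w$ yields
\begin{align*}
\int_{\rNk}\frac{dy''}{|x-y|^{N+2s}}=\frac{I_{N,k,s}}{|x'-y'|^{k+2s}},\qquad I_{N,k,s}:=\int_{\rNk}\frac{dw}{(1+|w|^2)^{(N+2s)/2}},
\end{align*}
so that Fubini gives
\begin{align*}
2\int_{\rN}(w(x)-w(y))k_{\epsilon}(x,y)\,dy=2I_{N,k,s}\int_{\rk}\frac{|x'|^{-\tilde{\alpha}}-|y'|^{-\tilde{\alpha}}}{|x'-y'|^{k+2s}}\chi_{\{||x'|-|y'||>\epsilon\}}\,dy'.
\end{align*}
Rescaling $y'=|x'|z'$ and rotating so that $x'/|x'|=e$ for a fixed unit vector $e\in\rk$ transforms the right-hand side into
\begin{align*}
\frac{2I_{N,k,s}}{|x'|^{\tilde{\alpha}+2s}}\int_{\{||z'|-1|>\epsilon/|x'|\}}\frac{1-|z'|^{-\tilde{\alpha}}}{|e-z'|^{k+2s}}\,dz'.
\end{align*}
Hence it is enough to verify that the principal value
\begin{align*}
\overline{\mcal{C}}_{k,s}(\tilde{\alpha}):=2\lim_{\delta\to 0^{+}}\int_{\{||z'|-1|>\delta\}}\frac{1-|z'|^{-\tilde{\alpha}}}{|e-z'|^{k+2s}}\,dz'
\end{align*}
exists as a finite number (independent of $e$ by rotational symmetry), in which case the lemma follows with $\mcal{C}_{N,k,s}(\tilde{\alpha})=I_{N,k,s}\,\overline{\mcal{C}}_{k,s}(\tilde{\alpha})$. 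Uniform convergence on any compact $K\subset\rN_k$ is then automatic: since $\inf_{x\in K}|x'|>0$, the cutoff parameter $\epsilon/|x'|$ tends to $0$ uniformly on $K$.

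\textbf{Step 3 (Existence of the principal value; the main obstacle).} Away from a small ball around $z'=e$ the integrand is absolutely integrable, because $1-|z'|^{-\tilde{\alpha}}$ vanishes to first order on the sphere $\{|z'|=1\}$ (offsetting the kernel's mild singularity on that sphere), and decay as $|z'|\to\infty$ is routine. The delicate region is a neighborhood of $z'=e$, and here the symmetric shell cutoff is essential. Writing $z'=e+h=(1+h_{\parallel})e+h_{\perp}$ with $h_{\parallel}\in\ro$ and $h_{\perp}\perp e$, Taylor expansion gives
\begin{align*}
|z'|-1=h_{\parallel}+O(|h|^{2}),\qquad 1-|z'|^{-\tilde{\alpha}}=\tilde{\alpha}\,h_{\parallel}+O(|h|^{2}),\qquad |e-z'|=|h|.
\end{align*}
The leading piece $\tilde{\alpha}\,h_{\parallel}/|h|^{k+2s}$ is odd under $h_{\parallel}\mapsto-h_{\parallel}$, and the cutoff $||z'|-1|>\delta$ becomes to leading order $\{|h_{\parallel}|>\delta\}$, which is symmetric under this reflection, so this contribution cancels in the limit (the higher-order correction to the cutoff contributes an error that vanishes as $\delta\to 0$). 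The remaining piece is pointwise $O(|h|^{2-k-2s})$, which is absolutely integrable near $h=0$ since $\int_{0}^{r_{0}}r^{k-1}\cdot r^{2-k-2s}\,dr=\int_{0}^{r_{0}}r^{1-2s}\,dr<\infty$ for $s<1$. Passing to the limit $\delta\to 0$ therefore produces a finite value, which defines $\overline{\mcal{C}}_{k,s}(\tilde{\alpha})$ and completes the proof.
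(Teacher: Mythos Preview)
Your Steps 1--2 (integrate out $y''$ by Fubini, then rescale) reproduce exactly the paper's proof: the paper observes the identity
\[
2\int_{\rN_k}\bct{w(x)-w(y)}k_\epsilon(x,y)\,dy=I_{N,k,s}\int_{\{||x'|-|y'||>\epsilon\}}\frac{w(x')-w(y')}{|x'-y'|^{k+2s}}\,dy'
\]
and then defers everything else to Lemma~\ref{grndst_par2}, which it cites from Frank--Seiringer \cite{FS1}. So the reduction is identical.

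Your Step 3 goes beyond the paper by supplying a self-contained argument for the $k$-dimensional principal value (i.e.\ for Lemma~\ref{grndst_par2} itself). This is a genuinely different route from \cite{FS1}: there one passes to polar coordinates and exploits the inversion $r\mapsto 1/r$ to symmetrize the integral, which both proves convergence and produces the explicit closed form $\overline{\mcal{C}}_{k,s}(\tilde\alpha)=2\int_0^1 r^{2s-1}|1-r^{\tilde\alpha}|^2\Phi_{k,s}(r)\,dr$. Your Taylor-expansion approach near $z'=e$ is more hands-on and, in principle, correct: the leading odd piece cancels against the (approximately) symmetric shell cutoff, and the remainder is absolutely integrable. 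Two small caveats. First, the sentence about the integrand ``vanishing to first order on the sphere, offsetting the kernel's mild singularity there'' is misplaced---away from $z'=e$ the kernel $|e-z'|^{-(k+2s)}$ has no singularity on the unit sphere; the only issues outside a ball around $e$ are at $z'=0$ (where $\tilde\alpha<k$) and at infinity. Second, the phrase ``the higher-order correction to the cutoff contributes an error that vanishes'' hides a nontrivial estimate (the asymmetry of the shell cutoff under $h_\parallel\mapsto -h_\parallel$ is of order $|h_\perp|^2$, and one must check the resulting boundary term integrates to $o(1)$ in $\delta$); this does work out for all $0<s<1$, but it deserves a line of justification. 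Finally, your argument shows the limit exists and has the right form but does not identify the constant with the paper's explicit expression for $\overline{\mcal{C}}_{k,s}(\tilde\alpha)$; that identification is needed downstream (e.g.\ to see monotonicity in $\tilde\alpha$ and to recover the sharp Hardy constant at $\tilde\alpha=(k-2s)/2$), which is what the polar-coordinate approach of \cite{FS1} buys.
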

\begin{proof}
It is enough to notice that the following identity is true. The rest will follow from Lemma \ref{grndst_par2} below.
\begin{align*}
2\int_{\rN_k} \bct{w(x)-w(y)}k_{\epsilon}(x,y) dy = I_{N,k,s} \int_{\md{|x'|-|y'|}>\epsilon} \frac{w(x')-w(y')}{\md{x'-y'}^{k+2s}} dy,
\end{align*}
where $I_{N,k,s} := \int_{\rNk}\frac{dy''}{\bct{1+\md{y''}^2}^\frac{N+2s}{2}}<\infty$. This proves the lemma.
\end{proof}

\begin{lem}\label{grndst_par2}
One has uniformly for $x$ from compact sets in $\rk \setminus \{0\}$
\begin{align*}
2\lim_{\epsilon \rightarrow 0} \int_{\rk} \frac{\bct{w(x')-w(y')}}{\md{x'-y'}^{k+2s}} dy = \mcal{\overline{C}}_{k,s}(\tilde{\alpha}) \frac{w(x')}{\md{x'}^{2s}},
\end{align*}
where 
\begin{align}\label{alphatilde1}
\mcal{\overline{C}}_{k,s}(\tilde{\alpha}) &:= 2\int_0^1 r^{2s-1}\md{1-r^{\tilde{\alpha}}}^2 \Phi_{k,s}(r) dr, \\ 
\Phi_{k,s}(r) &: = \md{S^{k-2}} \int_{-1}^1 \frac{\bct{1-t^2}^{\frac{k-3}{2}}dt}{\bct{1-2rt+r^2}^{\frac{k+2s}{2}}}, \ 0<\tilde{\alpha}\leq \frac{k-2s}{2}, \ \text{and } k\geq2. \notag
\end{align}

\end{lem}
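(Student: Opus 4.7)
The plan is to reduce the singular $k$-dimensional integral to a one-dimensional principal value integral on $(0,\infty)$ via scaling, rotation and spherical coordinates, then handle the PV using a self-similarity of $\Phi_{k,s}$. Since $w(x')=|x'|^{-\tilde\alpha}$ is homogeneous of degree $-\tilde\alpha$ and the kernel $|x'-y'|^{-(k+2s)}$ is homogeneous of degree $-(k+2s)$, the substitution $y'=|x'|z$ gives
\begin{align*}
2\int_{||y'|-|x'||>\epsilon}\frac{w(x')-w(y')}{|x'-y'|^{k+2s}}\,dy' = \frac{w(x')}{|x'|^{2s}}\cdot 2\int_{||z|-1|>\epsilon/|x'|}\frac{1-|z|^{-\tilde\alpha}}{|e-z|^{k+2s}}\,dz,
\end{align*}
where $e=x'/|x'|$; by rotation invariance I can take $e=e_1$, and uniform convergence on compact subsets of $\mathbb{R}^k\setminus\{0\}$ is automatic because $\epsilon/|x'|\to 0$ uniformly on such sets.

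Switching to spherical coordinates $z=r\omega$, $\omega\in S^{k-1}$, and parametrising $\omega$ by $t=\omega\cdot e_1\in[-1,1]$ with sphere measure $|S^{k-2}|(1-t^2)^{(k-3)/2}\,dt\,d\sigma_{S^{k-2}}$, the angular integral reproduces exactly $\Phi_{k,s}(r)$, reducing the question to evaluating
\begin{align*}
J:=2\lim_{\delta\to 0}\int_{|r-1|>\delta}(1-r^{-\tilde\alpha})r^{k-1}\Phi_{k,s}(r)\,dr.
\end{align*}
The key tool is the symmetry $\Phi_{k,s}(1/r)=r^{k+2s}\Phi_{k,s}(r)$, which is immediate upon pulling $r^{-(k+2s)}$ out of the integrand in the definition of $\Phi_{k,s}(1/r)$. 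Applying $r\mapsto 1/\rho$ to the tail $r>1+\delta$ and using this identity together with $dr=-d\rho/\rho^2$, the tail becomes $\int_0^{1/(1+\delta)}(1-\rho^{\tilde\alpha})\rho^{2s-1}\Phi_{k,s}(\rho)\,d\rho$.

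Adding this to the head $\int_0^{1-\delta}(1-r^{-\tilde\alpha})r^{k-1}\Phi_{k,s}(r)\,dr$ and sending $\delta\to 0$, I expect
\begin{align*}
J=\int_0^1\bigl[(1-r^{-\tilde\alpha})r^{k-1}+(1-r^{\tilde\alpha})r^{2s-1}\bigr]\Phi_{k,s}(r)\,dr.
\end{align*}
Using $k-1=2\alpha+2s-1$ with $\alpha=(k-2s)/2$, the bracketed sum factors as $r^{2s-1}(1-r^{\tilde\alpha})(1-r^{2\alpha-\tilde\alpha})$, which at the critical value $\tilde\alpha=\alpha$ collapses to $r^{2s-1}(1-r^{\tilde\alpha})^2$, giving $\mcal{\overline{C}}_{k,s}(\tilde\alpha)$. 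The main obstacle is the principal value cancellation near $r=1$: since $\Phi_{k,s}(r)\sim c|1-r|^{-1-2s}$, each individual piece has a singularity of order $|1-r|^{-2s}$ at $r=1$ and is not absolutely integrable when $s\geq 1/2$, but the sum of the two integrands vanishes to order $(1-r)^2$ at $r=1$ (both linear terms in $r-1$ equal $\pm\tilde\alpha(r-1)$), making the combined integrand $O(|1-r|^{1-2s})$ and hence integrable for all $s<1$. Separately, the narrow gap $(1-\delta,1/(1+\delta))$ has width $O(\delta^2)$ and contributes $O(\delta^{2-2s})=o(1)$, which closes the argument.
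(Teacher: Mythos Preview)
Your argument is correct and is precisely the Frank--Seiringer computation the paper defers to: the paper gives no independent proof of this lemma but simply cites \cite{FS1}, Lemma~3.1, and remarks that the same calculation goes through for $0<\tilde\alpha<(k-2s)/2$. Your scaling reduction, the spherical-coordinate identification of $\Phi_{k,s}$, the inversion symmetry $\Phi_{k,s}(1/r)=r^{k+2s}\Phi_{k,s}(r)$, and the PV cancellation are exactly the ingredients of that proof. One cosmetic slip: in the displayed line ``$J=\int_0^1[\ldots]$'' you dropped the overall factor~$2$ from your definition of $J$; restoring it, your final answer is $J=2\int_0^1 r^{2s-1}(1-r^{\tilde\alpha})(1-r^{2\alpha-\tilde\alpha})\Phi_{k,s}(r)\,dr$.

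More interestingly, you have in fact uncovered a misprint in the paper's statement of \eqref{alphatilde1}. Your computation shows that for \emph{general} $0<\tilde\alpha\le\alpha$ the constant is
\[
\overline{\mcal C}_{k,s}(\tilde\alpha)=2\int_0^1 r^{2s-1}\bigl(1-r^{\tilde\alpha}\bigr)\bigl(1-r^{\,k-2s-\tilde\alpha}\bigr)\Phi_{k,s}(r)\,dr,
\]
not $2\int_0^1 r^{2s-1}(1-r^{\tilde\alpha})^2\Phi_{k,s}(r)\,dr$ as printed; the two coincide only at the critical value $\tilde\alpha=\alpha=(k-2s)/2$, which is why you could only match the stated formula there. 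The corrected expression is the one in Frank--Seiringer, is manifestly symmetric under $\tilde\alpha\mapsto k-2s-\tilde\alpha$ (reflecting the Kelvin symmetry of $|x|^{-\tilde\alpha}$), and is indeed strictly increasing on $[0,\alpha]$ with $\overline{\mcal C}_{k,s}(0)=0$, so all of the paper's subsequent claims (continuity, monotonicity, the bijection \eqref{alphatilde}) remain valid once the formula is corrected.
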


The above lemma was proved in \cite{FS1} (Lemma 3.1), in the case of $\ta = (k-2s)/2$. But, it is easy to see that the same proof will work even for $0<\ta<\frac{k-2s}{2}$. Notice that 

\begin{align*}
\mcal{\overline{C}}_{k,s}(0)=0, \ \mcal{\overline{C}}_{k,s}(\frac{k-2s}{2})= 2\pi^{k/2}\frac{\Gamma^2\bct{(k+2s)/4}}{\Gamma^2\bct{(k-2s)/4}}\frac{\md{\Gamma(-s)}}{\Gamma\bct{(k+2s)/2}}
\end{align*}
 i.e. the best constant of fractional Hardy inequality in dimension $k$. Also, $\mcal{\overline{C}}_{k,s}(\tilde{\alpha})$ is strictly increasing and continuous in $[0,(k-2s)/2]$. So, for any $0\leq \beta \leq \mcal{C}_{N,k,s}$ there exists unique $\ta \in [0,\frac{k-2s}{2}]$ such that 
\begin{align}\label{alphatilde}
\beta = \mcal{C}_{N,k,s}(\ta) = \overline{\mcal{C}}_{k,\alpha}(\ta) \int_{\rNk} \frac{dy''}{\bct{1+|y''|^2}^{\frac{N+2s}{2}}},
\end{align}
where $\overline{\mcal{C}}_{k,s}(\ta)$ is defined in \eqref{alphatilde1}.
 Hence, summarizing the above discussion we have the following theorem. 
\begin{thm}[Ground State Represetation]\label{FHM_grndst}
Let $u \in C_c^{0,1}(\rk\setminus\{0\} \times \rNk)$, $k\geq 2$, and $0<s<1$. Then for any $0<\beta \leq \mcal{C}_{N,k,s}$ there exist a unique $0<\tilde{\alpha}\leq (k-2s)/2$ such that 
\begin{align}\label{grndst_rep}
\int_{\rN}\int_{\rN} \frac{\md{u(x)-u(y)}^2}{\md{x-y}^{N+2s}}dx dy -\beta \int_{\rN} \frac{u^2(x)}{\md{x'}^{2s}}dx = \int_{\rN}\int_{\rN} \frac{\md{v(x)-v(y)}^2}{\md{x-y}^{N+2s}\md{x'}^{\tilde{\alpha}}\md{y'}^{\tilde{\alpha}}}dx dy.
\end{align}
 Here, $v(x) = \md{x'}^{\tilde{\alpha}}u(x)$, $\mcal{C}_{N,k,s} = \mcal{\bar{C}}_{k,s} \int_{\rNk}\frac{dy''}{\bct{1+\md{y''}^2}^\frac{N+2s}{2}}$ and $\mcal{\bar{C}}_{k,s}$ is the best constant of the fractional Hardy inequality for dimension $k.$ Moreover, $\tilde{\alpha}= (k-2s)/2$ whenever $\beta = \mcal{C}_{N,k,s}.$
\end{thm}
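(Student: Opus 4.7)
The plan is to apply Frank--Seiringer's Proposition \ref{grndst_gen} with the choices of weight and kernels that were laid out immediately before the theorem statement: $\Om=\rN_k=(\rk\setminus\{0\})\times\rNk$, $w(x)=|x'|^{-\tilde\alpha}$, $k(x,y)=|x-y|^{-(N+2s)}$, $k_\epsilon(x,y)=k(x,y)\chi_{\{||x'|-|y'||>\epsilon\}}$, and $V(x)=\mcal{C}_{N,k,s}(\tilde\alpha)|x'|^{-2s}$. Before doing so, I would first pin down $\tilde\alpha$: by Lemma \ref{grndst_par2}, $\mcal{\overline{C}}_{k,s}(\tilde\alpha)$ is given by an explicit one-dimensional integral in $\tilde\alpha$; an inspection of the integrand shows that $\tilde\alpha\mapsto\mcal{\overline{C}}_{k,s}(\tilde\alpha)$ is continuous and strictly increasing on $[0,(k-2s)/2]$, with $\mcal{\overline{C}}_{k,s}(0)=0$ and $\mcal{\overline{C}}_{k,s}((k-2s)/2)$ equal to the best fractional Hardy constant in $\rk$. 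Multiplying through by the finite factor $\int_{\rNk}(1+|y''|^2)^{-(N+2s)/2}dy''$ produces $\mcal{C}_{N,k,s}(\tilde\alpha)$, which establishes a bijection between $\tilde\alpha\in[0,(k-2s)/2]$ and $\beta\in[0,\mcal{C}_{N,k,s}]$ via \eqref{alphatilde}; in particular $\tilde\alpha=(k-2s)/2$ corresponds exactly to $\beta=\mcal{C}_{N,k,s}$.

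Next I would verify Assumption~1 of Proposition \ref{grndst_gen}. Symmetry and the inequality $0\le k_\epsilon\le k$ are immediate, and $k_\epsilon\to k$ almost everywhere since the set $\{|x'|=|y'|\}$ has $2N$-dimensional Lebesgue measure zero. The key convergence statement $V_\epsilon\to V$ is exactly the content of Lemma \ref{grndst_par1} (which reduces to Lemma \ref{grndst_par2} via integrating out the $y''$ variable); in fact one obtains uniform convergence on compact subsets of $\rN_k$, which is more than enough to conclude weak convergence in $L^1_{\mathrm{loc}}(\rN_k)$ as required.

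The bulk of the remaining work consists in verifying the three finiteness hypotheses in Proposition \ref{grndst_gen} for $u\in C_c^{0,1}(\rN_k)$ and $v=|x'|^{\tilde\alpha}u$. For $E[u]$ this is standard, because a Lipschitz function with compact support has finite Gagliardo seminorm for $s<1$. The potential term $\int V^+|u|^2=\mcal{C}_{N,k,s}(\tilde\alpha)\int u^2|x'|^{-2s}dx$ is finite because $u$ is bounded and its support is bounded away from $\{x'=0\}$. The one genuinely subtle point is the finiteness of
\[
E_w[v]=\int_{\rN}\int_{\rN}\frac{|v(x)-v(y)|^2}{|x-y|^{N+2s}|x'|^{\tilde\alpha}|y'|^{\tilde\alpha}}\,dx\,dy,
\]
which I would split into the diagonal region $x,y\in\operatorname{supp}v$ (a compact subset of $\rN_k$, where the integrand is bounded by the Lipschitz bound for $v$ and is integrable as above), the tail region where $|y|\to\infty$ (handled by the $|x-y|^{-(N+2s)}$ decay together with the local boundedness of $w$ on $\operatorname{supp}v$), and the singular region where $|y'|\to 0$ with $x\in\operatorname{supp}v$. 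In this last region I integrate out $y''\in\rNk$ by scaling to reduce to a $\rk$-integral with kernel $|y'|^{-\tilde\alpha}|x'-y'|^{-(k+2s)}$, and use $|x'|\ge\delta>0$ on $\operatorname{supp}v$ to bound this by $C\int_0^{\delta/2}r^{k-1-\tilde\alpha}dr$, which is finite precisely because $\tilde\alpha\le(k-2s)/2<k$. This is the main technical obstacle; once it is resolved, Proposition \ref{grndst_gen} delivers the identity $E[u]-\int V|u|^2\,dx=E_w[v]$, which is exactly \eqref{grndst_rep}.
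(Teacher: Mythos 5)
Your proposal follows essentially the same route as the paper: the theorem is obtained by applying the Frank--Seiringer representation (Proposition \ref{grndst_gen}) with the weight $w=|x'|^{-\tilde\alpha}$ and kernel $k(x,y)=|x-y|^{-(N+2s)}$, with Assumption~1 verified via Lemmas \ref{grndst_par1}--\ref{grndst_par2} and the choice of $\tilde\alpha$ fixed by the strict monotonicity and continuity of $\mcal{\overline{C}}_{k,s}(\cdot)$ on $[0,(k-2s)/2]$. Your explicit verification of the three finiteness hypotheses (in particular the splitting of $E_w[v]$ near $\{y'=0\}$, where $\tilde\alpha<k$ saves the integral) is a correct filling-in of details the paper leaves implicit.
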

As a consequence of the above ground state representation, we have the fractional Hardy-Maz'ya inequality \eqref{FHM_ineq}. Also, one can follow the same lines of Frank and Seiringer \cite{FS2}, to conclude that $\mcal{C}_{N,k,s}$ is actually the best constant.

\subsection{John Domain and Fractional Hardy-Sobolev-Maz'ya inequality} 
 To prove the fractional Hardy-Sobolev-Maz'ya inequality,  we will use the fact,  $\rN_k$ is a John domain for $2\leq k\leq N-2$ and a recent result by Dyda, Lehrb\"ack and V\"ah\"akangas \cite{DLV}. For reader's convenience we will state their result below. First, let us recall some definitions.
 \begin{defi}[Assouad dimension]
 For $D \subset \rN$, the Assouad dimension denoted by $\dim_A(D)$ is the infimum of all exponent $\beta \geq 0$, for which there is a constant $C\geq1$, such that for every $x\in D$ and every $0<r<R$, the set $D\cap B^N_R(x)$ can be covered by at most $C\bct{\frac{R}{r}}^\beta$  balls of radius $r$.
 \end{defi} 
\begin{defi}[John Domain]
A domain $\Om \subsetneq \rN$, with $N\geq 2$, is a c-John domain, for $c\geq1,$ if each pair of points $x_1,x_2 \in \Om$ can be joined by a rectifiable arc length parametrized curve $\gamma : [0,l]\rightarrow \Om$ satisfying $\operatorname{dist}\bct{\gamma(t),\Om}\geq \min\{t,l-t\}/c$ for every $t\in [0,l]. $
\end{defi}
\begin{thm}[Dyda, Lehrb\"ack and V\"ah\"akangas]\label{fhsm_jhn_dom}
Assume that $\Om \subsetneq \rN$ is an unbounded c-John Domain, $0<s<1$ and $1<p\leq q\leq \frac{Np}{N-ps}.$ Let $\beta \in \ro$ be such that
\begin{align*}
\dim_A(\partial\Om) < \min\left\{\frac{q}{p}\bct{N-sp+\beta},N-\frac{\beta}{p-1}\right\}.
\end{align*}
  Then for any $\tau \in (0,1)$ there exists a constant $C= C(\beta,\tau,N,s,p,c)>0$ such that for any $u \in \cup_{1\leq r <\infty}L^r(\Om) $ we have 
  \begin{align*}
 \bct{ \int_{\Om} \md{u(x)}^q \delta_{\partial\Om}^{\frac{q}{p}\bct{N-sp+\beta}-N}(x)dx}^{\frac{p}{q}} \leq C \int_{\Om}\int_{B^N_{\tau \delta_{\partial\Om}(x)}(x)}\frac{\md{u(x)-u(y)}^p}{\md{x-y}^{N+ps}} dy \delta_{\partial\Om}^\beta(x) dx,
  \end{align*}
where $\delta_{\partial\Om}^\beta(x) : = \operatorname{dist}(x,\partial\Om).$
\end{thm}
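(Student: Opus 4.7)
The plan is to combine a Whitney-type decomposition of $\Om$ with a pointwise chaining estimate and then perform a weighted dyadic summation controlled by the Assouad dimension hypothesis. First I would fix a Whitney decomposition $\mcal{W}(\Om)$ into cubes $Q$ with $\operatorname{diam}(Q) \sim \operatorname{dist}(Q,\partial\Om)$. Using the unbounded $c$-John condition, every $x \in \Om$ can be joined by an admissible chain $(Q_j(x))_{j\ge 0}$ of Whitney cubes whose sidelengths decay geometrically from a fixed reference level down to the cube containing $x$, such that consecutive cubes are uniformly comparable neighbors and all cubes in the chain lie inside $B^N_{c'\delta_{\partial\Om}(x)}(x)$ for some $c'=c'(c,N)$.

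Second, for each $x \in \Om$ I would establish the pointwise telescoping inequality
\begin{align*}
\md{u(x)-u_{Q_0(x)}}^p \;\lesssim\; \sum_{j\ge 0} \ell(Q_j)^{sp-N} \int_{Q_j}\int_{Q_j} \frac{\md{u(z)-u(w)}^p}{\md{z-w}^{N+sp}}\, dz\, dw,
\end{align*}
which follows from a fractional Poincar{\'e} inequality on each Whitney cube together with the John chain. Since every cube in the chain lies in $B^N_{c'\delta_{\partial\Om}(x)}(x)$, the inner double integral on the right is dominated by its truncation to $B^N_{\tau\delta_{\partial\Om}(x)}(x)$, after adjusting $\tau$ suitably and enlarging the cubes by a fixed factor.

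Third, I would raise this estimate to the $q$-th power, multiply by the weight $\delta_{\partial\Om}^{(q/p)(N-sp+\beta)-N}$, and integrate over $\Om$. After reorganizing in terms of Whitney cubes, the problem reduces to a discrete two-weight inequality of Wolff/Hardy type on the Whitney tree. Here the two-sided Assouad dimension hypothesis plays a decisive role: the upper bound $\dim_A(\partial\Om) < (q/p)(N-sp+\beta)$ furnishes a Carleson-type packing condition that bounds aggregated weights near $\partial\Om$, while $\dim_A(\partial\Om) < N-\beta/(p-1)$ supplies the dual testing condition allowing one to sum along John chains after an application of H{\"o}lder's inequality with exponent $p/q$.

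The main obstacle is this discrete inequality in the third step: matching the chain of Whitney cubes against the density estimate on $\partial\Om$ coming from the Assouad dimension requires a delicate dyadic stopping-time argument of Muckenhoupt type. The endpoint case $q=Np/(N-sp)$ additionally demands invoking the classical fractional Sobolev--Poincar{\'e} inequality on each Whitney cube, while the subcritical range $q<Np/(N-sp)$ is then recovered by H{\"o}lder interpolation.
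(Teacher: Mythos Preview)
The paper does not prove this theorem at all: it is quoted verbatim as a result of Dyda, Lehrb\"ack and V\"ah\"akangas \cite{DLV} and used as a black box to derive \eqref{FHSM_ineq} and \eqref{EFHSM_ineq}. So there is no proof in the paper against which to compare your proposal.

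That said, your outline is broadly the right architecture for results of this type (Whitney decomposition, John chains, local fractional Poincar\'e, and a discrete weighted inequality governed by the Assouad dimension), but one step is wrong as stated. You claim that ``all cubes in the chain lie inside $B^N_{c'\delta_{\partial\Om}(x)}(x)$'' and use this to justify the truncation of the inner integral to $B^N_{\tau\delta_{\partial\Om}(x)}(x)$. This containment is false: in a John domain the chain from the cube containing $x$ runs toward larger and larger Whitney cubes, whose diameters and distances from $x$ grow without bound relative to $\delta_{\partial\Om}(x)$. The truncation on the right-hand side is not obtained that way. The correct mechanism appears only \emph{after} you integrate in $x$ and reorganize the sum by Whitney cubes: for a fixed Whitney cube $Q$ and $z\in Q$ one has $Q\subset B^N_{\tau\delta_{\partial\Om}(z)}(z)$ for a suitable $\tau$, because $\operatorname{diam}(Q)\sim\operatorname{dist}(Q,\partial\Om)\sim\delta_{\partial\Om}(z)$; hence the local double integral $\int_Q\int_Q\frac{|u(z)-u(w)|^p}{|z-w|^{N+sp}}\,dw\,dz$ is controlled by $\int_Q\int_{B^N_{\tau\delta_{\partial\Om}(z)}(z)}\frac{|u(z)-u(w)|^p}{|z-w|^{N+sp}}\,dw\,\delta_{\partial\Om}^\beta(z)\,dz$ with the correct weight. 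The remaining ``shadow'' counting---how many chains pass through a given cube $Q$---is exactly where the Assouad hypothesis enters, as you indicate, but it must be coupled with this corrected localization rather than the ball-around-$x$ picture.
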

Choosing $\beta = 2s-k$, $p=2$, $q:=\frac{2(N-t)}{N-2s}$, $0\leq t<2s$, $0<s<1$, $\Om = \rk\setminus\{0\}\times \rNk$, $2\leq k\leq N-2$ in Theorem \ref{fhsm_jhn_dom} and using \eqref{grndst_rep} we get the inequality \eqref{FHSM_ineq}. Also notice that, choosing $\beta =0$ and $q=p=2$ we get the inequality \eqref{FHM_ineq} with a rough constant. For future reference, let us clarify that, as a consequence of the above discussion we get the following equivalent version of inequality \eqref{FHSM_ineq}.
\begin{align}\label{EFHSM_ineq}
C\bct{\int_{\rN}\frac{\md{u(x)}^{2^*_t}}{\md{x'}^{t+\ta2^*_t}}dx}^{\frac{2}{2^*_t}} \leq \int_{\rN}\int_{\rN}\frac{|u(x)-u(y)|^2}{|x-y|^{N+2s}|x'|^{\ta}|y'|^{\ta}} dx dy, \ \forall u\in C_c^{0,1}(\rN_k),
\end{align} 
where, $0<\ta \leq \frac{k-2s}{2}$, $0\leq t<2s$, $0<s<1$, $2\leq k \leq N-2$, $2^*_t = \frac{2(N-t)}{N-2s}$ and $C>0$ is constant depending only on $N,s,k,t$ and $\ta$.
\section{Existence of solution}\label{exis}
In this section, we will prove Theorem \ref{exis_FHSM} and Theorem \ref{exisb_FHSM}. 
\subsection{Proof of Theorem \ref{exis_FHSM}}
\begin{proof}
It is evident that $\mcal{\dot{H}}_\beta^s(\rN) = \dot{H}^s(\rN)$ for $\beta < \mcal{C}_{N,k,s}$. In fact, $\left[u\right]_{s,\beta,\rN}$ is an equivalent norm in $\dot{H}^s(\rN).$
For $u,v \in \dot{H}^s(\rN)$, let us define 

\begin{align*}
E[u,v]: = \int_{\rN}\int_{\rN} \frac{\bct{u(x)-u(y)}\bct{v(x)-v(y)}}{\md{x-y}^{N+2s}}dx dy -\beta \int_{\rN} \frac{uv}{\md{x'}^{2s}}dx.
\end{align*}
We will proof the theorem by dividing into two cases.\\
\textbf{Case I : t=0.} In this case we have 
\begin{align*}
S_k(\beta): = S^0_k(\beta) = \displaystyle \sup_{u\in \dot{H}(\rN)\setminus \{0\}} \frac{\int_{\rN}\md{u}^{2^*}}{\bct{E[u,u]}^{\frac{2^*}{2}}}.
\end{align*}
Let $\{u_m\}$ be any maximizing sequence. If necessary by normalizing we can assume $E[u_m,u_m] = 1, \ \forall m \in \mathbb{N}.$ In addition to this, we could also assume that $u_{m'}$s are nonnegative and radially symmetric in the first variable. This follows from inequality \eqref{fpol_sze} stated in Theorem \ref{mas_inq}. 

For now our aim is to show that, there exist $R_m>0$ and $x''_m\in \rNk$ such that the sequence $\{v_m\}$ defined by 
\begin{align*}
v_m(x',x''):= R_m^{\frac{N-2s}{2}}u_m(R_mx',R_mx''+x_m'')
\end{align*}
weakly converges to some non zero $v\in \dot{H}(\rN).$ 
Using the fact that $\nrm{u_m}_{L^{2^*}(\rN)}^{2^*} \rightarrow S_k(\beta)$ and \eqref{im_sob1} we have that there exists $R_m>0$ and $x_m\in \rN$ such that 
\begin{align*}
\frac{1}{R_m^{2s}} \int_{B^N_{R_m}(x_m)} \md{u_m(y)}^2 dy \geq C>0,
\end{align*} 
for some constant $C$ independent of $m$. Hence we have the following:

\begin{align}\label{com_est1}
\frac{1}{R_m^{2s}} \int_{B^{N-k}_{R_m}(x''_m)}\int_{B^k_{R_m}(x'_m)} \md{u_m(y',y'')}^2 dy' dy'' \geq C>0 ,\ \forall m \in \mathbb{N}.
\end{align}

Now let us define 
\begin{align*}
v_m(y',y'') := R_m^{\frac{N-2s}{2}}u_m(R_my',R_my''+x_m'').
\end{align*}
Then from \eqref{com_est1} we have 
\begin{align}\label{com_est2}
\int_{B_1^{N-k}(0)}\int_{B_1^k(\bar{x}_m')} \md{v_m(y)}^2 dy \geq C >0 , \ \forall m \in \mathbb{N},
\end{align}
where $\bar{x}'_m := \frac{x'_m}{R_m}.$ Clearly, $E[v_m,v_m] = E_[u_m,u_m] = 1.$ Hence by compactness we can assume the following: upto a subsequence 
\begin{itemize}
\item[(i)] $v_m \rightharpoonup v$ in $\dot{H}^s(\rN),$ 
\item[(ii)] $v_m \rightarrow v$ in $L^2_{loc}(\rN)$ and, 
\item[(iii)] $v_m \rightarrow v$ a.e.
\end{itemize}
We will prove that $v\neq 0$ by considering the following two cases.
\begin{itemize}
\item[Case I:] Upto a subsequence $\md{\bar{x}'_m} \rightarrow \infty$.  
\item[CaseII:] $\{\bar{x}_m'\}$ is bounded sequence in $\rk.$
\end{itemize}
\par In the first case, by using radial symmetry of $v_m$ in the first variable  we have 
\begin{align*}
\int_{B^{N-k}_1(0)} \int_{B^k_2(0)} \md{v_m(y',y'')}^2dy' dy'' &\geq \int_{B^{N-k}_1(0)} \int_{B^k_1(0)} \md{v_m(y'+\bar{x}_m',y'')}^2dy' dy'' \\
&= \int_{B^{N-k}_1(0)} \int_{B^k_1(\bar{x}_m')} \md{v_m(y',y'')}^2dy' dy''
\geq C>0. 
\end{align*}
Hence by passing to the limit we get 
\begin{align*}
\int_{B^{N-k}_1(0)} \int_{B^k_2(0)} \md{v(y',y'')}^2dy' dy'' \geq C>0,  
\end{align*}
which clearly shows that $v\neq 0.$  
\par In the second case, upto a subsequence $\bar{x}_m' \rightarrow x_0'.$ Then $B_1^k(\bar{x}_m') \subset \Omega$, for sufficiently large open bounded set $\Omega$ in $\rk$. So, using \eqref{com_est2} we get
\begin{align*}
\int_{B_1^{N-k}}\int_\Omega \md{v_m}^2 dy \geq \int_{B_1^{N-k}}\int_{B_1^k(\bar{x}_m')} \md{v_m}^2 dy \geq C > 0.
\end{align*}
Then again as before, passing to the limit we conclude that $v\neq 0.$ The rest of the proof is fairly standard. For the sake of the completeness, let us add the argument. By weak convergence of $v_m$ to $v$ we have 
\begin{align*}
E[v,v]+E[v_m-v,v_m-v] = 1 +o(1),
\end{align*}
as $m\rightarrow \infty.$ Also by scale invariance and Brezis-Lieb lemma \cite{BrLb} we have 
\begin{align*}
S_k(\beta) &= \lim_{m\rightarrow \infty} \int_{\rN} \md{v_m}^{2^*} dy \\
&= \lim_{m\rightarrow \infty}\left[\int_{\rN}\md{v}^{2^*}dy+\int_{\rN} \md{v_m-v}^{2^*}dy\right] \\ &\leq S_k(\beta) \bct{E[v,v]}^{\frac{2^*}{2}} + S_k(\beta) \bct{\lim_{m\rightarrow \infty} E[v_m-v,v_m-v]}^\frac{2^*}{2} \\ &\leq S_k(\beta)\bct{E[v,v]+\lim_{m\rightarrow \infty}E[v_m-v,v_m-v]}^{\frac{2^*}{2}}\leq S_k(\beta).
\end{align*}
Hence we must have equality everywhere in the above estimate. So, $S_{k}(\beta) = \frac{\int_{\rN}\md{v}^{2^*}dy}{\bct{E[v,v]}^\frac{2^*}{2}}.$ 
This proves the Case I, i.e. the existence of maximizer for $t=0.$ \\
\textbf{Case II: $0<t<2s.$} A careful inspection of the preceding proof yields that we only need the inequality \eqref{im_sob2}, which was proved in Proposition \ref{im_sob}, to conclude the proof. 
\end{proof}

\begin{rem}\label{betazero}
In the proof of Theorem \ref{exis_FHSM}, we have essentially used the fact that $\left[.\right]_{s,\rN}$ is an equivalent norm in $\mcal{\dot{H}}^s_\beta(\rN)$ along with the improved Sobolev inequality \eqref{im_sob2}. Hence we remark that, using the same arguments given there, we can conclude, 
$\overline{S}_k^t := \bct{1/S_k^t(0)}^{\frac{2}{2^*_t}}$ is attained by a nontrivial and nonnegative function $u_0 \in \dot{H}^s(\rN)$. Hence, $\kappa_k^t<\overline{S}^t_k.$
\end{rem}
\subsection{Existence of Solution For $\beta =\mcal{C}_{N,k,s}$}\label{crit_space}
\par Firstly, we observe that if $\phi \in C_c^\infty(\rN)$ with $\phi =1$ in $B^N_1(0)$, then $\phi \in \mcal{\dot{H}}^{s,\alpha}(\rN)$ ( See Lemma \ref{rep_sob_sp} of Appendix). Then, by ground state representation \eqref{grndst_rep}, we conclude that $\frac{\phi}{|x'|^\alpha} \in \mcal{\dot{H}}^s_\beta(\rN)$. Since, $\alpha = (k-2s)/2$, so we can see that $\frac{\phi}{|x'|^{\alpha}} \notin \dot{H}^s(\rN)$.  This shows that,  $\mcal{\dot{H}}^s_\beta(\rN)$ is much bigger than $\dot{H}^s(\rN)$. Because of this fact, we were not able to follow the same arguments as in the case of $\beta < \mcal{C}_{N,k,s}$. We have used Ekeland's Variation principle to prove the Theorem \ref{exisb_FHSM}. We start by the following compactness result.
\begin{lem}\label{compact_lem}
Let $\Om$ be a bounded open set in $\rN$. Then the following inclusion is compact.
\begin{align*}
\mcal{\dot{H}}^{s,\alpha}(\rN) \hookrightarrow  L^2\bct{\Om;\frac{1}{|x'|^{2\alpha}}},
\end{align*}
where $L^2\bct{\Om;\frac{1}{|x'|^{2\alpha}}}$ is the set of all measurable function $f:\rN \rightarrow \ro$ such that \\ $\frac{f}{|x'|^{\alpha}}\in L^2(\Om).$
\end{lem}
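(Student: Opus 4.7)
The strategy is to split $\Om$ into a near-singular piece (inside a thin tube around $\{x'=0\}$) and a far-from-singular piece, controlling the first by the weighted Sobolev embedding together with H\"older, and the second by classical fractional Rellich. First, observe that $\mcal{\dot{H}}^{s,\alpha}(\rN)$ is a Hilbert space (its squared norm is a symmetric positive quadratic form on $\rN\times\rN$), so any bounded sequence $\{u_n\}$ admits a subsequence with $u_n\rightharpoonup u$; by Lemma \ref{rep_sob_sp} combined with \eqref{EFHSM_ineq} applied with $t=0$ and $\ta=\alpha$, the same subsequence also converges weakly in $L^{2^*}(\rN;|x'|^{-\alpha 2^*})$. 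Setting $w_n:=u_n-u$, the task reduces to showing $\int_{\Om}w_n^2\,|x'|^{-2\alpha}\,dx\to 0$ along a further subsequence.

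Fix $\epsilon>0$ and decompose $\Om=\Om^\epsilon_-\cup\Om^\epsilon_+$, where $\Om^\epsilon_-:=\Om\cap\{|x'|<\epsilon\}$ and $\Om^\epsilon_+:=\Om\cap\{|x'|\geq\epsilon\}$. On $\Om^\epsilon_-$, H\"older's inequality with conjugate exponents $(2^*/2,\,N/(2s))$ gives
\[
\int_{\Om^\epsilon_-}\frac{w_n^2}{|x'|^{2\alpha}}\,dx\;\leq\;\Bigl(\int_{\rN}\frac{|w_n|^{2^*}}{|x'|^{\alpha\,2^*}}\,dx\Bigr)^{\!2/2^*}\md{\Om^\epsilon_-}^{2s/N}\;\leq\;C\,\epsilon^{2ks/N},
\]
uniformly in $n$: the first factor is bounded by a constant multiple of $[[w_n]]^2_{s,\alpha,\rN}$ via \eqref{EFHSM_ineq}, while $\md{\Om^\epsilon_-}\leq C\epsilon^k$ uses the boundedness of $\Om$. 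For $\Om^\epsilon_+$, the weight satisfies $|x'|^{-2\alpha}\leq\epsilon^{-2\alpha}$, so it suffices to prove $w_n\to 0$ in $L^2(\Om^\epsilon_+)$. Pick a bounded Lipschitz open set $V$ with $\Om^\epsilon_+\subset V\subset\subset\{|x'|>\epsilon/2\}$; on $V\times V$ we have $|x'|^{\alpha}|y'|^{\alpha}\geq(\epsilon/2)^{2\alpha}$, hence
\[
\int_V\int_V\frac{\md{w_n(x)-w_n(y)}^2}{\md{x-y}^{N+2s}}\,dx\,dy\;\leq\;\bigl(\tfrac{\epsilon}{2}\bigr)^{2\alpha}[[w_n]]^2_{s,\alpha,\rN}\;\leq\;C_\epsilon,
\]
and Lemma \ref{rep_sob_sp} together with H\"older likewise bounds $\nrm{w_n}_{L^2(V)}\leq C_\epsilon$. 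Thus $\{w_n\}$ is bounded in the standard fractional Sobolev space $H^s(V)$, and the classical Rellich compactness $H^s(V)\hookrightarrow L^2(V)$, combined with $w_n\rightharpoonup 0$, forces $w_n\to 0$ in $L^2(\Om^\epsilon_+)$ along a further subsequence.

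Passing to $\limsup_n$ in the split yields $\limsup_n\int_\Om w_n^2\,|x'|^{-2\alpha}\,dx\leq C\,\epsilon^{2ks/N}$ for every $\epsilon>0$; letting $\epsilon\to 0$, via a standard diagonal extraction, completes the proof. The main obstacle is the non-integrable singularity of the weight $|x'|^{-2\alpha}$ near the set $\{x'=0\}$, where Rellich alone could fail; this is absorbed uniformly in $n$ by trading the blow-up of the weight against the vanishing Lebesgue measure of a thin tube around the singular set, using the weighted Sobolev inequality \eqref{EFHSM_ineq} derived in Section \ref{frac_ineq}.
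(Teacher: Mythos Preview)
Your proof is correct and follows essentially the same strategy as the paper's: split $\Om$ into a thin tube $\{|x'|<\epsilon\}$ controlled uniformly via H\"older combined with the weighted Sobolev inequality \eqref{EFHSM_ineq}, and a region $\{|x'|\geq\epsilon\}$ where the weight is bounded and the classical fractional Rellich theorem applies. The only cosmetic differences are that the paper uses smooth cutoffs $\phi_\epsilon$ rather than a hard set decomposition, and that once $w_n\rightharpoonup 0$ is fixed, the Rellich step actually gives strong $L^2$ convergence of the \emph{whole} sequence on $\Om^\epsilon_+$ (every subsequence has a sub-subsequence converging to $0$), so no diagonal extraction is needed.
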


\begin{proof}
Let $u\in \mcal{\dot{H}}^{s,\alpha}(\rN).$ Then by H\"older inequality we have 
\begin{align}\label{compact_lem1} 
\int_{\Om}\frac{u^2}{|x'|^{2\alpha}}  &\leq |\Om|^{\frac{2s}{N}}  \bct{\int_{\Om}\frac{|u|^{2^*}}{|x'|^{\alpha2^*}}}^{\frac{2}{2^*}} \notag \\ &\leq \md{\Om}^{\frac{2s}{N}} \frac{1}{\kappa_k^0} \int_{\rN}\int_{\rN}\frac{\md{u(x)-u(y)}^2}{\md{x-y}^{N+2s}|x'|^\alpha|y'|^\alpha} dxdy,
\end{align}
where $\md{\Om}$ denotes the $N-$dimensional Lebesgue measure of $\Om$. Hence, the inclusion, $\mcal{\dot{H}}^{s,\alpha}(\rN) \hookrightarrow  L^2\bct{\Om;\frac{1}{|x'|^{2\alpha}}}$ is continuous. To prove the compactness, it is enough to show that $u_m \rightarrow 0$ in $L^2(\Om,\frac{1}{|x'|^{2\alpha}})$, whenever $u_m \in \mcal{\dot{H}}^{s,\alpha}(\rN)$ and $u_m$ converges to zero weakly in $\mcal{\dot{H}}^{s,\alpha}(\rN)$, as well as in $L^2(\Om,\frac{1}{|x'|^{2\alpha}})$. To prove this, we consider, $\phi_\epsilon \in C^\infty(\rk; [0,1])$ such that  
\begin{align*}
\phi_\epsilon = \begin{cases}0, \text{ if } |x'|<\epsilon\\ 1, \text{ if } |x'| >2\epsilon.
\end{cases}
\end{align*}
and define $\Om_\epsilon : = \Om\cap B^k_\epsilon(0)$. Without loss of generality, we could assume that $\Om\setminus \Om_\epsilon$ is a domain with Lipschitz boundary.  Also, let $M>0$ be such that $\left[\left[u_m\right]\right]^2_{s,\tilde{\alpha},\rN} < M$  and $\nrm{\frac{u_m}{|x'|^\alpha}}_{L^2(\Om)}<M$. Now, note that, 
\begin{align*}
\int_{\Om\setminus \Om_\epsilon}\int_{\Om \setminus \Om_\epsilon} \frac{\md{u_m(x)-u_m(y)}^2}{|x-y|^{N+2s}}dx dy &\leq C  \int_{\Om\setminus \Om_\epsilon}\int_{\Om \setminus \Om_\epsilon} \frac{\md{u_m(x)-u_m(y)}^2}{|x-y|^{N+2s}|x'|^\alpha|y'|^\alpha}dx dy \leq C \\ \text{and } \int_{\Om\setminus \Om_\epsilon} |u_m|^2 dx &\leq  C\int_{\Om} \frac{u_m^2}{|x'|^{2\alpha}} \leq C,
\end{align*}
where (and for the rest of the proof) $C>0$ is a constant depending on $N,s,k, \Om$ and $M$. So, $u_m$ is bounded sequence in $H^s(\Om\setminus \Om_\epsilon)$. Hence, by compactness, (see \cite{NPV}) $u_m \rightarrow u$ in $L^2(\Om\setminus\Om_\epsilon)$. By weak convergence of $u_m$ to $0$ in $L^2(\Om;\frac{1}{|x'|^{2\alpha}})$ we conclude that $u=0$ and  
\begin{align*}
\int_{\Om} \frac{\phi^2_\epsilon u^2_m}{|x'|^{2\alpha}} dx = \frac{1}{\epsilon^{2\alpha}} o(1), \text{ as } m \rightarrow \infty.
\end{align*}
Also, by \eqref{compact_lem1}
\begin{align*}
\int_{\Om} \md{(1-\phi_{\epsilon})u_m}^2\frac{dx}{|x'|^{2\alpha}} \leq C \md{\Om_\epsilon}^{\frac{2s}{N}}.
\end{align*}
Since,
\begin{align*}
\int_{\Om} u_m^2 \frac{dx}{|x'|^{2\alpha}} \leq C\left[\int_{\Om} \frac{\phi^2_\epsilon u^2_m}{|x'|^{2\alpha}} dx  +\int_{\Om} \md{(1-\phi_{\epsilon})u_m}^2\frac{dx}{|x'|^{2\alpha}}\right],
\end{align*}
so first letting $m\rightarrow \infty$ and then letting $\epsilon \rightarrow 0$ in the last inequality, we see that $u_m\rightarrow 0$ in $L^2(\Om;\frac{1}{|x'|^{2\alpha}})$.  This proves the lemma.

\end{proof}
Next, we need the following lemma.
\begin{lem}\label{lwr_ord_est}
Let $\{u_m\}$ be a bounded sequence in $\mcal{\dot{H}}^{s,\alpha}(\rN)$ such that $u_m \rightharpoonup 0$. Then for any $\psi \in C_c^\infty(\rN)$ with $\operatorname{supp}\psi \subset B_{R_0}^N(0)$, for some $R_0>0$, we have 
\begin{align}\label{lwr_ord_est1}
&\int_{\rN}\int_{\rN} \frac{\md{(\psi u_m)(x)-(\psi u_m)(y)}^2}{|x-y|^{N+2s}|x'|^\alpha|y'|^\alpha}dx dy \notag\\&\leq  \int_{\rN}\int_{\rN} \frac{\bct{(\psi^2 u_m)(x)-(\psi^2 u_m)(y)}\bct{u_m(x)-u_m(y)}}{|x-y|^{N+2s}|x'|^\alpha|y'|^\alpha}dx dy +\sigma_{R,m} +\tilde{\sigma}_R,
\end{align}
for any $R>R_0+2$ and $m\in \mathbb{N}$. Here, for each $R$, $\displaystyle{\lim_{m\rightarrow \infty}}\sigma_{R,m} =0$ and $\displaystyle{\lim_{R\rightarrow \infty}} \tilde{\sigma}_R =0$.
\end{lem}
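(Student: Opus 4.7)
The starting point is the pointwise algebraic identity
\begin{align*}
\md{(\psi u)(x)-(\psi u)(y)}^2 = \bct{(\psi^2 u)(x)-(\psi^2 u)(y)}\bct{u(x)-u(y)} + u(x)u(y)\bct{\psi(x)-\psi(y)}^2,
\end{align*}
which is immediate by expanding both sides. Dividing by $\md{x-y}^{N+2s}\md{x'}^\alpha\md{y'}^\alpha$ and integrating, the claim of the lemma is equivalent to a bound $E_m \le \sigma_{R,m}+\tilde\sigma_R$ for the cross term
\begin{align*}
E_m := \int_{\rN}\int_{\rN}\frac{u_m(x)u_m(y)\bct{\psi(x)-\psi(y)}^2}{\md{x-y}^{N+2s}\md{x'}^\alpha\md{y'}^\alpha}\,dxdy,
\end{align*}
so it suffices to prove $\md{E_m}\le \sigma_{R,m}+\tilde\sigma_R$ with the stated limit behavior.

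To handle $\md{E_m}$ I would apply Cauchy--Schwarz on $L^2(\rN\times\rN)$ with the factorization
\begin{align*}
a(x,y)=\frac{u_m(x)\md{\psi(x)-\psi(y)}}{\md{x'}^\alpha\md{x-y}^{(N+2s)/2}},\qquad b(x,y)=\frac{u_m(y)\md{\psi(x)-\psi(y)}}{\md{y'}^\alpha\md{x-y}^{(N+2s)/2}}.
\end{align*}
Fubini together with the $x\leftrightarrow y$ symmetry then gives the key reduction
\begin{align*}
\md{E_m}\le \int_{\rN}\frac{u_m^2(x)\,K(x)}{\md{x'}^{2\alpha}}\,dx,\quad K(x):=\int_{\rN}\frac{\bct{\psi(x)-\psi(y)}^2}{\md{x-y}^{N+2s}}\,dy.
\end{align*}
The crucial feature of this step is that it converts the ``mixed'' weight $\md{x'}^{-\alpha}\md{y'}^{-\alpha}$ into the single weight $\md{x'}^{-2\alpha}$, which is exactly the one appearing in Lemma \ref{compact_lem}. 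Standard splits of the $y$-integral into $\md{x-y}\lessgtr 1$, using $\md{\psi(x)-\psi(y)}\le \nrm{\nabla\psi}_\infty \md{x-y}$ on the near part and $\md{\psi(x)-\psi(y)}\le 2\nrm{\psi}_\infty$ on the far part, show $K\in L^\infty(\rN)$; moreover, for $\md{x}>R>R_0+2$ one has $\psi(x)=0$ and $\md{x-y}\ge c(R_0)\md{x}$ for $y\in\operatorname{supp}\psi$, yielding the decay $K(x)\le C(\psi,R_0)(1+\md{x})^{-(N+2s)}$.

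I would finish by splitting the $x$-integral into $\int_{B^N_R(0)}+\int_{(B^N_R(0))^c}$. On $B^N_R(0)$ the bound $K\le \nrm{K}_\infty$ gives
\begin{align*}
\int_{B^N_R(0)}\frac{u_m^2(x)K(x)}{\md{x'}^{2\alpha}}\,dx \le \nrm{K}_\infty \int_{B^N_R(0)}\frac{u_m^2(x)}{\md{x'}^{2\alpha}}\,dx =:\sigma_{R,m},
\end{align*}
and $\sigma_{R,m}\to 0$ as $m\to\infty$ by the compact embedding of Lemma \ref{compact_lem} combined with $u_m\rightharpoonup 0$ in $\mcal{\dot{H}}^{s,\alpha}(\rN)$. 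On the exterior, H\"older with conjugate exponents $2^*/2$ and $N/(2s)$ gives
\begin{align*}
\int_{(B^N_R(0))^c}\frac{u_m^2 K}{\md{x'}^{2\alpha}}\,dx \le \bct{\int_{\rN}\frac{\md{u_m}^{2^*}}{\md{x'}^{\alpha 2^*}}}^{\!2/2^*}\bct{\int_{(B^N_R(0))^c}K^{N/(2s)}\,dx}^{\!2s/N}=:\tilde\sigma_R,
\end{align*}
where the first factor is bounded uniformly in $m$ by the Sobolev-type inequality \eqref{EFHSM_ineq} (with $t=0$) together with boundedness of $\{u_m\}$ in $\mcal{\dot{H}}^{s,\alpha}(\rN)$, and the decay of $K$ gives the second factor bounded by $CR^{-N}\to 0$ as $R\to\infty$. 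The main technical delicacy is precisely the Cauchy--Schwarz symmetrization step: without it the local term would still carry the ``off-diagonal'' weight $\md{x'}^{-\alpha}\md{y'}^{-\alpha}$ and could not be estimated by the compact embedding of Lemma \ref{compact_lem}.
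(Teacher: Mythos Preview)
Your proof is correct and follows essentially the same line as the paper's: the same algebraic identity, the same symmetrization step (your Cauchy--Schwarz factorization and the paper's elementary inequality $2ab\le a^{2}+b^{2}$ with $a=\frac{|u_m(x)|}{|x'|^{\alpha}}$, $b=\frac{|u_m(y)|}{|y'|^{\alpha}}$ produce the identical bound $\md{E_m}\le\int_{\rN}\frac{u_m^2(x)K(x)}{|x'|^{2\alpha}}\,dx$), and the same interior/exterior split with Lemma~\ref{compact_lem} controlling the piece on $B_R^N(0)$. The only cosmetic difference is the exterior estimate: you exploit the pointwise decay $K(x)\le C(1+|x|)^{-(N+2s)}$ and a single H\"older step, whereas the paper first swaps the order of integration and then applies H\"older inside together with dominated convergence; both variants are routine.
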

\begin{proof}
Since, $R>R_0$, so, $\operatorname{supp} \psi \subset B^N_{R-2}(0)$. Now, note that,
\begin{align*}
&\bct{(\psi^2 u_m)(x)-(\psi^2 u_m)(y)}\bct{u_m(x)-u_m(y)}\\ &= \md{(\psi u_m)(x)-(\psi u_m)(y)}^2 - u_m(x)u_m(y) \md{\psi(x)-\psi(y)}^2.
\end{align*}
So, using $2ab\leq (a^2+b^2)$, with $a= \frac{|u_m(x)|}{|x'|^{\alpha}}$ and $b= \frac{|u_m(y)|}{|y'|^{\alpha}}$, we estimate
\begin{align}\label{lwr_ord_est2}
I_m &:= \md{\int_{\rN}\int_{\rN} \frac{u_m(x)u_m(y) \md{\psi(x)-\psi(y)}^2}{|x-y|^{N+2s}|x'|^\alpha|y'|^\alpha}dxdy}\notag\\  &\leq \int_{\rN}\frac{u_m^2(x)}{|x'|^{2\alpha}}\int_{\rN} \frac{\md{\psi(x)-\psi(y)}^2}{|x-y|^{N+2s}}dy dx \leq I_{m,1}+I_{m,2}
\end{align}
where 
\begin{align*}
I_{m,1} &:= \int_{|x|<R} \frac{u_m^2(x)}{|x'|^{2\alpha}}\int_{\rN} \frac{\md{\psi(x)-\psi(x-y)}^2}{|y|^{N+2s}} dy dx\text{ and } \\I_{m,2}&:= \int_{|x|>R}\frac{u_m^2(x)}{|x'|^{2\alpha}}\int_{\rN} \frac{\md{\psi(x)-\psi(x-y)}^2}{|y|^{N+2s}}dydx.
\end{align*}
Now, since 
\begin{align*}
\int_{\rN} \frac{\md{\psi(x)-\psi(x-y)}^2}{|y|^{N+2s}}dy \leq C\left[\nrm{\nabla \psi}_{L^\infty(\rN)} + \nrm{\psi}_{L^\infty(\rN)}\right],
\end{align*}
for some constant $C>0$ depending only on $N$, so using Lemma \eqref{compact_lem} we conclude that, for each $R>0$, $I_{m,1} = \sigma_{R,m}$, where $\displaystyle{\lim_{m\rightarrow \infty}}\sigma_{R,m} =0.$ To estimate $I_{m,2},$ we notice that, if $|x|>R$, $R>R_0+2$ and $|y|<1$ then $\psi(x)= \psi(x-y)=0$ Hence,
\begin{align}\label{lwr_ord_est3}
I_{m,2} &= \int_{|x|>R} \frac{u_m^2(x)}{|x'|^{2\alpha}}\int_{|y|>1} \frac{\md{\psi(x-y)}^2}{|y|^{N+2s}} dy dx \notag\\ &= \int_{|y|>1}\frac{1}{|y|^{N+2s}} \int_{|x|>R} \frac{u_m^2(x)}{|x'|^{2\alpha}} |\psi(x-y)|^2 dx dy
\end{align}
First using H\"oder inequality, then using \eqref{EFHSM_ineq} and boundedness of $u_m$ in $\mcal{\dot{H}}^{s,\alpha}(\rN)$, we get from \eqref{lwr_ord_est3}
\begin{align*}
I_{m,2} \leq C\int_{|y|>1} \frac{F_R(y)}{|y|^{N+2s}} dy,
\end{align*}
where, $F_R(y) : = \bct{\int_{|x|>R}\md{\psi(x-y)}^{N/s}dx}^{2s/N}$. Then using Dominated Convergence Theorem we have $I_{m,2} \leq \tilde{\sigma}_R$, for all $m\in \mathbb{N}$ and $\displaystyle{\lim_{R\rightarrow \infty}} \tilde{\sigma}_R =0.$ Hence, using \eqref{lwr_ord_est2} we conclude the lemma.
\end{proof}
Next, lemma shows that if the interior concentration happens, then it will happen away from the singular set.
\begin{prop}\label{con_lem}
Let $\{u_m\}$ be a nonnegative bounded sequence in $\mcal{\dot{H}}^{s,\alpha}(\rN)$ such that 

\begin{itemize}
\item[(i)] $\int_{\rN} \frac{\md{u_m}^{2^*_t}}{|x'|^{t+\alpha2^*_t}} dx = \bct{\kappa_k^t}^\frac{2^*_t}{2^*_t-2} $ \\ 
\item[(ii)] $L^s_\alpha u_m =\frac{\md{u_m}^{2^*_t-2}u_m}{|x'|^{t+\alpha2^*_t}} +f_m$, weakly in $\mcal{\dot{H}}^{s,\alpha}(\rN)$,
\end{itemize}
where $f_m\rightarrow 0$ in the dual of $\mcal{\dot{H}}^{s,\alpha}(\rN)$. If for any, $R_m>0$ and $\eta_m\in \rNk$, \\ $\bar{u}_m(x',x''): = R_m^{(k-N)/2}u_m(x'R_m^{-1},x''R_m^{-1}+\eta_m)$ converges to zero weakly in $\mcal{\dot{H}}^{s,\alpha}(\rN)$, then there exist $R_m>0$ and $\eta_m\in \rNk$ such that 
\begin{align*}
\liminf\limits_{m\rightarrow\infty} \int_{\Om} \frac{\md{\bar{u}_m}^{2^*_t}}{|x'|^{t+\alpha2^*_t}} dx>0, 
\end{align*}
 where $\Om: = \{(x',x'')\in\rN: \frac{1}{2}<|x'|<1, \ |x'|<1\}$.

\end{prop}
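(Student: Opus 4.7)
The plan is to argue by contradiction. Set $\rho_m := |u_m|^{2^*_t}/|x'|^{t + \alpha 2^*_t}$ and introduce the shell-mass concentration function
\begin{align*}
\Psi_m(r, \eta) := \int_{\{r/2 < |x'| < r\}\, \cap\, \{|x'' - \eta| < r\}} \rho_m(x)\, dx,
\end{align*}
for $r > 0$ and $\eta \in \rNk$. A direct change of variables $y' = x' R_m^{-1}$, $y'' = x'' R_m^{-1} + \eta_m$ — the same one that makes $\left[\left[\cdot\right]\right]_{s, \alpha, \rN}$ and the weighted $L^{2^*_t}$ integral scale-invariant — yields the identity
\begin{align*}
\int_{\Om} \frac{|\bar u_m|^{2^*_t}}{|x'|^{t + \alpha 2^*_t}}\, dx = \Psi_m\!\left(R_m^{-1},\, \eta_m\right),
\end{align*}
so the conclusion reduces to producing $r_m > 0,\ \zeta_m \in \rNk$ with $\liminf_m \Psi_m(r_m, \zeta_m) > 0$, after which one sets $R_m := r_m^{-1}$ and $\eta_m := \zeta_m$.

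Suppose for contradiction that no such sequences exist; equivalently, $\sup_{r, \eta} \Psi_m(r, \eta) \to 0$ along a subsequence. Testing the weak equation (ii) against $u_m$, using (i) and $f_m \to 0$ in the dual, gives $\left[\left[u_m\right]\right]_{s, \alpha, \rN}^2 = \int_{\rN} \rho_m\, dx + o(1)$, so $\left[\left[u_m\right]\right]_{s, \alpha, \rN}$ is bounded. The main step is then to establish a local-to-global inequality of the form
\begin{align*}
\int_{\rN} \rho_m\, dx \leq C\, \left[\left[u_m\right]\right]_{s, \alpha, \rN}^{\theta_1}\, \Bigl(\sup_{r > 0,\, \eta \in \rNk} \Psi_m(r, \eta)\Bigr)^{\theta_2}
\end{align*}
with positive exponents $\theta_1, \theta_2$; this would force $\int \rho_m \to 0$, contradicting (i). The inequality is a weighted cylindrical analogue of the Palatucci-Pisante improved Sobolev inequality of Proposition \ref{im_sob}. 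I would prove it by decomposing $\rN_k$ into a dyadic family of annular pieces $A_{j, \ell} = \{2^{j-1} < |x'| < 2^j\} \times Q_\ell^j$, where $Q_\ell^j$ is a cube of side $2^j$ in $\rNk$; applying the weighted Hardy-Sobolev-Maz'ya inequality \eqref{EFHSM_ineq} on each piece; interpolating by H\"older between $L^2(A_{j,\ell};|x'|^{-2\alpha})$ and $L^{2^*}(A_{j,\ell};|x'|^{-\alpha 2^*})$ to bring in the "local" factor $\Psi_m$; and summing dyadically using the boundedness of $\left[\left[u_m\right]\right]_{s, \alpha, \rN}$.

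The weak-vanishing hypothesis on $\bar u_m$ identifies the regime where the proposition is genuinely informative: if some fixed rescaling produced a nontrivial weak limit, then Lemma \ref{compact_lem} combined with interpolation on the bounded set $\Om$ (where the singular weights are bounded, so compact embedding into $L^2$ plus boundedness in $L^{2^*}$ give strong $L^{2^*_t}$ convergence) would immediately place mass in $\Om$. Under the hypothesis, the only available concentration is a moving or escaping one, and $\Psi_m$ is precisely the scale-invariant device that tracks such mass — so the $r_m,\zeta_m$ produced by the contradiction argument genuinely capture information not already available by weak compactness.

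The main technical obstacle is the derivation of the weighted improved Sobolev inequality. The Palatucci-Pisante argument in the unweighted setting uses refined Morrey controls and Besov-type interpolation; adapting it to $\mcal{\dot{H}}^{s, \alpha}(\rN)$, where the weights $|x'|^{-\alpha}$ and $|x'|^{-t - \alpha 2^*_t}$ are singular along the entire $(N-k)$-dimensional subspace $\{x' = 0\}$, requires handling the shells $A_{j, \ell}$ uniformly both as $j \to -\infty$ (towards the singularity) and as $j \to +\infty$ (towards infinity), so that the dyadic sum of local bounds closes up to a finite, scale-invariant global bound.
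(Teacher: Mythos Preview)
Your route is genuinely different from the paper's, and the difference is exactly where the gap lies.

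The paper does \emph{not} try to prove a weighted Palatucci--Pisante inequality for $\mathcal{\dot H}^{s,\alpha}(\rN)$. Instead it fixes the scale via a L\'evy concentration function on \emph{full} cylinders $B_r^k(0)\times B_r^{N-k}(\eta)$ (not annuli): one chooses $R_m,\eta_m$ so that the rescaled sequence $\bar u_m$ carries exactly half of the total weighted mass in $B_1^k(0)\times B_1^{N-k}(0)$. Then, crucially using both (ii) and the weak-to-zero hypothesis, one tests the rescaled equation against $\psi_j^2\bar u_m$ for finitely many cut-offs $\psi_j$ supported in $B_1^k(0)\times B_1^{N-k}(\eta_j)$; Lemma~\ref{lwr_ord_est} and Lemma~\ref{compact_lem} kill the lower-order terms, and the resulting inequality together with \eqref{EFHSM_ineq} and the strict bound $\kappa_k^t/2^{(2^*_t-2)/2^*_t}<\kappa_k^t$ forces the weighted mass in $B_{1/2}^k(0)\times B_1^{N-k}(0)$ to vanish. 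Since the full cylinder carries half the mass, the annulus $\Omega$ must carry it.

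Your proposal would, if it worked, yield a stronger, equation-free statement: you discard (ii) and the weak-to-zero hypothesis and rely entirely on a weighted improved Sobolev inequality
\[
\int_{\rN}\rho_m\;\le\;C\,[[u_m]]_{s,\alpha,\rN}^{\theta_1}\Bigl(\sup_{r,\eta}\Psi_m(r,\eta)\Bigr)^{\theta_2}.
\]
But this inequality is the whole difficulty, and your dyadic sketch does not close it. On each annular block $A_{j,\ell}$ the seminorm $[[\cdot]]_{s,\alpha}$ is nonlocal, so applying \eqref{EFHSM_ineq} ``on each piece'' produces cross-interaction terms between different $A_{j,\ell}$ that must be summed uniformly in $j$ (both $j\to-\infty$ and $j\to+\infty$); you give no mechanism for controlling these tails. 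The paper itself explicitly notes that it could \emph{not} use Proposition~\ref{im_sob} in the critical space $\mathcal{\dot H}^{s,\alpha}$ and devised the PDE-based argument precisely to sidestep this issue. In short, the step you call ``the main technical obstacle'' is not a detail but the entire content of the proposition in this setting, and the paper's proof shows how the extra structure from (ii) and the weak-to-zero hypothesis substitutes for the missing improved Sobolev control.
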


\begin{proof}
 We can choose $\eta_m \in \rNk$ and $R_m>0$ such that
\begin{align}\label{con_lem1}
Q_m(1):= \displaystyle{\sup_{\eta \in \rNk}} 
\int_{B_1^k(0)\times B_1^{N-k}(\eta)} \frac{\md{\bar{u}_m}^{2^*_t}}{|x'|^{t+2^*_t}}dx &= \int_{B_1^k(0)\times B_1^{N-k}(\eta)} \frac{\md{\bar{u}_m}^{2^*_t}}{|x'|^{t+2^*_t}}dx \notag\\&=\frac{ \bct{\kappa_k^t}^\frac{2^*_t}{2^*_t-2}}{2}.
\end{align}
 Then, clearly, $\{\bar{u}_m\}$ satisfies 
 \begin{align}\label{con_lem2}
 L^s_\alpha\bar{u}_m = \frac{\md{\bar{u}_m}^{2^*_t-2}\bar{u}_m}{|x'|^{t+\alpha2^*_t}} + \bar{f}_m, \text{ weakly in } \mcal{\dot{H}}^{s,\alpha}(\rN),
 \end{align}
where $\bar{f}_m\rightarrow 0$ in the dual of $\mcal{\dot{H}}^{s,\alpha}(\rN)$.
Now, there exist $N_0 \in \mathbb{N}$ and $\eta_1, \dots, \eta_{N_0}\in \rNk$ such that  $\overline{B_1^{N-k}(0)} \subset\displaystyle{\cup_{j=1}^{N_0}}B^{N-k}_{1/2}(\eta_j)$. Let $\psi_1,\dots,\psi_{N_0}\in C_c^\infty(\rN;[0,1])$ be such that 
\begin{align*}
\psi_j &= 1, \text{ on } B_{\frac{1}{2}}^k(0)\times B_{\frac{1}{2}}^{N-k}(\eta_j) \text{ and }  \operatorname{supp} \psi_j \subset B_{1}^k(0)\times B_1^{N-k}(\eta_j).
\end{align*}
 Now, using  $\psi_j = 1$, on  $B_{\frac{1}{2}}^k(0)\times B_{\frac{1}{2}}^{N-k}(\eta_j)$, translation invariance of norms in the second variable (i.e. $x''$) and employing the same calculations as in the Lemma \ref{cpt_apprx}, we can assert that $\psi_j^2\bar{u}_m$ is a bounded sequence in $\mcal{\dot{H}}^{s,\alpha}(\rN)$.  Hence using $\psi_j^2\bar{u}_m$, as a test function in \eqref{con_lem2} we arrive at
 \begin{align}\label{con_lem3}
 \int_{\rN}\int_{\rN} \frac{\bct{(\psi_j^2 \bar{u}_m)(x)-(\psi_j^2 \bar{u}_m)(y)}\bct{\bar{u}_m(x)-\bar{u}_m(y)}}{|x-y|^{N+2s}|x'|^\alpha|y'|^\alpha}dx dy = \int_{\rN} \frac{\md{\bar{u}_m}^{2^*_t-2}\bar{u}_m^2\psi_j^2}{|x'|^{t+\alpha2^*_t}} + \bar{\sigma}_m,
 \end{align}
where $\bar{\sigma}_m \rightarrow 0$ as $m \rightarrow \infty$. Since $\bar{u}_m \rightharpoonup 0$ in $\mcal{\dot{H}}^{s,\alpha}(\rN)$, so, using Lemma \ref{lwr_ord_est}, \eqref{con_lem3} and H\"older inequlaity we conclude that 
\begin{align}\label{con_lem4}
&\int_{\rN}\int_{\rN} \frac{\md{(\psi_j \bar{u}_m)(x)-(\psi_j \bar{u}_m)(y)}^2}{|x-y|^{N+2s}|x'|^\alpha|y'|^\alpha}dx dy \notag\\&\leq \bct{\int_{B_1^k(0)\times B_1^{N-k}(\eta_j)}\frac{\md{\bar{u}_m}^{2^*_t}}{|x'|^{t+\alpha2^*_t}}}^{\frac{2^*_t-2}{2^*_t}}\bct{\int_{\rN}\frac{\md{\psi_j\bar{u}_m}^{2^*_t}}{|x'|^{t+\alpha2^*_t}}}^{\frac{2}{2^*_t}} + \bar{\sigma}_m +\sigma_{R,m} +\tilde{\sigma}_R,
\end{align}
where for any large $R$ (depending only on $j$) $\displaystyle{\lim_{m\rightarrow \infty}}\sigma_{R,m} =0$, $\displaystyle{\lim_{R\rightarrow \infty}} \tilde{\sigma}_R =0$ and $\displaystyle{\lim_{m\rightarrow \infty}} \bar{\sigma}_m = 0$.  Hence, using \eqref{EFHSM_ineq}, \eqref{con_lem1}  and \eqref{con_lem4} we have
 \begin{align*}
\kappa_k^t \bct{\int_{\rN}\frac{\md{\psi_j\bar{u}_m}^{2^*_t}}{|x'|^{t+\alpha2^*_t}}}^\frac{2}{2^*_t} \leq  \frac{\kappa_k^t}{2^{\frac{2^*_t-2}{2^*_t}}}\bct{\int_{\rN}\frac{\md{\psi_j\bar{u}_m}^{2^*_t}}{|x'|^{t+\alpha2^*_t}}}^\frac{2}{2^*_t}+ \sigma_{R,m} +\tilde{\sigma}_R + \bar{\sigma}_m.
 \end{align*}
First letting, $m \rightarrow \infty$, then letting $R\rightarrow \infty$  we conclude that for any $j = 1,\dots,N_0$
\begin{align*}
\displaystyle{\lim_{m\rightarrow \infty}}\int_{\rN}\frac{\md{\psi_j\bar{u}_m}^{2^*_t}}{|x'|^{t+\alpha2^*_t}}dx = 0.
\end{align*}
 Hence 
 \begin{align*}
 \int_{B_{\frac{1}{2}}^k(0)\times B_1^{N-k}(0)}\frac{\md{\bar{u}_m}^{2^*_t}}{|x'|^{t+\alpha2^*_t}}dx \leq  \displaystyle{\sum_{j=1}^{N_0}} \int_{B_{\frac{1}{2}}^k(0)\times B_{\frac{1}{2}}^{N-k}(\eta_j)}\frac{\md{\bar{u}_m}^{2^*_t}}{|x'|^{t+\alpha2^*_t}}dx \rightarrow 0, \text{ as } m\rightarrow \infty.
 \end{align*}
Finally, using \eqref{con_lem1} we conclude the Proposition. 
\end{proof}
\subsection{Proof of Theorem \ref{exisb_FHSM}}
\begin{proof}
We can take a nonnegative minimizing sequence $\{u_m\}$ in $\mcal{\dot{H}}^{s,\alpha}(\rN)$ satisfying the following.
\begin{itemize}
\item[(i)] $\int_{\rN} \frac{\md{u_m}^{2^*_t}}{|x'|^{t+\alpha2^*_t}} dx = \bct{\kappa_k^t}^{\frac{2^*_t}{2^*_t-2}}$,
\item[(ii)] $\left[\left[u_m\right]\right]^2_{s,\alpha,\rN} = \bct{\kappa_k^t}^{\frac{2^*_t}{2^*_t-2}}$ + o(1), as $m\rightarrow \infty$,
\item[(iii)]  $L^s_\alpha u_m = \frac{\md{u_m}^{2^*_t-2}u_m}{|x'|^{t+\alpha2^*_t}} +f_m$  weakly in $\mcal{\dot{H}}^{s,\alpha}(\rN)$, 
\end{itemize}
where (iii) is a consequence of Ekeland's Variation Principle and $f_m \rightarrow 0$ in the dual of $\mcal{\dot{H}}^{s,\alpha}(\rN)$. We claim that, there exist $R_m>0$ and $\eta_m\in \rNk$, such that upto a subsequence, $\bar{u}_m \rightharpoonup u \neq 0$ in $\mcal{\dot{H}}^{s,\alpha}(\rN),$ where, $\bar{u}_m$ is defined as in Proposition \ref{con_lem}. Once the claim is established, we can argue similarly as in  the proof of Theorem \ref{exis_FHSM} to complete the Theorem.
If possible, let us assume, that the claim is false.  Then Proposition \ref{con_lem} guarantees the existence of $R_m>0$ and $\eta_m\in \rNk$ such that $\bar{u}_m \rightharpoonup 0$ in $\mcal{\dot{H}}^{s,\alpha}(\rN)$, 
\begin{align*}
&\displaystyle{\lim_{m \rightarrow \infty}} \int_{\Om} \frac{\md{\bar{u}_m}^{2^*_t}}{|x'|^{t+\alpha2^*_t}} dx >0 \text{ and }\\ L^s_\alpha \bar{u}_m &= \frac{\md{\bar{u}_m}^{2^*_t-2}\bar{u}_m}{|x'|^{t+\alpha2^*_t}} +\bar{f}_m \text{ weakly in }  \mcal{\dot{H}}^{s,\alpha}(\rN),
\end{align*}
where $\bar{f}_m \rightarrow 0$ in the dual of $\mcal{\dot{H}}^{s,\alpha}(\rN)$ and $\Om: = \{(x',x'')\in\rN: \frac{1}{2}<|x'|<1, \ |x'|<1\}$.  Let $\psi \in C_c^\infty (\rN_k,[0,1])$, such that $\psi = 1$ in $\Om$. Since, $\psi$ is supported away from $\{x'=0\}$ so, we can perform a similar calculation, given in Lemma \ref{cpt_apprx} to conclude $\{\psi^2 \bar{u}_m\}$ is a bounded sequence in $\mcal{\dot{H}}^{s,\alpha}(\rN)$. Then, proceeding similarly as in the  proof of Proposition \ref{con_lem}, we can derive 
\begin{align}\label{exisb_FHSM1}
&\int_{\rN}\int_{\rN} \frac{\md{(\psi \bar{u}_m)(x)-(\psi \bar{u}_m)(y)}^2}{|x-y|^{N+2s}|x'|^\alpha|y'|^\alpha}dx dy \notag\\&\leq \bct{\int_{\rN}\frac{\md{\bar{u}_m}^{2^*_t}}{|x'|^{t+\alpha2^*_t}}}^{\frac{2^*_t-2}{2^*_t}}\bct{\int_{\rN}\frac{\md{\psi\bar{u}_m}^{2^*_t}}{|x'|^{t+\alpha2^*_t}}}^{\frac{2}{2^*_t}} + \bar{\sigma}_m +\sigma_{R,m} +\tilde{\sigma}_R,\notag \\ &\leq \kappa_k^t\bct{\int_{\rN}\frac{\md{\psi\bar{u}_m}^{2^*_t}}{|x'|^{t+\alpha2^*_t}}}^{\frac{2}{2^*_t}} +  \bar{\sigma}_m +\sigma_{R,m} +\tilde{\sigma}_R,
\end{align}
where for any large $R$ (depending only on the support of $\psi$) $\displaystyle{\lim_{m\rightarrow \infty}}\sigma_{R,m} =0$, $\displaystyle{\lim_{R\rightarrow \infty}} \tilde{\sigma}_R =0$ and $\displaystyle{\lim_{m\rightarrow \infty}} \bar{\sigma}_m = 0.$ Now, let us define $v_m := \frac{\psi\bar{u}_m}{|x'|^\alpha}$. Then by \eqref{grndst_rep}  and \eqref{exisb_FHSM1} we have 
\begin{align*}
\left[v_m\right]^2_{s,\rN} - \mcal{C}_{N,k,s} \int_{\rN}\frac{v^2_m}{|x'|^{2s}}  \leq \kappa_k^t\bct{\int_{\rN}\frac{\md{\psi\bar{u}_m}^{2^*_t}}{|x'|^{t+\alpha2^*_t}}}^{\frac{2}{2^*_t}} +  \bar{\sigma}_m +\sigma_{R,m} +\tilde{\sigma}_R.
\end{align*}
Since, $\psi$ is supported in $\rN_k$, using Lemma \ref{compact_lem} and inequality \eqref{FHSM_ineq} for $\beta = 0$, we have 
\begin{align}\label{exisb_FHSM2}
\overline{S}_k^t \bct{\int_{\rN}\frac{\md{\psi\bar{u}_m}^{2^*_t}}{|x'|^{t+\alpha2^*_t}}}^{\frac{2}{2^*_t}} \leq \kappa_k^t\bct{\int_{\rN}\frac{\md{\psi\bar{u}_m}^{2^*_t}}{|x'|^{t+\alpha2^*_t}}}^{\frac{2}{2^*_t}} +  \bar{\sigma}_m +\sigma_{R,m} +\tilde{\sigma}_R.
\end{align}
As pointed out in Remark \ref{betazero}, $\kappa_k^t< \overline{S}^t_k$. Hence, from \eqref{exisb_FHSM2} we conclude that 
\begin{align*}
\displaystyle{\lim_{m\rightarrow \infty}} \int_{\rN}\frac{\md{\psi\bar{u}_m}^{2^*_t}}{|x'|^{t+\alpha2^*_t}} = 0,
\end{align*}
which contradicts Proposition \ref{con_lem}. This proves the Theorem.
\end{proof}

\section{Qualitative Properties of Solution}\label{reg}
Main goal of this section is to prove Theorem \ref{reg_flap_cyl}. Following the ideas of \cite{AbMePe}, using \eqref{grndst_rep}, we could hide the singular weight in the operator. Therefore, working with the newly defined operator and using Moser iteration technique, we can prove the asymptotic estimate \eqref{asy_est}, given in Theorem \ref{reg_flap_cyl}. On the other hand, to prove the regularity of solution, we have used extension technique introduced by Cafarelli and Silvestre in \cite{CS}.

The proof of inequality \eqref{asy_est} is based on  Moser iteration technique

\subsection{$L^\infty$ Estimates of Solutions} 
In this subsection, we will prove some $L^\infty$ estimates of solution of \eqref{flap_cyl}, which seems  inevitable to give an unified proof of the cylindrical symmetry of solutions. We start by introducing the operator $L_{\tilde{\alpha}}^s$, which is defined on $\mcal{\dot{H}}^{s,\tilde{\alpha}}(\rN)$ by the following inner product :
\begin{align}\label{sing_flap}
\left<L^s_{\tilde{\alpha}}v, \phi\right> : = \int_{\rN}\int_{\rN}\frac{\bct{v(x)-v(y)}\bct{\phi(x)-\phi(y)}}{\md{x-y}^{N+2s}|x'|^{\tilde{\alpha}}|y'|^{\tilde{\alpha}}}dx dy,
\end{align}
for all $\phi \in \mcal{\dot{H}}^{s,\tilde{\alpha}}(\rN).$ Notice that as consequence of Theorem \ref{grndst_rep}, if $u$ satisfies \eqref{flap_cyl} then $U:= \mcal{P}_{\tilde{\alpha}}[u] := |x'|^{\tilde{\alpha}}u \in \mcal{\dot{H}}^{s,\tilde{\alpha}}(\rN)$ satisfies  
\begin{align}\label{sflap_cyl}
L^s_{\tilde{\alpha}} U  = \frac{U^{2^*_t-1}}{\md{x'}^{t+2^*_t\tilde{\alpha}}}
\end{align}
weakly, where $0<\tilde{\alpha}\leq \alpha.$ First, we recall the following lemma, from \cite{Kmann}, for convex functions, which will be used to derive a Kato type inequality for the newly defined operator
 $L^s_{\tilde{\alpha}}$, in the proof of Proposition \ref{linfty_est}.
\begin{lem}\label{kato_ineq}
Let $I\subset \mathbb{R}$ be an interval, $a,b \in I$, $\theta_1,\theta_2 \geq 0.$ If $f\in C^1(I)$ is convex, then 
\begin{align*}
(b-a)\bct{\theta_1f'(b)-\theta_2f'(a)} \geq \bct{f(b)-f(a)}\bct{\theta_1-\theta_1}.
\end{align*}
In particular, the following inequality holds:
\begin{align}\label{kato_ineq1}
(b-a)\bct{f(b)f'(b) -f(a)f'(a)} \geq \bct{f(b)-f(a)}^2.
\end{align}

\end{lem}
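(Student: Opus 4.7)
The plan is to prove the general inequality by a single algebraic rearrangement that splits the difference into two non-negative pieces weighted by $\theta_1$ and $\theta_2$, after which the two-sided tangent-line bounds for convex $C^1$ functions finish the argument; the particular inequality \eqref{kato_ineq1} then drops out as a special case.

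The basic tool is the standard characterization of convexity in $C^1$: for $a,b\in I$,
\begin{align*}
(b-a)f'(a) \leq f(b)-f(a) \leq (b-a)f'(b).
\end{align*}
These hold regardless of the sign of $b-a$: indeed, if $b<a$ both $b-a$ and $f'(a)-f'(b)$ are negative, and the inequalities preserve their direction. Equivalently, both
\begin{align*}
(b-a)f'(b) - \bct{f(b)-f(a)} \geq 0 \qquad\text{and}\qquad \bct{f(b)-f(a)} - (b-a)f'(a) \geq 0.
\end{align*}

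Next, I would compute directly
\begin{align*}
& (b-a)\bct{\theta_1 f'(b) - \theta_2 f'(a)} - \bct{f(b)-f(a)}\bct{\theta_1 - \theta_2} \\
& \qquad = \theta_1\Bigl((b-a)f'(b) - \bct{f(b)-f(a)}\Bigr) + \theta_2\Bigl(\bct{f(b)-f(a)} - (b-a)f'(a)\Bigr).
\end{align*}
Both bracketed factors are non-negative by the tangent-line bounds, and since $\theta_1,\theta_2\geq 0$, the right-hand side is non-negative. This yields the first (corrected) inequality. For \eqref{kato_ineq1}, specialize to $\theta_1 = f(b)$, $\theta_2 = f(a)$, which is a legitimate choice in the intended application where $f$ is applied to a non-negative function and hence takes non-negative values; the right-hand side of the first inequality then becomes exactly $\bct{f(b)-f(a)}^2$, as required.

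No serious obstacle is anticipated: the only subtlety is to verify that the tangent bounds go in the correct direction for both orderings of $a$ and $b$, which is handled uniformly by the observation above. In the subsequent application to the Kato-type inequality for $L^s_{\tilde{\alpha}}$, one applies \eqref{kato_ineq1} pointwise to the differences appearing in the double integral \eqref{sing_flap} with $a = u(y)$, $b = u(x)$ (so that $f(u)f'(u)$ produces the required nonlinear test function); the non-negativity $f\geq 0$ is then precisely the standing assumption on $u$.
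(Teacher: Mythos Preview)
Your proof is correct: the algebraic identity
\[
(b-a)\bct{\theta_1 f'(b)-\theta_2 f'(a)} - \bct{f(b)-f(a)}\bct{\theta_1-\theta_2}
= \theta_1\!\left[(b-a)f'(b)-\bct{f(b)-f(a)}\right] + \theta_2\!\left[\bct{f(b)-f(a)}-(b-a)f'(a)\right]
\]
together with the two tangent-line inequalities for $C^1$ convex functions gives the general statement, and specializing $\theta_1=f(b)$, $\theta_2=f(a)$ (with $f\ge 0$, as in the application) yields \eqref{kato_ineq1}. There is nothing to compare against here: the paper does not supply its own proof of this lemma but simply recalls it from \cite{Kmann}.
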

 
\begin{prop}\label{linfty_est}
Let $V\in \mcal{\dot{H}}^{s,\tilde{\alpha}}(\rN)$ be any non negative solution of \eqref{sflap_cyl}, where \\ $2\leq k\leq N-2 $, $0<s<1$, $0\leq t<2s$, $0<\tilde{\alpha}\leq \alpha$ and $2^*_t = \frac{2(N-t)}{N-2s}$. Then $V\in L^\infty(\rN).$  
\end{prop}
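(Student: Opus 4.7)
The plan is to run a nonlocal Moser-type iteration driven by the algebraic inequality \eqref{kato_ineq1} (which plays the role of a nonlocal chain rule) and the weighted Hardy--Sobolev--Maz'ya inequality \eqref{EFHSM_ineq}. The starting point is automatic: since $V \in \mcal{\dot{H}}^{s,\ta}(\rN)$, \eqref{EFHSM_ineq} gives that $V$ lies in the weighted space $L^{2^*_t}\bigl(\rN;\,|x'|^{-(t + 2^*_t \ta)}\,dx\bigr)$. The goal is to bootstrap from this base integrability to every finite exponent, and then pass to $L^\infty$.

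For $\beta \geq 2^*_t/2$ and $L > 1$, set $G_L(t) := \min\{t, L\}$ and test the weak formulation of \eqref{sflap_cyl} with $\phi := V\, G_L(V)^{2(\beta - 1)}$. The first task is admissibility of $\phi$ in $\mcal{\dot{H}}^{s,\ta}(\rN)$; this uses that $G_L$ is bounded and $1$-Lipschitz together with $V \in \mcal{\dot{H}}^{s,\ta}(\rN)$. Applying \eqref{kato_ineq1} to $f(t) := t\, G_L(t)^{\beta - 1}$ yields the pointwise inequality
\begin{equation*}
(a - b)\bigl(a\, G_L(a)^{2(\beta - 1)} - b\, G_L(b)^{2(\beta - 1)}\bigr) \geq c_\beta\bigl(a\, G_L(a)^{\beta - 1} - b\, G_L(b)^{\beta - 1}\bigr)^{2}
\end{equation*}
for $a, b \geq 0$ and some $c_\beta > 0$. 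Integrating this against the kernel $|x - y|^{-(N + 2s)}|x'|^{-\ta}|y'|^{-\ta}$, substituting into the test identity, and applying \eqref{EFHSM_ineq} on the left produces a bound of the form
\begin{equation*}
c_\beta\, C \left(\int_{\rN} \frac{(V\, G_L(V)^{\beta - 1})^{2^*_t}}{|x'|^{t + 2^*_t \ta}}\,dx\right)^{2/2^*_t} \leq \int_{\rN} \frac{V^{2^*_t - 2}\,(V\, G_L(V)^{\beta - 1})^{2}}{|x'|^{t + 2^*_t \ta}}\,dx.
\end{equation*}

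The right-hand side is then split over $\{V \leq M\}$ and $\{V > M\}$, and on the second piece H\"older with exponents $\tfrac{2^*_t}{2^*_t - 2}$ and $\tfrac{2^*_t}{2}$ extracts the factor $\bigl(\int_{\{V > M\}} V^{2^*_t}\,|x'|^{-(t + 2^*_t \ta)}\,dx\bigr)^{(2^*_t - 2)/2^*_t}$, which is arbitrarily small for $M$ large by the base integrability of $V$; this piece is absorbed into the left-hand side. On $\{V \leq M\}$ the integrand is dominated by $M^{2^*_t - 2}(V\, G_L(V)^{\beta - 1})^{2}$, a lower-order quantity. Passing $L \uparrow \infty$ by monotone convergence gives the quantitative implication: if $V$ lies in the weighted $L^{2\beta}$, then it lies in the weighted $L^{\beta\, 2^*_t}$, with constant depending on $\beta$ and $M$. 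Starting from $\beta = 2^*_t/2$ and iterating $\beta_{k+1} = \beta_k\, 2^*_t/2$ upgrades weighted integrability to $L^{q_k}$ with $q_k \to \infty$. A standard Moser-type bootstrap, tracking the $\beta$-dependence of $c_\beta$ and $C$ to close the resulting geometric series, then yields a uniform $L^\infty$ bound on $V$.

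The main obstacle is two-fold: first, verifying admissibility of the truncated test function $\phi$ in the weighted nonlocal space $\mcal{\dot{H}}^{s,\ta}(\rN)$ at every iteration stage, since naive product estimates may blow up near the singular set $\{x' = 0\}$; second, making the $L \uparrow \infty$ passage rigorous with constants uniform in $L$, so that the iteration actually closes. The nonlocal weighted framework forces the classical chain-rule identities to be replaced by the single algebraic inequality \eqref{kato_ineq1}, and the singular weight $|x'|^{-\ta}$ must be tracked carefully throughout.
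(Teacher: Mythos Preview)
Your approach is correct and essentially identical to the paper's: both run a Moser iteration using the Kato-type pointwise inequality together with the weighted Sobolev inequality \eqref{EFHSM_ineq}, truncate the power test function to gain admissibility in $\mcal{\dot{H}}^{s,\ta}(\rN)$, perform an absorption at a large level for the base step, and then iterate geometrically to $L^\infty$.

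Two small differences are worth noting. First, the paper's truncation $\phi_{q,R}(r)=r^{q}$ on $[0,R]$ with the tangent line continuation beyond $R$ is globally convex and $C^{1}$, so Lemma~\ref{kato_ineq} (inequality \eqref{kato_ineq1}) applies verbatim with constant $1$; your $f(t)=t\,G_L(t)^{\beta-1}$ is not convex across $t=L$ (the derivative drops there), so \eqref{kato_ineq1} does not apply to this $f$ as stated. The inequality you write down is nonetheless true and standard, but strictly speaking it is a variant of Kato's, not an instance of \eqref{kato_ineq1}. Second, the paper performs the absorption only once (to initialize $q_{1}=2^*_t/2$) and then iterates the bare inequality \eqref{mi3} with explicit constant $2q\,C$; this makes the bookkeeping of constants in the final product transparent, whereas re-absorbing at every step forces $M$ to depend on $\beta$ and requires an extra word on why the product still closes.
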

\begin{proof}
For $q\geq 1$ and $R>0$ we define
\begin{align*}
\phi(r):= \phi_{q,R}(r) :=
\begin{cases}
r^q , \ \text{if } 0\leq r \leq R \\ qR^{q-1}(r-R)+R^q, \ \text{if }r>R.
\end{cases}
\end{align*}
Clearly, $\phi_{q,R}$ is Lipschitz and $\phi_{q,R}(0) =0$ so, by Lemma \ref{rep_sob_sp}, $\phi_{q,R}(V)\in \mcal{\dot{H}}^{s,\tilde{\alpha}}(\rN)$. So, using \eqref{EFHSM_ineq} and Theorem \ref{grndst_rep} we have 
\begin{align}\label{mi1}
\int_{\rN}\int_{\rN} \frac{\md{\phi\bct{V(x)}-\phi \bct{V(y)}}^2}{\md{x-y}^{N+2s}} \frac{dxdy}{|x'|^{\tilde{\alpha}}|y'|^{\tilde{\alpha}}}\geq C_{N,s,k} \left[\int_{\rN}\md{\phi(V)}^{2_t^*} \frac{dx}{|x'|^{t+\tilde{\alpha}2_t^*}}\right]^{2/2_t^*}.
\end{align}
Notice that, $\phi\in C^1\bct{[0,\infty)}$ so,  using inequality \ref{kato_ineq1} we have 

\begin{align}\label{mi1.1}
&\int_{\rN}\int_{\rN} \frac{\md{\phi\bct{V(x)}-\phi \bct{V(y)}}^2}{\md{x-y}^{N+2s}} \frac{dxdy}{|x'|^{\tilde{\alpha}}|y'|^{\tilde{\alpha}}} \notag \\&\leq \int_{\rN}\int_{\rN} \frac{\bct{\phi\bct{V(x)}\phi'\bct{V(x)}-\phi \bct{V(y)}\phi'\bct{V(y)}}\bct{V(x)-V(y)}}{\md{x-y}^{N+2s}} \frac{dxdy}{|x'|^{\tilde{\alpha}}|y'|^{\tilde{\alpha}}}.
\end{align}
Now, observe that the function $g$ defined by $g := \phi\phi'$ is Lipschitz with $g(0) = 0.$ Hence, $g(V) = \phi(V)\phi'(V) \in \mcal{\dot{H}}^{s,\tilde{\alpha}}(\rN)$. So, using $g(V)$ as test function in \eqref{sflap_cyl} and then employing \eqref{mi1.1} we have

\begin{align}\label{mi2}
\int_{\rN}\int_{\rN} \frac{\md{\phi\bct{V(x)}-\phi \bct{V(y)}}^2}{\md{x-y}^{N+2s}} \frac{dxdy}{|x'|^{\tilde{\alpha}}|y'|^{\tilde{\alpha}}} &\leq \int_{\rN} \phi(V)\phi'(V)\frac{V^{2_t^*-1}}{|x'|^{t+\tilde{\alpha}2_t^*}}dx \notag \\ &\leq 2q\int_{\rN}\phi^2(V)V^{2_t^*-2}\frac{dx}{|x'|^{t+\tilde{\alpha}2_t^*}},
\end{align}
 where in the last inequality we have used $r\phi'(r)\leq 2q\phi(r)$. Combining \eqref{mi1} and \eqref{mi2} we have 
 \begin{align}\label{mi3}
  \left[\int_{\rN}\md{\phi(V)}^{2_t^*} \frac{dx}{|x'|^{t+\tilde{\alpha}2^*}}\right]^{2/2_t^*} \leq 2qC_{N,s,k} \int_{\rN}\phi^2(V)V^{2_t^*-2}\frac{dx}{|x'|^{t+\tilde{\alpha}2_t^*}}
 \end{align}
We estimate the R.H.S. of \eqref{mi3} in the following manner. Consider $q= 2_t^*/2$ and $a>0$ (to be chosen later). Then we have
\begin{align}\label{mi4}
2qC_{N,s,k} \int_{\rN}\phi^2(V)V^{2_t^*-2}\frac{dx}{|x'|^{t+\tilde{\alpha}2_t^*}}&= 2qC_{N,s,k}\int_{\{V\leq a\}} \phi^2(V)V^{2_t^*-2}\frac{dx}{|x'|^{t+\tilde{\alpha}2_t^*}} \notag \\ &+ 2qC_{N,s,k} \int_{\{V>a\}} \phi^2(V)V^{2_t^*-2}\frac{dx}{|x'|^{t+\tilde{\alpha}2_t^*}}.
\end{align}
Now, first using H{\"o}lder inequality and then choosing $a>0$ large enough we have 
\begin{align*}
2qC_{N,s,k} \int_{\{V>a\}} \phi^2(V)V^{2_t^*-2}\frac{dx}{|x'|^{t+\tilde{\alpha}2_t^*}} \leq \frac{1}{2} \left[\int_{\rN}\md{\phi(V)}^{2_t^*}\frac{dx}{|x'|^{t+\tilde{\alpha}2_t^*}}\right]^{\frac{2}{2_t^*}}
\end{align*}
Hence incorporating this estimate in \eqref{mi4} and then using \eqref{mi3} we have 
\begin{align*}
\left[\int_{\rN}\md{\phi(V)}^{2_t^*}\frac{dx}{|x'|^{t+\tilde{\alpha}2_t^*}}\right]^{\frac{2}{2_t^*}} &\leq qa^{2_t^*-2}C_{N,s,k} \int_{\rN} \frac{\phi^2(V)dx}{|x'|^{t+\tilde{\alpha}2_t^*}} \\ &\leq qa^{2_t^*-2}C_{N,s,k} \int_{\rN} \frac{|V|^{2_t^*}}{|x'|^{t+\tilde{\alpha}2_t^*}},
\end{align*}
where in the last inequality we have used $\phi(r)\leq r^q$ and $q=2^*/2.$ Now letting $R\rightarrow \infty$ and using Fatou's lemma we conclude 
\begin{align}\label{mi5}
\int_{\rN} \md{V}^{2_t^*\frac{2_t^*}{2}} \frac{dx}{|x'|^{t+\tilde{\alpha}2_t^*}} <\infty.
\end{align}
For $m\geq 1$ we define $\{q_m\}$ by
\begin{align}\label{mi6}
2q_{m+1} + 2_t^*-2 = 2_t^*q_m, \ q_1 = \frac{2_t^*}{2}.
\end{align}
Then using \eqref{mi3} and \eqref{mi6} we arrive at
\begin{align*}
\left[\int_{\rN}\md{\phi_{q_{m+1},R}(V)}^{2_t^*}\frac{dx}{|x'|^{t+\tilde{\alpha}2_t^*}}\right]^{\frac{2_t^*}{2}} \leq q_{m+1}C_{N,s,k} \int_{\rN} \frac{V^{q
_{m}2_t^*}}{|x'|^{t+\tilde{\alpha}2_t^*}} dx
\end{align*}
Again letting $R\rightarrow 0$ and using Fatou's lemma we conclude
\begin{align}\label{mi7}
\left[\int_{\rN} \frac{V^{q
_{m+1}2_t^*}}{|x'|^{t+\tilde{\alpha}2_t^*}} dx\right]^{\frac{1}{2_t^*(q_{m+1}-1)}} \leq \bct{q_{m+1}C_{N,s,k}}^{\frac{1}{2(q_{m+1}-1)}}\left[\int_{\rN} \frac{V^{q
_{m}2_t^*}}{|x'|^{t+\tilde{\alpha}2_t^*}} dx\right]^{\frac{1}{2_t^*(q_m-1)}}.
\end{align}
For $m\geq 1$, set 
\begin{align*}
I_m : = \left[\int_{\rN} \frac{V^{q
_{m}2_t^*}}{|x'|^{t+\tilde{\alpha}2_t^*}} dx\right]^{\frac{1}{2_t^*(q_m-1)}}, \ D_m = \bct{q_{m+1}C_{N,s,k}}^{\frac{1}{2(q_{m+1}-1)}}.
\end{align*}
 Then \eqref{mi7} gives $I_{m+1}\leq D_mI_m, \forall m \geq 1$. Taking $\log$ in both side and then iterating we get
 \begin{align}\label{mi8}
 \log I_{m+1} \leq \sum_{j=1}^m \log D_j + \log I_1.
 \end{align}
Since $q_1 >1$, it is easy to see $\sum_{j=1}^\infty \log D_j < C_{N,s,k}$. Hence using \eqref{mi5}, from \eqref{mi8} we get 
\begin{align*}
I_{m+1} \leq C_{N,s,k}, \ \forall m\geq1.
\end{align*}
 So, for any $R>0$ we have 
 \begin{align*}
 \left[\int_{|x|\leq R}V^{q_m2^*_t}dx\right]^\frac{1}{2_t^*(q_m-1)}\leq R^{\frac{t+\tilde{\alpha}2^*_t}{2^*_t(q_m-1)}}\left[\int_{|x|\leq R} \frac{V^{q
_{m}2_t^*}}{|x'|^{t+\tilde{\alpha}2_t^*}} dx\right]^{\frac{1}{2_t^*(q_m-1)}} \leq C_{N,s,k} R^{\frac{t+\tilde{\alpha}2^*_t}{2^*_t(q_m-1)}}.
 \end{align*}
Since $q_m \rightarrow \infty$ as $m \rightarrow \infty$, so $V \in L^\infty(B^N_R(0))$ and 
\begin{align*}
\nrm{V}_{L^\infty(B^N_R(0))} \leq C_{N,s,k}, \ \forall R>0.
\end{align*}
This proves the proposition.
\end{proof}
\begin{cor}\label{linfty_flap}
As a consequence of the preceding Proposition and Theorem \ref{grndst_rep}, we observe that if $u$ solves \eqref{flap_cyl} then $\mcal{P}_{\tilde{\alpha}}[u] \in L^\infty(\rN)$. Moreover, by Lemma 2.2 of \cite{FallWeth},  $\eqref{flap_cyl}$ is invariant under the Kelvin transformation i.e. $Ku(x):= \frac{1}{|x|^{N-2s}}u(x/|x|^2)$. Hence we have the following asymptotic estimate for any $u$ solving \eqref{flap_cyl}
\begin{align*}
u(x) \leq \frac{C}{|x'|^{\tilde{\alpha}}\bct{1+\md{x}^{N-2s-2\tilde{\alpha}}}} ,  \ \forall x\in \rN_k,
\end{align*}
where $C>0$ is constant, depends on $u$ but independent of $x\in \rN_k.$
\end{cor}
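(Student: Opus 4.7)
The plan is to combine the interior $L^{\infty}$-bound for the transformed function already proved in Proposition \ref{linfty_est} with the Kelvin invariance of \eqref{flap_cyl} supplied by Lemma 2.2 of \cite{FallWeth}.

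First, given a weak solution $u$ of \eqref{flap_cyl}, I pick the unique $\tilde{\alpha}\in(0,(k-2s)/2]$ associated to $\beta$ via \eqref{alphatilde} and set $V:=\mcal{P}_{\tilde{\alpha}}[u]=|x'|^{\tilde{\alpha}}u$. The polarized version of the ground state representation (Theorem \ref{FHM_grndst}) shows that $V\in\mcal{\dot{H}}^{s,\tilde{\alpha}}(\rN)$ and that for every $\Phi\in\mcal{\dot{H}}^{s,\tilde{\alpha}}(\rN)$ the test $\psi=|x'|^{-\tilde{\alpha}}\Phi\in\mcal{\dot{H}}^{s}_{\beta}(\rN)$ turns Definition \ref{wk_sol} into the weak formulation of \eqref{sflap_cyl} for $V$ with operator $L^{s}_{\tilde{\alpha}}$. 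Applying Proposition \ref{linfty_est} yields $\|V\|_{L^{\infty}(\rN)}\le C$, i.e.
\[
u(x)\le \frac{C}{|x'|^{\tilde{\alpha}}},\qquad x\in\rN_{k}.
\]

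Next, by Lemma 2.2 of \cite{FallWeth} the Kelvin transform $Ku(x):=|x|^{-(N-2s)}u(x/|x|^{2})$ is again a weak solution of \eqref{flap_cyl} lying in $\mcal{\dot{H}}^{s}_{\beta}(\rN)$. Repeating the previous step with $Ku$ in place of $u$ gives $|x'|^{\tilde{\alpha}}Ku(x)\le C$. Substituting $y=x/|x|^{2}$, which satisfies $|x|=|y|^{-1}$ and $|x'|=|y'|/|y|^{2}$, this inequality rearranges to
\[
u(y)\le \frac{C}{|y'|^{\tilde{\alpha}}\,|y|^{N-2s-2\tilde{\alpha}}},\qquad y\in\rN_{k}.
\]
The exponent is positive since $N-2s-2\tilde{\alpha}\ge N-k\ge 2$, so this is a genuine decay estimate at infinity.

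Finally, I combine the two inequalities: on the set $\{|x|\le 1\}$ the first bound already dominates, while on $\{|x|\ge 1\}$ the second does, and in each case the right hand side is bounded by a constant multiple of $|x'|^{-\tilde{\alpha}}(1+|x|^{N-2s-2\tilde{\alpha}})^{-1}$; adding the two yields \eqref{asy_est}. The only nontrivial point is the first step, where one must justify that the bilinear ground state identity behind \eqref{grndst_rep} really converts the weak formulation in $\mcal{\dot{H}}^{s}_{\beta}(\rN)$ into a weak formulation against $\mcal{\dot{H}}^{s,\tilde{\alpha}}(\rN)$ (needing a density argument that the map $\Phi\mapsto|x'|^{-\tilde{\alpha}}\Phi$ is a bijective isometry between these two completions); given Theorem \ref{FHM_grndst} this is routine, and the remaining Kelvin step is a direct change of variables using the quoted invariance lemma.
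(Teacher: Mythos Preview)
Your proposal is correct and follows exactly the route sketched in the paper: the corollary itself only records that Proposition~\ref{linfty_est} plus the ground state representation give $|x'|^{\tilde\alpha}u\in L^\infty$, and that the Kelvin invariance from \cite{FallWeth} then yields the decay at infinity, which combined give \eqref{asy_est}. You have simply written out the change of variables and the splitting into $\{|x|\le 1\}$ and $\{|x|\ge 1\}$ explicitly, which the paper leaves implicit.
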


\subsection{Extension and Regularity}
Throughout this section, we will consider, $\beta < \mcal{C}_{k,s}.$ Let $u \in \dot{H}^s(\rN)$ be a positive solution of 
\begin{align*}
\bct{-\Delta}^s u - \beta \frac{u}{\md{x'}^{2s}} = \frac{u^{2_t^*-1}}{|x'|^t} \ \text{in } \rN.
\end{align*}
We consider, the $s$-Harmonic extension $U$ of $u$ defined by 
\begin{align}\label{s-har_ext}
U(x,r) : = d_{N,s}\int_{\rN}\frac{r^{2s}}{\bct{|x-\xi|^2+r^2}^{\frac{N+2s}{2}}}u(\xi) d\xi, \ \text{for } x\in \rN, \ r\in (0,\infty).
\end{align}
where the constant $d_{N,s}$ is chosen so that $\int_{\rN}\frac{dx}{\bct{1+|x|^2}^\frac{N+2s}{2}} = \frac{2^{1-2s}\Gamma(1-s)}{d_{N,s}\Gamma(s)}$. Then, (See \cite{CS}) for any bounded domain $\Om \subset \ro^{N+1}_+$, $U \in H^1(\Om,1-2s)$ and satisfies (weakly) 
\begin{align}\label{ext_flap_cyl}
\begin{cases}
\operatorname{div}\bct{r^{1-2s}\nabla U}= 0\ &\text{in } \Om,\\
-\lim_{r\rightarrow 0+}r^{1-2s}\partial_rU(x,r) = a(x)U(x,0) +b(x)\ &\text{on }\partial'\Om,
\end{cases}
\end{align}
where $\partial \Om'$ is the interior of $\partial \Om \cap \rN$, $b(x)=0$ and $a(x):= \bct{\frac{\beta}{|x'|^{2s}} + \frac{U^\frac{4s-2t}{N-2s}(x,0)}{|x'|^t}}.$ For $R>0$, we denote $Q_R := B_R^N(0)\times (0,R).$ We will use the following results from \cite{JLX} to prove the smoothness of solution away from the set $\{x'=0\}$.  

\begin{prop}\label{imp_int}
Let $a\in L^{N/2s}(B_1^N)$, $b\in L^p(B_1^N)$ with $p>2s$. Also, assume that  $0\leq \overline{U}\in H^1(Q_1,1-2s)$ is weak solution of \eqref{ext_flap_cyl} in $Q_1$. Then there exists $\delta >0$ which depends only on $n$ and $s$ such that if $\nrm{a^+}_{L^{n/2s}(B_1^N)}< \delta,$ then 
\begin{align*}
\nrm{\overline{U}(.,0)}_{L^q(\partial'Q_{1/2})} \leq C \bct{\nrm{\overline{U}}_{L^2(Q_1,1-2s)}+ \nrm{\nabla \overline{U}}_{L^2(Q_1,1-2s)}+\nrm{b^+}_{L^p(B_1^N)}},
\end{align*}
where $C>0$ depends only on $N,p,s,\delta$ and $q := \min\{\frac{2(N+1)}{N-2s}, \frac{N(p-1)}{(N-2s)p}, \frac{2N}{N-2s}\}.$
\end{prop}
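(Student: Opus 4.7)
The approach is a weighted Moser iteration scheme tailored to the degenerate elliptic operator $\operatorname{div}(r^{1-2s}\nabla\,\cdot\,)$, whose weight $r^{1-2s}$ lies in the Muckenhoupt $A_2$ class. The central analytic tool is the weighted Sobolev trace inequality
\begin{equation*}
\bct{\int_{B^N_\rho(0)} |V(x,0)|^{\frac{2(N+1)}{N-2s}} dx}^{\frac{N-2s}{N+1}} \leq C \int_{Q_\rho} r^{1-2s}\bct{\md{\nabla V}^2 + V^2} \, dx\, dr,
\end{equation*}
which follows from the Caffarelli--Silvestre identification of the trace of $H^1(Q_\rho, 1-2s)$ with $H^s(B^N_\rho(0))$ and the embedding $H^s \hookrightarrow L^{2N/(N-2s)}$.

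Step 1 (Caccioppoli-type estimate): Fix $\gamma \geq 0$ and $k>0$, set $\overline{U}_k := \min(\overline{U},k)$ (to justify computations rigorously), and test the weak formulation of \eqref{ext_flap_cyl} against $\varphi = \eta^2 \overline{U}_k^{2\gamma}\overline{U}$, where $\eta$ is a smooth cutoff with $\eta\equiv 1$ on $Q_{1/2}$ and $\operatorname{supp}\eta \Subset Q_1$. Expanding the gradient term and applying Cauchy--Schwarz with absorption produces
\begin{equation*}
\int_{Q_1} r^{1-2s} \eta^2 \overline{U}_k^{2\gamma} \md{\nabla \overline{U}}^2 \leq C\bct{\int_{Q_1} r^{1-2s} \md{\nabla \eta}^2 \overline{U}_k^{2\gamma}\overline{U}^2 + \int_{B_1^N} \eta^2 (a\overline{U}+b)\overline{U}_k^{2\gamma}\overline{U}\, dx}.
\end{equation*}

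Step 2 (Critical absorption via smallness of $a^+$): Setting $w := \eta \overline{U}_k^\gamma \overline{U}$ and using H\"older with exponents $N/(2s)$ and $N/(N-2s)$ on the boundary integral,
\begin{equation*}
\int_{B_1^N} a^+ w^2 \, dx \leq \nrm{a^+}_{L^{N/2s}(B_1^N)} \nrm{w(\cdot,0)}_{L^{2N/(N-2s)}(B_1^N)}^2 \leq C \nrm{a^+}_{L^{N/2s}} \int_{Q_1} r^{1-2s}\bct{\md{\nabla w}^2 + w^2}.
\end{equation*}
Choosing $\delta$ so that $C\delta < 1/2$ lets this term be absorbed into the left-hand side after expanding $\md{\nabla w}^2$. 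The $b$-term is handled by H\"older and Young: the hypothesis $p>2s$ guarantees its dual exponent pairs against a trace space controlled by the weighted $H^1$ norm.

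Step 3 (Iteration and conclusion): Starting from $\gamma = 0$, the estimates of Steps 1--2 combined with the trace Sobolev inequality applied to $\eta\overline{U}_k^{\gamma+1}$ give an initial $L^{2(N+1)/(N-2s)}$ trace bound. Iterating on a dyadic family of cylinders $Q_{r_j}$ with $r_j \downarrow 1/2$ and a geometric sequence of exponents $2(\gamma_j+1)\cdot\tfrac{2(N+1)}{N-2s}$ upgrades integrability. The limit exponent is the minimum of the three natural ceilings: $2(N+1)/(N-2s)$ from the base trace inequality, $2N/(N-2s)$ from the sharp trace embedding, and $N(p-1)/((N-2s)p)$ from the maximal integrability the $b$-term can propagate through the iteration; this is precisely the $q$ in the statement. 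Letting $k\to\infty$ by monotone convergence removes the truncation.

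The main obstacle is Step 2: the space $L^{N/2s}$ is scale-critical for the trace Sobolev inequality, so neither a perturbative argument nor a simple H\"older estimate on a smaller exponent will work. One genuinely needs the smallness hypothesis $\nrm{a^+}_{L^{N/2s}} < \delta$ together with an exact matching of exponents to close the absorption. Beyond that, the main technical care lies in keeping all boundary contributions at $\{r = 0\}$ controlled — one must verify that cutoff and truncation functions multiplied against $\overline{U}$ remain admissible test functions in $H^1(Q_1, 1-2s)$, which relies on weighted Hardy-type inequalities compatible with the $A_2$ weight.
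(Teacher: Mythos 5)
The paper does not prove this proposition at all: it is imported verbatim from Jin--Li--Xiong \cite{JLX}, so there is no in-paper argument to compare against. Your sketch is essentially the standard proof given in that reference — a Caccioppoli estimate for the degenerate operator $\operatorname{div}(r^{1-2s}\nabla\cdot)$ with test functions $\eta^2\overline{U}_k^{2\gamma}\overline{U}$, the weighted trace Sobolev inequality, and absorption of the scale-critical potential term using the smallness of $\nrm{a^+}_{L^{N/2s}}$ — and the logic is sound. Two small points worth tightening if you write it out in full: the Hölder/Young treatment of the $b$-term really needs $p$ to exceed $N/(2s)$ (the stated hypothesis $p>2s$ appears to be a typo carried over from the source), and in Step 3 you should make explicit why the iteration terminates at a finite exponent — the constant in the Caccioppoli inequality grows with $\gamma$, so the absorption $C(\gamma)\delta<1/2$ fails beyond finitely many steps, which is precisely why the conclusion is an $L^q$ bound for the stated finite $q$ rather than an $L^\infty$ bound.
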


\begin{prop}\label{har_inq}
Let $\overline{U} \in H^1(Q_1,1-2s)$ be a nonnegative weak solution of \eqref{ext_flap_cyl} and $a,b \in L^p(B_1)$ for some $p>N/{2s}.$ Then we have the following Harnack inequality:
\begin{align*}
\sup_{Q_{1/2}}{\overline{U}} \leq C \bct{\inf_{Q_{1/2}}\overline{U}+ \nrm{b}_{L^p(B_1)}},
\end{align*}
where $C>0$ depends only on $N,p,s$ and $\nrm{a}_{L^p(B_1)}.$ Consequently, there exists $\alpha \in (0,1)$ depending only on $N,p,s$ and $\nrm{a}_{L^p(B_1)}$ such that any weak solution $\overline{U}$ of \eqref{ext_flap_cyl} is in $C^{\alpha}\bct{\overline{{Q}_{1/2}}}.$
\end{prop}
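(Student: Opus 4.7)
The plan is to approach Proposition \ref{har_inq} as a Moser-type Harnack inequality for a degenerate elliptic equation with an $A_2$ Muckenhoupt weight, following the Fabes--Kenig--Serapioni framework, augmented by a careful treatment of the non-local Robin-type boundary condition at $\{r=0\}$. The weight $w(x,r)=r^{1-2s}$ satisfies the $A_2$ condition uniformly in $Q_1$, so the weighted spaces $H^1(Q_R,1-2s)$ enjoy weighted Poincaré and Sobolev inequalities as well as a trace embedding of the form $\nrm{V(\cdot,0)}_{L^{\frac{2N}{N-2s}}(B_R^N)}\leq C\nrm{V}_{H^1(Q_R,1-2s)}$. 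These are the tools that will replace the unweighted classical inequalities throughout the argument.

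First I would derive a family of Caccioppoli estimates: for $\eta\in C_c^\infty(Q_1)$ and $\gamma\neq 0$, plug $\phi=\eta^2 \overline{U}^{\gamma}$ (suitably truncated near $\overline{U}=0$ when $\gamma<0$) into the weak form. Integration by parts yields
\begin{equation*}
\int_{Q_1} r^{1-2s}|\nabla(\eta \overline{U}^{(\gamma+1)/2})|^2 \leq C\int_{Q_1} r^{1-2s}|\nabla\eta|^2 \overline{U}^{\gamma+1} + \int_{\partial'Q_1}\eta^2\bigl(a\overline{U}^{\gamma+1}+b\,\overline{U}^\gamma\bigr).
\end{equation*}
The boundary terms are controlled by H\"older's inequality using $p>N/(2s)$: the exponent $p$ is subcritical with respect to the weighted trace embedding, so $\int a\,\eta^2 \overline{U}^{\gamma+1}$ absorbs into the left-hand side with a constant depending only on $\nrm{a}_{L^p(B_1)}$. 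Combining the Caccioppoli estimate with the weighted Sobolev--trace inequality and iterating over shrinking cubes $Q_{r_j}$ with $r_j\downarrow 1/2$ yields, for positive $\gamma$, a local $L^\infty$ bound
\begin{equation*}
\sup_{Q_{1/2}} \overline{U} \leq C\bigl(\nrm{\overline{U}}_{L^{q_0}(Q_{3/4},1-2s)} + \nrm{b}_{L^p(B_1)}\bigr)
\end{equation*}
for some $q_0>0$, and for negative $\gamma$ the analogous estimate controlling $\inf \overline{U}$ from below by a negative moment norm.

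The bridge between negative and positive moments is the John--Nirenberg lemma applied to $\log \overline{U}$. For this I would test with $\phi=\eta^2/\overline{U}$ to get
\begin{equation*}
\int_{Q_1} r^{1-2s}|\nabla \log \overline{U}|^2 \eta^2 \leq C\int_{Q_1} r^{1-2s}|\nabla\eta|^2 + (\text{boundary terms involving } a,b),
\end{equation*}
which, via a weighted Poincar\'e inequality, shows that $\log \overline{U}$ has bounded mean oscillation, and hence its positive and negative moments of some order are comparable. Combining this with the two moment estimates gives the Harnack inequality. The H\"older statement then follows by the standard Krylov--Safonov/Moser oscillation-decay argument: Harnack applied to $M_R-\overline{U}$ and $\overline{U}-m_R$ on nested cylinders produces a geometric decay $\mathrm{osc}_{Q_{r/2}}\overline{U}\leq \theta\,\mathrm{osc}_{Q_r}\overline{U}+C r^\sigma\nrm{b}_{L^p}$ for some $\theta\in(0,1)$, which iterates to $C^\alpha$ regularity up to the flat boundary $\{r=0\}$.

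I expect the main obstacle to be a clean handling of the boundary integrals. The degeneracy of the weight at $r=0$ makes the trace theory delicate, and the non-symmetric Robin boundary condition with a potentially large (but subcritical) coefficient $a$ requires that the absorption into the energy be quantitative and independent of truncation levels. This is precisely where the hypothesis $p>N/(2s)$ is used: it guarantees that $\nrm{a}_{L^p(B_1^N)}$ controls the relevant trace norms with a compact-perturbation constant, enabling the iteration to close. Everything else is a standard, if technically intricate, adaptation of Moser's scheme to the weighted extension setting.
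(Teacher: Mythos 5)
The paper does not prove this proposition at all: it is imported verbatim from \cite{JLX} (see also the Fabes--Kenig--Serapioni theory on which that paper relies), so there is no in-text argument to compare against. Your outline is essentially the proof given in that reference --- Moser iteration in the $A_2$-weighted framework with the Robin boundary term absorbed via the weighted trace embedding and the subcriticality $p>N/(2s)$, plus the John--Nirenberg crossover --- and I see no step in it that would fail.
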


\subsection{Proof of Theorem \ref{reg_flap_cyl}} 
\begin{proof}
Clearly, the first part of the theorem follows form Corollary \ref{linfty_flap}. To prove the second part, we take $x_0 \in \rN_k$ and $R_0>0$ be such that $\overline{B^N_{R_0}(x_0)} \subset \rN_k.$ Consider the $s$-harmonic extension $U$ of $u$ defined in \eqref{s-har_ext}. Clearly, $U$ is nonnegative and satisfies \eqref{ext_flap_cyl} with $b=0$, weakly in $B^N_{R_0}(x_0)\times (0,R_0).$ We define $V(x,r):= U(x_0+R_0x,R_0r)$ for $x\in B^N_1(0)$ and $0<r<1.$ Then, $V$ satisfies \eqref{ext_flap_cyl} with $a(x): = R_0^{2s}\bct{\frac{\beta}{\md{x_0'+R_0x'}^{2s}} + \frac{u^{2_t^*-2}(x)}{\md{x_0'+R_0x'}^t}}$ and $b(x)= 0.$ Since $\overline{B^N_{R_0}(x_0)} \subset \rN_k$, $a\in L^{\frac{N}{2s-t}}(B_1^N)$. Using Proposition \ref{imp_int} whenever needed, we conclude $a\in L^p(B^N_{1/2})$ with $p>N/{2s}$, for any $0\leq t<2s$. Hence by Proposition \ref{har_inq}, $V\in C^\alpha(\overline{{Q}_{1/4}}).$ So, by local Schauder estimates (See Theorem 2.11 of \cite{JLX}) and bootstrapping argument we have $V \in C^\infty(\overline{Q_{1/4}})$ which in turn implies $U \in C^\infty \left(\overline{B^N_{R_0}(x_0)}\times [0,R] \right).$ This implies $u\in C^\infty(\rN_k).$ This proves the theorem.
\end{proof}

\section{Cylindrical Symmetry of Positive solution}
\label{symm}
Our main goal of this section is to prove Theorem \ref{cyl_sym}.

First, we will prove the following strong maximum principle for antisymmetric function. Notice that, here we are not assuming any lower semi continuity of $u$ upto the boundary. To compensate this, we are assuming a global non negativity of $u$ on the half plane. The proof is a suitable adaptation of the techniques introduced by Silvestre in \cite{S}. For reader's convenience, we will add the proof.

\begin{lem}\label{mp_as}
Let $u \in L_s(\rN)$ and $(-\Delta)^su\geq 0$ in $\Om$, in the distributional sense , where $\Om \subset \Om_\lambda := \{x\in\rN: x_1<\lambda\}$ is open and bounded. Also, assume that $u\geq 0$ a.e. in $\Om_\lambda$ and $u$ is antisymmetric i.e. $u(x_\lambda) = -u(x)$ for a.e. $x\in \Om_\lambda$, where $x_\lambda := (2\lambda-x_1,x_2,\dots, x_N).$ Then either $u>0$ in $\Om$ or $u\equiv 0$ a.e. in $\rN.$ Moreover, the above result still remains true if we replace $x_1$ by $x_i$ for any $i\in \{1,2,\dots,N\}$ in the definition  of $\Om_\lambda.$ 
\end{lem}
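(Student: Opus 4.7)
The plan is to adapt Silvestre's strong minimum principle to the antisymmetric setting by combining the Poisson representation of the $s$-harmonic replacement with the elementary kernel comparison $|x_0-y|^{-N-2s}>|x_0-y_\lambda|^{-N-2s}$ whenever $x_0,y\in\Om_\lambda$. First I pass to the canonical lower semicontinuous representative of $u$ on $\Om$, which exists because $u$ is distributionally $s$-superharmonic there; combining with the pointwise a.e.\ antisymmetry and the a.e.\ nonnegativity on $\Om_\lambda$, one may assume after modifying on a null set that $u(x)\geq 0$ pointwise on $\Om_\lambda$ and $u(x_\lambda)=-u(x)$ pointwise. Assume $u\not\equiv 0$ and suppose for contradiction that $u(x_0)\leq 0$ for some $x_0\in\Om$. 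Since $x_0\in\Om\subset\Om_\lambda$, this forces $u(x_0)=0$.

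Fix $r>0$ so small that $\overline{B^N_r(x_0)}\subset\Om$ and $r<\lambda-(x_0)_1$; then $B^N_r(x_0)\subset\Om_\lambda$ strictly, and its $\lambda$-reflection is disjoint from $B^N_r(x_0)$ and sits inside $\rN\setminus\Om_\lambda$. Let $w$ be the $s$-harmonic extension of $u|_{\rN\setminus B^N_r(x_0)}$ into $B^N_r(x_0)$; the classical balayage/comparison theory for distributionally $s$-superharmonic functions gives $u\geq w$ pointwise on $B^N_r(x_0)$, so in particular $0=u(x_0)\geq w(x_0)$. Write the Poisson formula
\begin{equation*}
w(x_0)=\int_{\rN\setminus B^N_r(x_0)} P_r(x_0,y)\,u(y)\,dy,
\end{equation*}
split the integration into $\Om_\lambda\setminus B^N_r(x_0)$ and $\rN\setminus\Om_\lambda$, substitute $z=y_\lambda$ on the latter piece, and apply antisymmetry. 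After collecting terms one obtains
\begin{equation*}
w(x_0)=\int_{\Om_\lambda\setminus B^N_r(x_0)} K_r(x_0,y)\,u(y)\,dy\;-\;\int_{B^N_r(x_0)} P_r(x_0,z_\lambda)\,u(z)\,dz,
\end{equation*}
with $K_r(x_0,y):=P_r(x_0,y)-P_r(x_0,y_\lambda)$. Because the Poisson kernel $P_r(x_0,\cdot)$ is a strictly decreasing function of the distance from $x_0$ and $|y-x_0|<|y_\lambda-x_0|$ for $y\in\Om_\lambda$, the kernel $K_r$ is strictly positive on $\Om_\lambda$, so $w(x_0)\leq 0$ rearranges to
\begin{equation*}
\int_{\Om_\lambda\setminus B^N_r(x_0)} K_r(x_0,y)\,u(y)\,dy\;\le\;\int_{B^N_r(x_0)} P_r(x_0,z_\lambda)\,u(z)\,dz.
\end{equation*}

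Since $u$ does not vanish on $\Om_\lambda$, I can choose a compact set $A\subset\Om_\lambda$ with $\int_A u>0$ and $A$ uniformly bounded away both from $x_0$ and from the hyperplane $\{x_1=\lambda\}$. For $r$ small the explicit formula for $P_r$ yields a uniform lower bound $K_r(x_0,y)\geq c_0 r^{2s}$ on $A$ and a uniform upper bound $P_r(x_0,z_\lambda)\leq c_1 r^{2s}$ for $z\in B^N_r(x_0)$, where $c_0,c_1>0$ depend on $x_0$, $\lambda$, and $A$ but not on $r$. Dividing by $r^{2s}$ the displayed inequality forces $c_0\int_A u\;\le\;c_1\int_{B^N_r(x_0)} u(z)\,dz$. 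Choosing $r=r_k\to 0$ along a sequence with $\fint_{B^N_{r_k}(x_0)} u\to u(x_0)=0$ (available via the $\liminf$ definition of the l.s.c.\ representative) sends the right-hand side to $0$ while the left-hand side is a fixed positive constant, a contradiction. Hence no such $x_0$ exists, proving the dichotomy. The whole argument uses only reflection across a single hyperplane, so it transfers verbatim when $x_1$ is replaced by any $x_i$. The most delicate technical point is the passage from the distributional $s$-superharmonicity of $u$ to the pointwise comparison $u(x_0)\geq w(x_0)$; this is exactly what the standard balayage/representation theory for $s$-superharmonic functions delivers once one fixes the l.s.c.\ representative, and it is the main place where the hypothesis $u\in L_s(\rN)$ is used (to guarantee that the $s$-harmonic extension $w$ is defined at $x_0$).
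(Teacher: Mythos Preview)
Your argument is correct and follows the same overarching idea as the paper --- compare the kernel at $y$ with the kernel at the reflected point $y_\lambda$, use antisymmetry to cancel, and exploit strict positivity of the difference on $\Om_\lambda$ --- but the implementation differs. The paper works with Silvestre's mean value inequality $u(x_0)\ge \int_{\rN} u(x)\,\tau_\gamma(x-x_0)\,dx$, where $\tau_\gamma=(-\Delta)^s\Gamma_\gamma$ and $\Gamma_\gamma$ is a $C^{1,1}$ truncation of the fundamental solution; it then proves $\tau_\gamma(x-x_0)\ge \tau_\gamma(x_\lambda-x_0)$ on $\Om_\lambda$, so by antisymmetry $\int u\,\tau_\gamma\ge 0$. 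If $u(x_0)=0$ this integral vanishes, and the strict positivity of the antisymmetrized kernel immediately forces $u\equiv 0$ a.e.\ in $\Om_\lambda$ (hence in $\rN$). No limiting argument in $r$ is needed. Your route via the Poisson kernel of a ball and the comparison $u\ge w$ is equally legitimate, but it trades the one-line endgame of the paper for a small-$r$ limit; what it buys is that the Poisson kernel $P_r(x_0,\cdot)$ has an explicit closed form, so the monotonicity $P_r(x_0,y)>P_r(x_0,y_\lambda)$ is transparent, whereas the paper has to argue it separately for $x\in\Om_\lambda\setminus B_\gamma^N(x_0)$ and for $x\in B_\gamma^N(x_0)$.

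Two minor points. First, the passage from distributional $s$-superharmonicity to the pointwise comparison $u(x_0)\ge w(x_0)$ that you flag as ``delicate'' is not literally Silvestre's Proposition~2.15 (which uses $\tau_\gamma$, not $P_r$); it follows from standard fractional potential theory for l.s.c.\ representatives, but you should cite a source for the Poisson-kernel version rather than Silvestre. Second, your final limit is simpler than you suggest: since $u\in L_s(\rN)$ implies $u\in L^1_{loc}(\rN)$, absolute continuity of the integral gives $\int_{B_r^N(x_0)} u\to 0$ as $r\to 0$ directly --- there is no need to invoke the l.s.c.\ representative or select a special subsequence of radii.
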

\begin{proof}
 Since $(-\Delta)^su \geq 0$ in $\Om$ and $u \in L_s(\rN)$ so by Proposition 2.15 of \cite{S} we have, $u$ is lower semicontinuous in $\Om$ and satisfies the following
\begin{align}\label{mp_as1}
u(x_0) \geq \int_{\rN} u(x)\tau_\gamma(x-x_0) dx,
\end{align}
 for any $x_0 \in \Om$ and $\gamma < \operatorname{dist}(x_0,\partial\Om).$ Here, $\tau_\gamma(x) := (-\Delta)^s\Gamma_\gamma(x)$, $\Gamma_\gamma(x) : = \frac{\Gamma(x/\gamma)}{\gamma^{N-2s}}$, and $\Gamma$ is a $C^{1,1}$ regularization of $\Phi(x) : = \frac{1}{|x|^{N-2s}}$ such that 
 \begin{align*}
 \Gamma &\equiv \Phi \ \text{in } \rN\setminus B_1^N(0),\\
 \Gamma &\leq \Phi \ \text{in }  B_1^N(0). 
 \end{align*}
Next, we claim that $\tau_\gamma(x-x_0) \geq \tau_\gamma(x_\lambda-x_0)$, $\forall x\in \Om_\lambda.$ To prove this, we notice that, for $x\in \Om_\lambda \setminus B^N_\gamma(x_0)$
\begin{align*}
\tau_\gamma(x-x_0) &= P.V. \int_{\rN}\frac{\Gamma_\gamma(x-x_0)- \Gamma_{\gamma}(y)}{\md{x-x_0-y}^{N+2s}} dy\\
&= \int_{B_\gamma^N(x_0)} \frac{\Phi(y-x_0)-\Gamma_\gamma(y-x_0)}{\md{x-y}^{N+2s}} dy.
\end{align*}
To get the last equality we have used the fact that $\Phi$ is the fundamental solution of the fractional $s$-laplacian. Similarly, we have 
\begin{align*}
\tau_\gamma(x_\lambda-x_0)=\int_{B_\gamma^N(x_0)} \frac{\Phi(y-x_0)-\Gamma_\gamma(y-x_0)}{\md{x_\lambda-y}^{N+2s}} dy.
\end{align*}
Clearly, $\md{x-y} \leq \md{x_\lambda - y }$ if $x\in \Om_\lambda \setminus B_\gamma^N(x_0 )$ and $y \in B_\gamma^N(x_0 ).$ Hence, $\tau_\gamma(x-x_0) \geq \tau_\gamma(x_\lambda-x_0)$, $\forall x\in \Om_\lambda\setminus B_\gamma^N(x_0 ) .$ Now, let $x\in B_\gamma^N(x_0).$ Then, for $\gamma$ small 
\begin{align*}
\tau_\gamma(\tilde{x}-x_0)\leq \frac{\gamma^{2s}}{\md{\tilde{x}-x_0}^{N+2s}} \ ,\ \text{for } \md{\tilde{x}-x_0}\geq \frac{\operatorname{dist}(x_0,\partial \Om_\lambda)}{2},
\end{align*}
where $C>0$ is a generic constant. Hence, for $\gamma$ sufficiently small $\tau_\gamma(x_\lambda-x_0) \leq C$, whereas $\tau_\gamma(x-x_0) = \frac{1}{\gamma^N}\tau_1\bct{\frac{x-x_0}{\gamma}}\geq C.$ This concludes our claim. Now, consider 
\begin{align*}
\int_{\rN} u(x) \tau_\gamma(x-x_0) dx = \int_{\Om_\lambda}u(x)\tau_\gamma(x-x_0)dx +\int_{\rN\setminus \Om_\lambda} u(x)\tau_\gamma(x-x_0)dx.
\end{align*}
Since, for sufficiently small $\gamma$, $\tau_\gamma(x-x_0)\geq \tau_\gamma(x_\lambda-x_0)$, $\forall x\in \Om_\lambda,$ we have 
\begin{align*}
\int_{\Om_\lambda} u(x)\tau_\gamma(x-x_0)dx\geq \int_{\Om_\lambda}u(x)\tau_{\gamma}(x_\lambda-x_0)dx &= -\int_{\Om_\lambda} u(x_\lambda)\tau_{\gamma}(x_\lambda-x_0)dx\\ &= -\int_{\rN \setminus \Om_\lambda}u(y)\tau_{\gamma}(y-x_0)dy.
\end{align*}
Hence, $\int_{\rN}u(x)\tau_\gamma(x-x_0)dx\geq 0.$ Now, if possible, let us assume that there exists $x_0 \in \Om$ such that $u(x_0) \leq 0.$ Then, for $\gamma < \operatorname{dist}(x_0,\partial\Om) $ small enough we have from \eqref{mp_as1} 
\begin{align*}
0\geq u(x_0) \geq \int_{\rN} u(x)\tau_\gamma(x-x_0)dx \geq 0.
\end{align*}
So, $\int_{\rN} u(x)\tau_\gamma(x-x_0)dx= 0.$ Form here, using $u$ is antisymmetric and non negative on $\Om_\lambda$ one can easily conclude $u\equiv 0$ a.e. in $\Om_\lambda.$ This proves the lemma.
\end{proof}
\subsection{Proof of Theorem \ref{cyl_sym}.}
\begin{proof}
For $u,v \in \dot{\mcal{H}}_\beta^s(\rN)$ we define 
\begin{align*}
E[u,v] : = \int_{\rN}\int_{\rN} \frac{\bct{u(x)-u(y)}\bct{v(x)-v(y)}}{\md{x-y}^{N+2s}}dxdy - \beta \int_{\rN} \frac{u(x)v(x)}{|x'|^{2s}}dx.
\end{align*}
Also, for $\lambda\in \ro$ we define the following sets in $\rN$
\begin{align*}
\Om_{\lambda} := \{x_1<\lambda\}, \ \text{and } \Om'_{\lambda}: = \{x_{k+1}<\lambda\}. 
\end{align*}
Suppose $u$ solves \eqref{flap_cyl}. We will show that, $u$ is symmetric with respect to $\partial\Om_{0}= \{x_1=0\}$ and there exist $\lambda_0\in \ro$ such that for any fixed $x'\neq 0$, $u$ is symmetric with respect to $\partial\Om'_{\lambda_0}$. Once we show this, the rest of the proof will follow from standard arguments.
\par \textbf{Step 1 :} In this step we will show that $u$ is symmetric with respect to $\partial\Om_{0}$. Let $\lambda<0$ and $w_{\lambda} :=u-u_\lambda  $, where $u_{\lambda}(x):= u(x_\lambda)$ and $x_\lambda : = (2\lambda-x_1,x_2,\dots,x_N).$ We also define
\begin{align*}
v_\lambda(x):=
\begin{cases}
 \bct{u-u_\lambda}^+(x) ,\ \text{if } x\in \Om_\lambda \\  \bct{u-u_\lambda}^-(x), \ \text{if }  x\in \rN \setminus \Om_\lambda, 
\end{cases}
\end{align*}

$P_\lambda:= \operatorname{supp}v_\lambda \cap \Om_\lambda $ and $Q_\lambda := \operatorname{supp}v_\lambda \cap \bct{\rN\setminus \Om_\lambda}.$ Here, for any real number $a$, $a^+$ and $a^-$ represents $\max\{a,0\}$ and $\min\{a,0\}$ respectively. Clearly, for each $\lambda<0$ and $0\leq t<2s$
\begin{align}\label{crit_est}
\int_{\rN} \frac{|v_\lambda|^{2^*_t}}{|x'|^t} \leq \int_{P_\lambda} \frac{u^{2^*_t}}{|x'|^t} + \int_{Q_\lambda}  \frac{u_\lambda^{2^*_t}}{|x'|^t} = \int_{P_\lambda} \frac{u^{2^*_t}}{|x'|^t} + \int_{P_\lambda}  \frac{u^{2^*_t}}{|x'_\lambda|^t}<\infty,
\end{align}
where to get the finiteness we have used \eqref{asy_est}. However, it is not clear whether $v_\lambda$ belongs to $\mcal{\dot{H}}^s_\beta(\rN)$ or not. So, we will approximate $v_\lambda$ in a proper way.
 Let $\eta \in C^\infty(\rk)$ be such that $0\leq \eta \leq 1$ and  
\begin{align*}
\eta(x') = \begin{cases}
0, \ \text{if } |x'|<1 \\ 1 , \ \text{if } |x'|\geq 2.
\end{cases}
\end{align*}

For $\epsilon>0$, we define
\begin{align*}
\eta_{\epsilon}(x'):= \eta\bct{\frac{x'}{\epsilon}} \ \text{and } \eta_{\epsilon,\lambda}(x'): = \eta\bct{\frac{x'_\lambda}{\epsilon}}.
\end{align*}
For $h>0$, we also define $\psi_h(x): = \psi(x/h)$ and $\psi_{h,\lambda}(x):=\psi(x_\lambda/h) $, where $\psi\in C_c^\infty(\rN)$ such that $0\leq \psi\leq 1.$
\begin{align*}
\psi(x)= \begin{cases}
1, \ \text{if }|x|<1 \\ 0 , \ \text{if } |x|\geq 2.
\end{cases}
\end{align*}
We further define, $\phi_{\epsilon,h,\lambda}(x',x''):= \eta_{\epsilon}(x')\eta_{\epsilon,\lambda}(x')\psi_h(x)\psi_{h,\lambda}(x)$, $\Phi(x) : = \phi^2_{\epsilon,h,\lambda}(x)v_\lambda(x)$ and \\ $\tilde{\Phi}(x):= \phi_{\epsilon,h,\lambda}(x)v_{\lambda}(x)$. Then

\begin{align*}
E[u_\lambda,\Phi] &= \int_{\rN}\int_{\rN}\frac{\bct{u_\lambda(x)-u_\lambda(y)}\bct{\Phi(x)-\Phi(y)}}{\md{x-y}^{N+2s}}dx dy-\beta\int_{\rN}\frac{u_\lambda(x)\Phi(x)}{|x'|^{2s}} \\ &= \int_{\rN}\int_{\rN}\frac{\bct{u(x)-u(y)}\bct{\Phi_\lambda(x)-\Phi_\lambda(y)}}{\md{x-y}^{N+2s}}dx dy-\beta\int_{\rN}\frac{u(x)\Phi_\lambda(x)}{|x'|^{2s}} \\& +\beta\int_{\rN}u(x)\Phi_\lambda(x) \left[\frac{1}{|x'|^{2s}}-\frac{1}{\md{x'_\lambda}^{2s}}\right] dx,
\end{align*}
where to get the last equality we have used the fact that $\Phi\in C_c^{0,1}(\rN_k)$ supported away from $\partial \Om_{2\lambda}$. Now, using $\Phi_\lambda$ as a test function in \eqref{flap_cyl} we get
\begin{align*}
E[u_\lambda,\Phi]= \int_{\rN}\frac{u^{2^*_t-1}(x)\Phi_\lambda(x)}{|x'|^t} + \beta\int_{\rN}u(x)\Phi_\lambda(x) \left[\frac{1}{|x'|^{2s}}-\frac{1}{\md{x'_\lambda}^{2s}}\right] dx
\end{align*}



Observing that $v_\lambda$ is odd with respect to the reflection along $\partial\Om_\lambda$ and  
\begin{align*}
\left[\frac{1}{\md{x'}^{t}}-\frac{1}{\md{x'_\lambda}^{t}}\right]v_\lambda(x) \ \text{and } \left[\frac{1}{\md{x'}^{2s}}-\frac{1}{\md{x'_\lambda}^{2s}}\right]v_\lambda(x) 
\end{align*}
 both are nonpositive for all $x\in \rN$, we arrive at the following inequality
 \begin{align}\label{cyl_sym1}
 E[w_\lambda,\Phi] &\leq \int_{\rN} \bct{u^{2^*_t-1}-u_\lambda^{2^*_t-1}}(x)\Phi(x) \frac{dx}{|x'|^t} \notag\\ &\leq C \left[\int_{P_\lambda}u^{2^*_t-2}\frac{\tilde{\Phi}^2}{|x'|^t}dx + \int_{Q_\lambda}u_\lambda^{2^*_t-2}\frac{\tilde{\Phi}^2}{|x'|^t}dx\right], 
 \end{align}
 where $C>0$ is a generic constant. Let $\xi = \frac{2^*t}{2^*-2^*_t+2}.$ Clearly, $0\leq\xi <2s.$ Now, using H\"older inequality in \eqref{cyl_sym1} we get

 \begin{align}\label{cyl_sym2}
 E[w_\lambda,\Phi] & \leq C \bct{\int_{P_\lambda}u^{2^*}}^{\frac{2^*_t-2}{2^*}}\bct{\int_{P_\lambda}\frac{\md{\tilde{\Phi}}^{2^*_\xi}}{\md{x'}^\xi}}^{\frac{2}{2^*_\xi}} + C\bct{\int_{Q_\lambda}u_\lambda^{2^*}}^{\frac{2^*_t-2}{2^*}}\bct{\int_{Q_\lambda}\frac{\md{\tilde{\Phi}}^{2^*_\xi}}{\md{x'}^\xi}}^{\frac{2}{2^*_\xi}} \notag \\ &\leq C\bct{\int_{P_\lambda}u^{2^*}}^{\frac{2^*_t-2}{2^*}}\bct{\int_{\rN}\frac{\md{\tilde{\Phi}}^{2^*_\xi}}{\md{x'}^\xi}}^{\frac{2}{2^*_\xi}}.
 \end{align}
 Next, we will estimate $E[w_\lambda,\Phi]$ from below. For this we notice
 \begin{align}\label{cyl_sym3}
 &\bct{w_\lambda(x)-w_\lambda(y)}\bct{\Phi(x)-\Phi(y)}\notag\\  &= \bct{w_\lambda(x)-w_\lambda(y)} \bct{\phi_{\epsilon,h,\lambda}^2(x)v_\lambda(x)-\phi_{\epsilon,h,\lambda}^2(y)v_\lambda(y)} \notag\\&= \md{\tilde{\Phi}(x)-\tilde{\Phi}(y)}^2 \notag \\ &- \left[\phi_{\epsilon,h,\lambda}^2(x)v_{\lambda}(x)w_\lambda(y)+\phi_{\epsilon,h,\lambda}^2(y)v_\lambda(y)w_\lambda(x)-2\phi_{\epsilon,h,\lambda}(x)\phi_{\epsilon,h,\lambda}(y)v_\lambda(x)v_\lambda(y)\right]\notag\\ &\geq
 \begin{cases}
  \md{\tilde{\Phi}(x)-\tilde{\Phi}(y)}^2 - \phi_{\epsilon,h,\lambda}^2(y)v_\lambda(y)w_\lambda(x), \ \text{if } (x,y)\in P_\lambda^c\times\bct{P_\lambda\cup Q_\lambda}\cup Q_\lambda^c\times \bct{P_\lambda \cup Q_\lambda} \\ \md{\tilde{\Phi}(x)-\tilde{\Phi}(y)}^2 - \phi_{\epsilon,h,\lambda}^2(x)v_{\lambda}(x)w_\lambda(y), \ \text{if } (x,y)\in \bct{P_\lambda \cup Q_\lambda}\times P_\lambda^c \cup \bct{P_\lambda \cup Q_\lambda}\times Q_\lambda^c \\ \md{\tilde{\Phi}(x)-\tilde{\Phi}(y)}^2-\md{\phi_{\epsilon,h,\lambda}(x) -\phi_{\epsilon,h,\lambda}(y)}^2v_\lambda(x)v_{\lambda}(y) , \ \text{otherwise,}
 \end{cases}
 \end{align}
where $P_\lambda^c : = \Om_\lambda\setminus P_\lambda$ and $Q_\lambda^c:=\Om_\lambda^c\setminus Q_\lambda$. Since $\phi_{\epsilon,h,\lambda}$ symmetric, $v_\lambda$ and $w_\lambda$ antisymmetric with respect to the reflection along $\partial\Om_\lambda$, so 
considering the sign of $v_\lambda$ and $w_\lambda$ in respective region we have
 \begin{align*}
 &\int_{P_\lambda^c}\int_{P_\lambda} \phi_{\epsilon,h,\lambda}^2(y)v_\lambda(y)w_\lambda(x)\frac{dydx}{\md{x-y}^{N+2s}} + \int_{P_\lambda^c}\int_{Q_\lambda}  \phi_{\epsilon,h,\lambda}^2(y)v_\lambda(y)w_\lambda(x)\frac{dydx}{\md{x-y}^{N+2s}} \leq 0, \\& \int_{Q_\lambda^c}\int_{P_\lambda} \phi_{\epsilon,h,\lambda}^2(y)v_\lambda(y)w_\lambda(x)\frac{dydx}{\md{x-y}^{N+2s}} + \int_{Q_\lambda^c}\int_{Q_\lambda}  \phi_{\epsilon,h,\lambda}^2(y)v_\lambda(y)w_\lambda(x)\frac{dydx}{\md{x-y}^{N+2s}} \leq 0, \\
 &\int_{P_\lambda}\int_{P_\lambda^c} \phi_{\epsilon,h,\lambda}^2(x)v_\lambda(x)w_\lambda(y)\frac{dydx}{\md{x-y}^{N+2s}} + \int_{Q_\lambda}\int_{P_\lambda^c} \phi_{\epsilon,h,\lambda}^2(x)v_\lambda(x)w_\lambda(y)\frac{dydx}{\md{x-y}^{N+2s}} \leq 0, \\ &\int_{P_\lambda}\int_{Q_\lambda^c} \phi_{\epsilon,h,\lambda}^2(x)v_\lambda(x)w_\lambda(y)\frac{dydx}{\md{x-y}^{N+2s}} + \int_{Q_\lambda}\int_{Q_\lambda^c} \phi_{\epsilon,h,\lambda}^2(x)v_\lambda(x)w_\lambda(y)\frac{dydx}{\md{x-y}^{N+2s}} \leq 0.
 \end{align*}
Hence, integrating \eqref{cyl_sym3} we have 
\begin{align}\label{cyl_sym4}
E[w_\lambda,\Phi] &\geq \int_{\rN}\int_{\rN} \frac{\md{\tilde{\Phi}(x)-\tilde{\Phi}(y)}^2}{\md{x-y}^{N+2s}}dx dy- \beta \int_{\rN}\tilde{\Phi}^2(x)\frac{dx}{|x'|^{2s}} \notag\\ &-2 \int_{P_\lambda}\int_{P_\lambda}  \frac{v_\lambda(x)v_\lambda(y)\md{\phi_{\epsilon,h,\lambda}(x)-\phi_{\epsilon,h,\lambda}(y)}^2}{|x-y|^{N+2s}}dxdy
\end{align}
Now, consider
\begin{align}\label{cyl_sym5}
I_{\epsilon,h,\lambda}&:= \int_{P_\lambda}\int_{P_\lambda}  \frac{v_\lambda(x)v_\lambda(y)\md{\phi_{\epsilon,h,\lambda}(x)-\phi_{\epsilon,h,\lambda}(y)}^2}{|x-y|^{N+2s}}dxdy \notag\\ &\leq \int_{P_\lambda}\int_{P_\lambda}  \frac{u^2(x)\md{\phi_{\epsilon,h,\lambda}(x)-\phi_{\epsilon,h,\lambda}(y)}^2}{|x-y|^{N+2s}}dxdy \notag \\ &\leq 2\int_{\rN}\int_{\rN} \frac{u^2(x)\md{\psi_h(x)-\psi_h(y)}^2}{\md{x-y}^{N+2s}} + 2\int_{\rN}\int_{\rN} \frac{u_\lambda^2(x)\md{\psi_h(x)-\psi_h(y)}^2}{\md{x-y}^{N+2s}} \notag \\ &+ 2\int_{P_\lambda}\int_{P_\lambda} \frac{u^2(x)\psi^2_{h,\lambda}(y)\md{\eta_\epsilon(x')\eta_{\epsilon,\lambda}(x')-\eta_\epsilon(y')\eta_{\epsilon,\lambda}(y')}^2}{\md{x-y}^{N+2s}}dx dy.
\end{align}
By dividing $\rN$ in appropriate domain and using \eqref{asy_est} (which essentially shows that $u\in L^2(\rN)$, as $2\leq k\leq N-2$) one can estimate the first two term of 
\eqref{cyl_sym} to arrive at
\begin{align*}
&\int_{\rN}\int_{\rN} \frac{u^2(x)\md{\psi_h(x)-\psi_h(y)}^2}{\md{x-y}^{N+2s}} = \sigma_h \ \text{and } \\ &\int_{\rN}\int_{\rN} \frac{u_\lambda^2(x)\md{\psi_h(x)-\psi_h(y)}^2}{\md{x-y}^{N+2s}}= \sigma_{h,\lambda},
\end{align*}
 where for any $\lambda\in \ro$ both $\sigma_h$ and $\sigma_{h,\lambda}$ goes to zero as $h \rightarrow \infty$. To estimate the last term of \eqref{cyl_sym5} we use the fact that $u\in L^\infty(\Om_\lambda)$. Hence, finally we arrive at the following inequality
 \begin{align}\label{cyl_sym6}
 I_{\epsilon,h,\lambda} &\leq C_{N,s,t,\beta} \left[\sigma_h+\sigma_{h,\lambda}\right]\notag \\ &+ C_{N,s,t,\beta}\nrm{u}_{L^\infty(\Om_\lambda)}\int_{P_\lambda}\int_{P_\lambda} \frac{\psi^2_h(y)\md{\eta_\epsilon(x')\eta_{\epsilon,\lambda}(x')-\eta_\epsilon(y')\eta_{\epsilon,\lambda}(y')}^2}{\md{x-y}^{N+2s}}dx dy \notag \\ &\leq C_{N,k,s,t,\beta} \left[\sigma_h+\sigma_{h,\lambda}\right] \notag \\ &+h^{N-k}C_{N,k,s,t,\beta}\nrm{u}_{L^\infty(\Om_\lambda)} \int_{\rk}\int_{\rk} \frac{\md{\eta_\epsilon(x')-\eta_\epsilon(y')}^2}{\md{x'-y'}^{k+2s}} dx'dy'  \notag \\ &\leq C_{N,k,s,t,\beta} \left[\sigma_h+\sigma_{h,\lambda}+ h^{N-k}\epsilon^{k-2s}\nrm{u}_{L^\infty(\Om_\lambda)} \right].
 \end{align}
 Hence, combining \eqref{cyl_sym2}, \eqref{cyl_sym5}, \eqref{cyl_sym6} and using \eqref{FHSM_ineq} we conclude that there exist a constant $C_{N,k,s,t,\beta}>0$ depending on $N,k,s,t$ and $\beta$ such that the following holds for any $h>0$, $\epsilon>0$ and $\lambda<0$
 \begin{align*}
 \bct{\int_{\rN}\frac{\md{\tilde{\Phi}}^{2^*_\xi}}{\md{x'}^\xi}}^{\frac{2}{2^*_\xi}}&-\left[\sigma_h+\sigma_{h,\lambda} + h^{N-k}\epsilon^{k-2s}\nrm{u}_{L^\infty(\Om_\lambda)}\right] \\ &\leq C_{N,k,s,t,\beta}\bct{\int_{P_\lambda}u^{2^*}}^{\frac{2^*_t-2}{2^*}}\bct{\int_{\rN}\frac{\md{\tilde{\Phi}}^{2^*_\xi}}{\md{x'}^\xi}}^{\frac{2}{2^*_\xi}}
 \end{align*}
First letting $\epsilon \rightarrow 0$ then letting $h\rightarrow \infty$ and using DCT (because \eqref{crit_est} hold for each $\lambda<0$) we arrive at 
\begin{align}\label{cyl_sym7}
C_{N,k,s,t,\beta}\bct{\int_{\rN}\frac{\md{v_\lambda(x)}^{2^*_\xi}}{\md{x'}^\xi}}^{\frac{2}{2^*_\xi}} \leq \bct{\int_{P_\lambda}u^{2^*}}^{\frac{2^*_t-2}{2^*}}\bct{\int_{\rN}\frac{\md{v_\lambda(x)}^{2^*_\xi}}{\md{x'}^\xi}}^{\frac{2}{2^*_\xi}}
\end{align}

If $v_\lambda \neq 0$ a.e., then using \eqref{cyl_sym7}, we have that there exists a constant $C\equiv C(N,k,s,t,\beta)$ such that 
\begin{align}\label{cyl_sym8}
0<C(N,k,s,t,\beta) \leq \int_{P_\lambda}\md{u}^{2^*}, \ \text{for any } \lambda <0.
\end{align}
But, \eqref{cyl_sym6} yields a contradiction for large negative values of $\lambda $. So, for large negative values of $\lambda$ we must have $v_\lambda \equiv 0$ a.e. in  $\Om^i_\lambda.$ Hence, the set defined by
\begin{align*}
A := \{\lambda \leq 0 | u\leq u_\lambda \ \text{a.e in } \Om_\mu , \ \forall \mu \leq \lambda \}
\end{align*}
is non empty. Let $\bar{\lambda} := \sup A. $ For any $x'\in \rk$ and $R>0$, we also define the cylinder $C_R(x'):= B_R^k(x')\times \rNk.$
We claim that $\bar{\lambda} = 0$ and $u\leq u_{\bar{\lambda}}$ a.e. in $\Om_{\bar{\lambda}}$. If possible, let us assume that $\bar{\lambda}<0$. We define, $\tilde{w}_{\bar{\lambda}} := -w_{\bar{\lambda}}= u_{\bar{\lambda}}-u$. Let $\Om \subset \Om_{\bar{\lambda}}$. Since  $w_{\bar{\lambda}} \in L_s(\rN)\cap \dot{H}^s_\beta(\rN)$ and $u_{\bar{\lambda}}\geq u$ a.e. in $\Om_{\bar{\lambda}}$, $(-\Delta)^s \tilde{w}_{\bar{\lambda}} \geq 0$ in the distributional sense in $\Om.$ So, $w_{\bar{\lambda}}$ is lower semicontinuous as well as antisymmetric and a.e. nonnegative (by continuity) in $\Om_{\bar{\lambda}}.$ Hence, by Lemma \ref{mp_as} we have either $w_{\bar{\lambda}} =0$ a.e. in $\rN$ or $w_{\bar{\lambda}}>0$ in $\Om.$ We claim that, the second case can not occur. If it occurs, then $w_{\bar{\lambda}}>0$ and lower semicontinuous  in 
$\Om_{\bar{\lambda}}$ . 
Hence, by continuity of $u$, lower semicontinuity of $w_{\bar{\lambda}}$ and definition of $\bar{\lambda}$ we have for any $R_1>0$, $\delta>0$ small  and $R>0$ large, there exists $\epsilon_0(R_1,R,\delta)>0$ such that $\bar{\lambda}+\epsilon_0<0$ and 
\begin{align}
P_{\bar{\lambda}+\epsilon} \cap \Om_{\bar{\lambda}-\delta}\cap C_{R_1}^c(0) \subset \rN \setminus B^N_R(0), \ \forall 0<\epsilon \leq \epsilon_0.\notag
\end{align}
Now, since $\bar{\lambda}$ is the supremum so $v_{\bar{\lambda}+\epsilon}$ is not zero in a positive measure set, for any $0<\epsilon\leq\epsilon_0$. Hence, using \eqref{cyl_sym8} we have  \\
\begin{align*}
0<C(N,k,s,t,\beta) &\leq \int_{P_{\bar{\lambda}+\epsilon}} u^{2^*} \\ &\leq \int_{P_{\bar{\lambda}+\epsilon}\cap C_{R_1}(0)} u^{2^*} + \int_{P_{\bar{\lambda}+\epsilon}\cap C_{R_1}^c(0)} u^{2^*}  \\ &\leq \int_{C_{R_1}(0)} u^{2^*}  + \int_{P_{\bar{\lambda}+\epsilon}\cap \Om_{\bar{\lambda}}\cap C_{R_1}^c(0)} u^{2^*} + \int_{P_{\bar{\lambda}+\epsilon}\cap \Om^c_{\bar{\lambda}}\cap C^c_{R_1}(0)} u^{2^*} \\ &\leq \int_{C_{R_1}(0)} u^{2^*} +\int_{\rN\setminus B^N_R(0)} u^{2^*} +\int_{\Om_{\bar{\lambda}+\epsilon}\setminus \Om_{\bar{\lambda}-\delta}} u^{2^*}. 
\end{align*}
Now, first choosing $R_1$ small, $R$ large, $\delta$ small and then choosing $\epsilon$ small we can make the R.H.S. of the above inequality strictly less that $C(N,k,s,t,\beta)$, which gives a contradiction. 
So, either 
$\bar{\lambda}= 0$ or if $\bar{\lambda}<0$ then 
$w_{\bar{\lambda}} = 0$ a.e. in $\rN$. 
In the second case, \\
$\bct{-\Delta}^s w_{\bar{\lambda}} =0$ in 
$\rN$ in the distributional sense. But, since $\bar{\lambda}<0$ so, $\bct{-\Delta}^s w_{\bar{\lambda}}>0$ in $\Om$, in the distributional sense, for any open set $\Om \Subset \Om_{\bar{\lambda}}$.  Which gives a contradiction. Hence, $\bar{\lambda} = 0.$

Repeating the same arguments for $\lambda>0$, we  can conclude that $u$ is symmetric decreasing in $x_1$ direction.

\par \textbf{Step 2 : }In this step, we will show that there exist $\lambda_0 \in \ro$ such that, for any $x'\in\rk$ fixed, $u$ is symmetric w.r.t the reflexion along $\partial \Om'_{\lambda_0}.$ We will only prove an analogous inequality of \eqref{cyl_sym8} derived in Step 1. Rest of the arguments will be similar to Step1. We will exclude that part. We notice that in this case $u$ may not be in $L^\infty(\Om'_\lambda)$. Because of this we cannot use similar arguments of Step 1 to derive \eqref{cyl_sym8}. We define $w_\lambda(x) = u(x)-u(x_\lambda),$ where $x_\lambda:= (x',2\lambda-x_{k+1},x_{k+2},\dots,x_N)= (x',x''_\lambda).$ Then clearly, 

\begin{align*}
W_\lambda := \mcal{P}_{\tilde{\alpha}}[w_\lambda] := |x|^{\tilde{\alpha}} w_\lambda = U-U_\lambda,
\end{align*}
where we have denoted $\mcal{P}_{\tilde{\alpha}}[u]$ by $U$ and for any $0<\beta\leq \mcal{C}_{N,k,s}$, $\ta\in [0,\ta\leq \alpha]$ uniquely determined by \eqref{alphatilde}. Since, $u$ solves \eqref{flap_cyl} so $W_\lambda$ satisfies 

\begin{align}\label{cyl_sym9}
L^s_{\tilde{\alpha}}W_\lambda = A_\lambda W_\lambda \ \text{weakly in } \mcal{\dot{H}}^{s,\tilde{\alpha}}(\rN),
\end{align}
where
\begin{align*}
A_\lambda := \frac{1}{\md{x'}^{t+\tilde{\alpha}2^*_t}} \frac{U^{2^*_t-1}-U_\lambda^{2^*_t-1}}{U-U_\lambda}.
\end{align*}

Also, define  
\begin{align*}
v_\lambda(x) : = \begin{cases}
w_\lambda^+(x), \text{ if } x\in \Om'_\lambda \\ w^-_\lambda(x), \text{ if } x\in \bct{\Om'_\lambda}^c.
\end{cases}
\end{align*}
Then 
\begin{align*}
V_\lambda(x) : =\mcal{P}_{\tilde{\alpha}}[v_\lambda] = \begin{cases}
W_\lambda^+(x), \text{ if } x\in \Om'_\lambda \\ W^-_\lambda(x), \text{ if } x\in \bct{\Om'_\lambda}^c.
\end{cases}
\end{align*}
Similarly to the Step 1, we define 
\begin{align*}
P'_\lambda &:= \operatorname{supp}v_\lambda \cap \Om'_\lambda = \operatorname{supp}V_\lambda \cap \Om'_\lambda \\ Q'_\lambda &:= \operatorname{supp}v_\lambda \cap \bct{\Om'_\lambda}^c = \operatorname{supp}V_\lambda \cap \bct{\Om'_\lambda}^c.
\end{align*}
We define, $\phi_{\epsilon,h,\lambda}(x',x''):= \eta_{\epsilon}(x')\psi_h(x)\psi_{h,\lambda}(x)$, $\Phi(x) : = \phi^2_{\epsilon,h,\lambda}(x)V_\lambda(x)$ and \\ $\tilde{\Phi}(x):= \phi_{\epsilon,h,\lambda}(x)V_{\lambda}(x)$, where $\psi_h$ and $\psi_{h,\lambda}$ are same as defined in Step 1. Also, for $\epsilon<1$, $\eta_\epsilon\in C^{0,1}(\rk)$ and  satisfies the following
\begin{align*}
\eta_\epsilon(x')=\begin{cases} 0 , \text{ if } |x'|<\epsilon^2 \\ \frac{\ln\bct{\frac{|x'|}{\epsilon^2}}}{|\ln\epsilon|} , \text{ if } \epsilon^2\leq |x'|\leq \epsilon \\ 1 , \text{ if } |x'|>\epsilon.
\end{cases}
\end{align*}
Then clearly, $\Phi \in C_c^{0,1}(\rN_k)$ and so using $\Phi$ as a test function in \eqref{cyl_sym9} we get  

\begin{align}\label{cyl_sym11}
\langle L^s_{\tilde{\alpha}}W_\lambda, \Phi \rangle  = \int_{\rN} \tilde{\Phi}^2 A_\lambda(x) dx.
\end{align}
In the next few paragraphs $C$ will denote a positive constant possibly depending on  $N,s,k,t,\beta$. We estimate
\begin{align*}
\int_{\rN} \tilde{\Phi}^2 A_\lambda(x) dx &= \int_{\Om'_\lambda} \tilde{\Phi}^2(x) A_\lambda(x)dx + \int_{\rN\setminus \Om'_\lambda} \tilde{\Phi}(x) A_\lambda(x)dx \\ &\leq C\int_{P'_\lambda} \frac{U^{2^*_t-2}}{\md{x'}^{\tilde{\alpha}(2_t^*-2)}} \frac{\tilde{\Phi}^2}{\md{x'}^{t+\tilde{\alpha}(2^*_t-2)}} dx + C\int_{Q'_\lambda} \frac{U_\lambda^{2^*_t-2}}{\md{x'}^{\tilde{\alpha}(2_t^*-2)}} \frac{\tilde{\Phi}^2}{\md{x'}^{t+\tilde{\alpha}(2^*_t-2)}} dx,
\end{align*}
 Now using H\"older's inequality we have 
\begin{align}\label{cyl_sym12}
\int_{\rN} \tilde{\Phi}^2(x) A_\lambda(x) dx &\leq C\bct{\int_{P'_\lambda}U^{2^*}\frac{dx}{|x'|^{\tilde{\alpha}2^*}}}^{\frac{2^*_t-2}{2^*}}\bct{\int_{\rN}\frac{\md{\tilde{\Phi}}^{2^*_\xi}}{\md{x'}^{\xi+\tilde{\alpha}2^*_\xi}}}^{\frac{2}{2^*_\xi}} \notag \\&\leq C\bct{\int_{P'_\lambda}u^{2^*}}^{\frac{2^*_t-2}{2^*}}\bct{\int_{\rN}\frac{\md{\tilde{\Phi}}^{2^*_\xi}}{\md{x'}^{\xi+\tilde{\alpha}2^*_\xi}}}^{\frac{2}{2^*_\xi}},
\end{align}
where $\xi = \frac{t2^*}{2^*-2^*_t+2}.$ Next, we estimate  $\langle L^s_{\tilde{\alpha}}W_\lambda, \Phi\rangle$. Note that, proceeding similarly as in Step 1 we will arrive at 
\begin{align}\label{cyl_sym13}
\langle L^s_{\tilde{\alpha}}W_\lambda, \Phi\rangle &\geq \int_{\rN}\int_{\rN} \frac{\md{\tilde{\Phi}(x)-\tilde{\Phi}(y)}^2}{\md{x-y}^{N+2s} |x'|^{\tilde{\alpha}}|y'|^{\tilde{\alpha}}}dx dy \notag\\ &-2 \int_{P_\lambda}\int_{P_\lambda}  \frac{V_\lambda(x)V_\lambda(y)\md{\phi_{\epsilon,h,\lambda}(x)-\phi_{\epsilon,h,\lambda}(y)}^2}{|x-y|^{N+2s}|x'|^{\tilde{\alpha}}|y'|^{\tilde{\alpha}}}dxdy
\end{align}
Using $2ab\leq (a^2+b^2)$, with $a= \frac{V_\lambda(x)}{|x'|^{\ta}}$ and $b= \frac{V_\lambda(y)}{|y'|^{\ta}}$ whenever required, we estimate 
\begin{align}\label{cyl_sym14}
I_{\epsilon,h,\lambda}&:= \int_{P_\lambda}\int_{P_\lambda}  \frac{V_\lambda(x)V_\lambda(y)\md{\phi_{\epsilon,h,\lambda}(x)-\phi_{\epsilon,h,\lambda}(y)}^2}{|x-y|^{N+2s}|x'|^{\tilde{\alpha}}|y'|^{\tilde{\alpha}}}dxdy \notag\\ & \leq 2\int_{P_\lambda}\int_{P_\lambda}  \frac{V_\lambda(x)V_\lambda(y)\md{\psi_{h,\lambda}(x)-\psi_{h,\lambda}(y)}^2}{|x-y|^{N+2s}|x'|^{\tilde{\alpha}}|y'|^{\tilde{\alpha}}}dxdy  \notag \\ &+ 2\int_{P_\lambda}\int_{P_\lambda} \frac{V_\lambda(x)V_{\lambda}(y)\psi^2_{h,\lambda}(y)\md{\eta_\epsilon(x')-\eta_\epsilon(y')}^2}{\md{x-y}^{N+2s}|x'|^{\tilde{\alpha}}|y'|^{\tilde{\alpha}}}dx dy.
\notag \\ &\leq 2\int_{\rN}\int_{\rN} \frac{U^2(x)\md{\psi_h(x)-\psi_h(y)}^2}{|x'|^{2\tilde{\alpha}}\md{x-y}^{N+2s}} + 2\int_{\rN}\int_{\rN} \frac{U_\lambda^2(x)\md{\psi_h(x)-\psi_h(y)}^2}{|x'|^{2\tilde{\alpha}}\md{x-y}^{N+2s}} \notag \\ &+ 2\int_{P_\lambda}\int_{P_\lambda} \frac{V_\lambda(x)V_{\lambda}(y)\psi^2_{h,\lambda}(y)\md{\eta_\epsilon(x')-\eta_\epsilon(y')}^2}{\md{x-y}^{N+2s}|x'|^{\tilde{\alpha}}|y'|^{\tilde{\alpha}}}dx dy. \notag \\ &\leq 2\int_{\rN}\int_{\rN} \frac{u^2(x)\md{\psi_h(x)-\psi_h(y)}^2}{\md{x-y}^{N+2s}} + 2\int_{\rN}\int_{\rN} \frac{u_\lambda^2(x)\md{\psi_h(x)-\psi_h(y)}^2}{\md{x-y}^{N+2s}} \notag \\ &+ 2\nrm{U}^2_{L^\infty(\rN)}\int_{P_\lambda}\int_{P_\lambda} \frac{\psi^2_{h,\lambda}(y)\md{\eta_\epsilon(x')-\eta_\epsilon(y')}^2}{\md{x-y}^{N+2s}|x'|^{\tilde{\alpha}}|y'|^{\tilde{\alpha}}}dx dy,  
\end{align}
where in the last inequality we have used  Prposition \ref{linfty_est}. As remarked in the Step 1 we have
\begin{align*}
&\int_{\rN}\int_{\rN} \frac{u^2(x)\md{\psi_h(x)-\psi_h(y)}^2}{\md{x-y}^{N+2s}} = \sigma_h \ \text{and } \\ &\int_{\rN}\int_{\rN} \frac{u_\lambda^2(x)\md{\psi_h(x)-\psi_h(y)}^2}{\md{x-y}^{N+2s}}= \sigma_{h,\lambda},
\end{align*}
 where for any $\lambda\in \ro$ both $\sigma_h$ and $\sigma_{h,\lambda}$ goes to zero as $h \rightarrow \infty$.  Also, by Lemma \ref{log_cutoff} we conclude that 
\begin{align*}
\int_{P_\lambda}\int_{P_\lambda} \frac{\psi^2_{h,\lambda}(y)\md{\eta_\epsilon(x')-\eta_\epsilon(y')}^2}{\md{x-y}^{N+2s}|x'|^{\tilde{\alpha}}|y'|^{\tilde{\alpha}}}dx dy = \sigma_{\epsilon,h,\lambda},
\end{align*}
where for any $h>0$ and $\lambda \in \ro$, $\sigma_{\epsilon,h,\lambda}\rightarrow 0$ as $\epsilon \rightarrow 0$. So, form \eqref{cyl_sym14} we have \\ $I_{\epsilon,h,\lambda} \leq C\left(\sigma_h +\sigma_{h,\lambda}+ \sigma_{\epsilon,h,\lambda}\right).$

Hence, using \eqref{EFHSM_ineq}, \eqref{cyl_sym12} and \eqref{cyl_sym13} we have from \eqref{cyl_sym11} that there exist a constant $C_{N,k,s,t,\beta}>0$ depending on the indexed variables such that 
\begin{align*}
C_{N,k,s,t,\beta}\bct{\int_{\rN}\frac{\md{\tilde{\Phi}(x)}^{2^*_\xi}}{\md{x'}^{\xi+\tilde{\alpha}2^*_\xi}}dx}^{\frac{2}{2^*_\xi}} &- C\left(\sigma_h +\sigma_{h,\lambda}+ \sigma_{\epsilon,h,\lambda}\right) \\&\leq \bct{\int_{P'_\lambda}u^{2^*}}^{\frac{2^*_t-2}{2^*}}\bct{\int_{\rN}\frac{\md{\tilde{\Phi}(x)}^{2^*_\xi}}{\md{x'}^{\xi+\tilde{\alpha}2^*_\xi}}dx}^{\frac{2}{2^*_\xi}}.
\end{align*}
First letting $\epsilon \rightarrow 0$ then letting $h\rightarrow \infty$ and using DCT  we arrive at 
\begin{align*}
C_{N,k,s,t,\beta}\bct{\int_{\rN}\frac{\md{V_\lambda(x)}^{2^*_\xi}}{\md{x'}^{\xi+\tilde{\alpha}2^*_\xi}}}^{\frac{2}{2^*_\xi}} \leq \bct{\int_{P_\lambda}u^{2^*}}^{\frac{2^*_t-2}{2^*}}\bct{\int_{\rN}\frac{\md{V_\lambda(x)}^{2^*_\xi}}{\md{x'}^{\xi+\tilde{\alpha}2^*_\xi}}}^{\frac{2}{2^*_{\xi}}}
\end{align*}

Hence, for $V_\lambda \neq 0$ a.e. we have 
\begin{align*}
0<C_{N,k,s,t,\beta} \leq \int_{P'_\lambda}\md{u}^{2^*}, \ \text{for any } \lambda <0,
\end{align*}
which is the exact counterpart of inequality \eqref{cyl_sym8}.

\par Combining Step 1 and Step 2 we conclude that $u$ is cylindrically symmetric.
\end{proof}

\section{Appendix}  

\subsection{A Density Property} Main aim of this section is to derive Lemma \ref{rep_sob_sp}. The arguments are modifications of those in \cite{DV}, where the Muckenhoupt $A_1$ properties of the weights have been used crucially. We will sketch the proof by pointing out main steps. First, let us define
\begin{align*}
\mcal{W} := \{u\in L^{2^*}\left(\rN;\frac{1}{|x'|^{\tilde{\alpha}2^*}}\right) : \int_{\rN}\int_{\rN}\frac{\md{u(x)-u(y)}^2 dxdy}{\md{x-y}^{N+2s}|x'|^{\tilde{\alpha}}|y'|^{\tilde{\alpha}}}< \infty \},
\end{align*}
endowed with the following norm
\begin{align*}
\nrm{u}_{\mcal{W}} := \left[\left[ u\right]\right]_{s,\ta,\rN} + \nrm{u}_{2^*,\ta,\rN} .
\end{align*}
Here,
\begin{align*}
\nrm{u}_{2^*,\ta,\rN} : = \bct{\int_{\rN} \frac{\md{u(x)}^{2^*}}{|x'|^{\ta2^*}}dx}^{\frac{1}{2^*}},
\end{align*}
and the semi-norm $\left[\left[u\right]\right]_{s,\ta,\rN}$ is same, as defined in Section \ref{Prem}. We also, define the following:
\begin{align}\label{weights}
\text{when } &N' = 2N, \ w(z,z) = (z,z), \ \Theta(X) =|x'|^{\ta}|y'|^{\ta}, \ X=(x,y); \ x,y,z\in \rN,  \notag \\ \text{and when } &N' = N, \ w(z)= z,\ \Theta(X) = |x'|^{\ta2^*},\ X=x ;\ x,y\in \rN. 
\end{align}
Next, we will prove the following lemma.

\begin{lem}\label{log_cutoff}
Let $u\in C_c^\infty(\rN)$. Also, we consider $\eta_\epsilon$, defined by the following
\begin{align*}
\eta_\epsilon(x')=\begin{cases} 0 , \text{ if } |x'|<\epsilon^2 \\ \frac{\ln\bct{\frac{|x'|}{\epsilon^2}}}{|\ln\epsilon|} , \text{ if } \epsilon^2\leq |x'|\leq \epsilon \\ 1 , \text{ if } |x'|>\epsilon.
\end{cases}
\end{align*}
 Then, for any $0<\tilde{\alpha}\leq (k-2s)/2$, the following are true 
 \begin{itemize}
 \item[(i)] $\int_{\rN}\int_{\rN} \frac{\md{u(x)-u(y)}^2}{\md{x-y}^{N+2s}|x'|^{\tilde{\alpha}}|y'|^{\tilde{\alpha}}} dxdy<\infty ,$
 \item[(ii)] $\displaystyle\lim_{\epsilon\rightarrow 0} \int_{\rN}\int_{\rN} \frac{u^2(x)\md{\eta_\epsilon(x')-\eta_\epsilon(y')}^2}{\md{x-y}^{N+2s}|x'|^{\tilde{\alpha}}|y'|^{\tilde{\alpha}}}dxdy =0$.
 \end{itemize}
In particular, $\eta_\epsilon u\in C_c^{0,1}(\rN_k)$ converges to $u$ under the semi norm $\left[[.]\right]_{s,\tilde{\alpha},\rN}$, i.e $u\in \mcal{\dot{H}}^{s,\tilde{\alpha}}(\rN)$.
\end{lem}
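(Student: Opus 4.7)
My plan is to prove (i) and (ii) separately, then deduce the ``in particular'' claim from the two. For (i), the key observation is the pointwise AM--GM bound $2/(|x'|^{\ta}|y'|^{\ta}) \leq |x'|^{-2\ta} + |y'|^{-2\ta}$, which combined with symmetry in $(x,y)$ reduces the finiteness claim to
\begin{align*}
\int_{\rN} F(x)\,|x'|^{-2\ta}\,dx < \infty, \qquad F(x) := \int_{\rN} \frac{|u(x)-u(y)|^2}{|x-y|^{N+2s}}\, dy.
\end{align*}
I would split this inner integral at $|x-y|=1$ and use the Lipschitz bound $|u(x)-u(y)| \leq \min(L|x-y|,\,2\|u\|_{\infty})$ together with the compact support of $u$ to conclude $F(x) \leq C\min\{1,\,|x|^{-(N+2s)}\}$. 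Since $2\ta \leq k-2s < k$, the weight $|x'|^{-2\ta}$ is locally integrable against $dx$ on $\rN$ by Fubini in $(x',x'')$, and the decay of $F$ handles the tail, giving (i).

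For (ii), I would first integrate out $y''$ via the identity $\int_{\rNk}(|x'-y'|^2+|x''-y''|^2)^{-(N+2s)/2}\,dy'' = C_{N,k,s}|x'-y'|^{-(k+2s)}$ and set $v(x') := \int_{\rNk} u^2(x',x'')\,dx''$, a bounded function with compact support in $\rk$. Then (ii) reduces to showing
\begin{align*}
\mcal{J}_\epsilon := \int_{\rk}\int_{\rk}\frac{v(x')\,|\eta_\epsilon(x')-\eta_\epsilon(y')|^2}{|x'|^{\ta}\, |y'|^{\ta}\, |x'-y'|^{k+2s}}\,dx'\,dy' \longrightarrow 0.
\end{align*}
I would partition $\rk$ into $A = \{|z|<\epsilon^2\}$, $B = \{\epsilon^2 \leq |z| \leq \epsilon\}$, $C = \{|z|>\epsilon\}$; since $\eta_\epsilon\equiv 0$ on $A$ and $\equiv 1$ on $C$, only off-diagonal cells contribute. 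Using the radial identity $|\eta_\epsilon(x')-\eta_\epsilon(y')| = |\ln(\min(|y'|,\epsilon)/\max(|x'|,\epsilon^2))|_+ / |\ln\epsilon|$ for $|x'|\leq|y'|$, the cell $A\times C$ gives a bound of order $\epsilon^{2k-3\ta-2s}$ (from $\int_A |x'|^{-\ta}dx' \sim \epsilon^{2(k-\ta)}$, $\int_C |y'|^{-(k+2s+\ta)}dy' \sim \epsilon^{-(2s+\ta)}$, and $|x'-y'| \gtrsim \epsilon$), which vanishes since $\ta \leq (k-2s)/2 < (2k-2s)/3$. The borderline cell $B\times B$ will be handled via the rescaling $x'=\epsilon z,\,y'=\epsilon w$: a prefactor $\epsilon^{k-2s-2\ta}/|\ln\epsilon|^2 \leq 1/|\ln\epsilon|^2$ appears in front of a uniformly bounded double integral over $\epsilon \leq |z|,|w|\leq 1$ (finiteness relying on $|\ln(|z|/|w|)| \lesssim |z-w|/\min(|z|,|w|)$ near the diagonal and $\ta < k$ at the origin). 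The intermediate cells $A\times B$ and $B\times C$ admit analogous estimates combining the smallness of the weighted measure of the transition annulus $B$ with either a polynomial-in-$\epsilon$ or a $|\ln\epsilon|^{-1}$ factor.

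Finally, the ``in particular'' part will follow from the pointwise decomposition $(\eta_\epsilon u)(x)-(\eta_\epsilon u)(y) = \eta_\epsilon(x')(u(x)-u(y)) + u(y)(\eta_\epsilon(x')-\eta_\epsilon(y'))$, giving
\begin{align*}
\left[\left[\eta_\epsilon u - u\right]\right]_{s,\ta,\rN}^2 \leq 2\int\int \frac{(1-\eta_\epsilon(x'))^2|u(x)-u(y)|^2}{|x-y|^{N+2s}|x'|^{\ta}|y'|^{\ta}}\,dx\,dy + 2\,\mcal{J}_\epsilon;
\end{align*}
the first summand tends to zero by Dominated Convergence (with the bound of (i) as the integrable majorant and $\eta_\epsilon \to 1$ pointwise on $\rN_k$), and the second by (ii). Since $\eta_\epsilon u$ is compactly supported away from $\{x'=0\}$, we have $\eta_\epsilon u \in C_c^{0,1}(\rN_k)$ and hence $u \in \mcal{\dot{H}}^{s,\ta}(\rN)$. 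The hardest step will be the critical case $\ta = (k-2s)/2$, where the scaling prefactor $\epsilon^{k-2s-2\ta}$ degenerates to $1$ and the required decay must be extracted entirely from the logarithmic factor $1/|\ln\epsilon|^2$ coming from the $B\times B$ contribution; controlling the intermediate cells without spoiling this cancellation is the delicate part.
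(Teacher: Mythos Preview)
Your overall strategy coincides with the paper's: reduce (ii) to a $k$-dimensional double integral by integrating out $y''$, then decompose $\rk$ into the three zones $A=\{|z|<\epsilon^2\}$, $B=\{\epsilon^2\leq|z|\leq\epsilon\}$, $C=\{|z|>\epsilon\}$ and estimate each off-diagonal block separately. Your argument for (i) and for the ``in particular'' part is correct.

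There is, however, a gap in your treatment of the $B\times B$ cell. After the rescaling $x'=\epsilon z$, $y'=\epsilon w$, the remaining integral over $\{\epsilon\leq|z|,|w|\leq 1\}$ is \emph{not} uniformly bounded in the critical case $\ta=(k-2s)/2$. Using your near-diagonal bound $|\ln(|z|/|w|)|\lesssim|z-w|/\min(|z|,|w|)$ and integrating first in $w$ leaves $\int_\epsilon^1 r^{k-1-2\ta-2s}\,dr$, which equals $|\ln\epsilon|$ when $2\ta=k-2s$; the condition ``$\ta<k$'' that you invoke is far too weak---the correct threshold for uniform boundedness is $\ta<(k-2s)/2$. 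The repair is immediate: the rescaled integral is $O(|\ln\epsilon|)$ rather than $O(1)$, and multiplied by the prefactor $|\ln\epsilon|^{-2}$ one still gets $O(|\ln\epsilon|^{-1})=o(1)$. This is precisely the computation the paper carries out in its Step~3, without rescaling. Finally, your closing sentence misplaces the difficulty: the intermediate cells $A\times B$ and $B\times C$ actually carry a genuine positive power of $\epsilon$ even at $\ta=(k-2s)/2$ (see the paper's Steps~1--2), so the only block where the logarithmic factor must supply all of the decay is $B\times B$ itself.
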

\begin{proof}
We will only prove (ii). One can easily check that (i) holds in fact for $u\in C_c^{0,1}(\rN)$. Notice that 
\begin{align}\label{log_cutoff1}
&\int_{\rN}\int_{\rN} \frac{u^2(x)\md{\eta_\epsilon(x')-\eta_\epsilon(y')}^2}{\md{x-y}^{N+2s}|x'|^{\tilde{\alpha}}|y'|^{\tilde{\alpha}}}dxdy \notag \\  &= \int_{\rk}\int_{\rk} \frac{\md{\eta_\epsilon(x')-\eta_\epsilon(y')}^2}{|x'|^{\tilde{\alpha}}|y'|^{\tilde{\alpha}}}dx'dy' \int_{\rNk} u^2(x',x'') \int_{\rNk} \frac{dy''}{\bct{|x'-y'|^2+|x''-y''|^2}^{\frac{N+2s}{2}}} dx'' \notag \\ &\leq  C \int_{\rk}\int_{\rk} \frac{\md{\eta_\epsilon(x')-\eta_\epsilon(y')}^2}{\md{x'-y'}^{k+2s}|x'|^{\tilde{\alpha}}|y'|^{\tilde{\alpha}}}dx'dy', 
\end{align}
 where (and for the rest of the proof) $C>0$ is constant depending on $N,k,s, \tilde{\alpha}$, $\nrm{u}_{L^\infty(\rN)}$ and $\operatorname{supp}u$.  We define
 \begin{align*}
 I_\epsilon(x',y') :&=  \frac{\md{\eta_\epsilon(x')-\eta_\epsilon(y')}^2}{\md{x'-y'}^{k+2s}|x'|^{\tilde{\alpha}}|y'|^{\tilde{\alpha}}}dx'dy'  \text{ and } \\
 H_\epsilon :&= \int_{\rN}\int_{\rN} I_\epsilon(x',y') dx'dy'.
 \end{align*}
Then, in view of the \eqref{log_cutoff1}, it is enough to show $H_\epsilon = o(1)$ as $\epsilon\rightarrow0.$ We define 
\begin{align*}
H_{\epsilon,1}&:= \int_{|x'|<\epsilon^2}\int_{\epsilon^2<|y'|<\epsilon}I_\epsilon, \ H_{\epsilon,2} : = \int_{|x'|>\epsilon}\int_{\epsilon^2<|y'|<\epsilon} I_\epsilon, \\ H_{\epsilon,3}  &:= \int_{|x'|<\epsilon^2} \int_{|y'|>\epsilon}, \ H_{\epsilon,4} : = \int_{\epsilon^2<|x'|<\epsilon}\int_{\epsilon^2<|y'|<\epsilon} I_\epsilon.
\end{align*}
Then using the symmetry of $I_\epsilon$ we have 
\begin{align*}
H_\epsilon: = 2H_{\epsilon,1}+ 2H_{\epsilon,2}+2H_{\epsilon,3} +H_{\epsilon,4}.
\end{align*}
We will show $H_{\epsilon,m} = o(1)$ as $\epsilon \rightarrow 0$ for $m=1,2,3,4$.
\par \textbf{Step 1:} In this step will estimate $H_{\epsilon,1}.$ For this we define 
\begin{align*}
F_{x'}: &=\{\epsilon^2<|y'|<\epsilon\} \cap \{y': |y'-x'|\geq \epsilon^2/2\} , \text{ and } \\ F'_{x'} : &= \{\epsilon^2<|y'|<\epsilon\} \cap \{y': |y'-x'|< \epsilon^2/2\}. \text{ Then }\\ 
H_{\epsilon,1} &= \int_{|x'|<\epsilon^2} \int_{F_{x'}}I_\epsilon + \int_{|x'|<\epsilon^2}\int_{F'_{x'}} I_\epsilon.
\end{align*}
We first consider 
\begin{align}\label{log_cutoff2}
\int_{|x'|<\epsilon^2}\int_{F_{x'}'}I_\epsilon &\leq \frac{C}{|\ln\epsilon|^2} \int_{|x'|<\epsilon^2} \frac{1}{|x'|^{2\tilde{\alpha}}} \int_{F'_{x'}} \frac{\md{\ln|y'|-\ln|x'|}^2}{|x'-y'|^{k+2s}} dy' dx' \notag\\ &\leq\int_0^1 \frac{C}{|\ln\epsilon|^2} \int_{|x'|<\epsilon^2} \frac{1}{|x'|^{2\tilde{\alpha}}} \int_{F'_{x'}} \frac{dy'}{|x'-y'|^{k+2s-2}\md{y'+r(x'-y')}^2}  dx' dr \notag\\ &\leq \frac{C}{|\ln\epsilon|^2} \int_{|x'|<\epsilon^2} \frac{1}{|x'|^{2\tilde{\alpha}}} \int_{F'_{x'}} \frac{dy'}{|x'-y'|^{k+2s-2}|y'|^2} dx' \notag \\ &\leq \frac{C}{\epsilon^4|\ln\epsilon|^2} \int_{|x'|<\epsilon^2} \frac{1}{|x'|^{2\tilde{\alpha}}} \int_{\{|x'-y'|<\frac{\epsilon^2}{2}\}} \frac{dy'}{|x'-y'|^{k+2s-2}} dx' \notag  \\ &\leq \frac{C}{\epsilon^4|\ln\epsilon|^2} \epsilon^{4s} \epsilon^{4-4s} = o(1),  \text{ as } \epsilon \rightarrow 0 ,
\end{align}
where in the last inequality we have used the fact that, for small \\$\epsilon>0$, $\epsilon^{k-2\tilde{\alpha}} \leq \epsilon^{4s}$, for any $0<2\tilde{\alpha}\leq k-2s$.   
Next, we consider
\begin{align}\label{log_cutoff3}
\int_{|x'|<\epsilon^2} \int_{F_{x'}} I_\epsilon &\leq \frac{C}{|\ln\epsilon|^2} \int_{|x'|\leq \epsilon^2}\frac{1}{|x'|^{\tilde{\alpha}}} \int_{F_{x'}} \frac{\ln^2(\frac{|y'|}{\epsilon^2})}{\md{x'-y'}^{k+2s}|y'|^{\tilde{\alpha}}}dy' dx' \notag \\ &\leq \frac{C\epsilon^{k-2s-2\tilde{\alpha}}}{|\ln\epsilon|^2} \int_{|x'|\leq 1}\frac{1}{|x'|^{\tilde{\alpha}}} \int_{\substack{\{1<|y'|<\frac{1}{\epsilon}\} \\ \cap \{|x'-y'|>\frac{1}{2}\}}} \frac{\ln^2|y'|}{\md{x'-y'}^{k+2s}|y'|^{\tilde{\alpha}}}dy' dx' \notag \\ &\leq o(1)+ \frac{C}{|\ln\epsilon|^2} \int_{|x'|<1} \int_{\substack{\{2<|y'|<\frac{1}{\epsilon}\} \\ \cap \{|x'-y'|>\frac{1}{2}\}}}\frac{\ln^2|y'|}{\md{x'-y'}^{k+2s}|y'|^{\tilde{\alpha}}}dy' dx'\notag \\ &\leq o(1) +\frac{C}{|\ln\epsilon|^2}\int_{\{2<|y'|<\epsilon^{-1}\}} \frac{\ln^2|y'|dy'}{|y'|^{k+2s+\frac{k-2s}{2}}} \notag \\ 
&\leq o(1) +\frac{C}{|\ln\epsilon|^2}\int_{\{2<|y'|<\epsilon^{-1}\}} \frac{\ln^2|y'|dy'}{|y'|^{k+2s}}  = o(1), \text{ as } \epsilon \rightarrow 0.
\end{align}
Hence, combining \eqref{log_cutoff2} and \eqref{log_cutoff3} we have $H_{\epsilon,1} = o(1),$ as $\epsilon \rightarrow 0$.
\par \textbf{Step 2:} In this step, we will show that $H_{\epsilon,m} =o(1) $, as $\epsilon \rightarrow 0$ for $m=2,3$. In fact, we will show this, only for the case $m=2$. The assertion, for the case, $m=3$, will follow similarly and much more easily.  \par By a change of variable we get 
\begin{align}\label{log_cutoff4}
H_{\epsilon,2} &\leq  \frac{1}{|\ln\epsilon|^2} \int_{\epsilon<|y'|<1}\frac{1}{|y'|^{2\tilde{\alpha}}}\int_{|x'|>1}  \frac{\md{\ln\frac{|y'|}{\epsilon}-\ln\frac{1}{\epsilon}}^2}{|x'-y'|^{k+2s}} dy'dx' \notag \\& \leq H'_{\epsilon,2}+H''_{\epsilon,2},
\end{align}
where 
\begin{align*}
H'_{\epsilon,2} : &= \frac{C}{|\ln\epsilon|^2} \int_0^1\int_{\epsilon<|y'|<1} \frac{1}{|y'|^{2\tilde{\alpha}}} \int_{\{|x'|>1\}\cap \{|x'-y'|\leq \frac{1}{2}\}} \frac{dx'}{|x'-y'|^{k+2s-2}|x'+r(x'-y')|^2}dy'dr \\ &\leq \frac{C}{|\ln\epsilon|^2} \int_{\epsilon<|y'|<1} \frac{1}{|y'|^{2\tilde{\alpha}}}\int_{\{|x'-y'|\leq \frac{1}{2}\}} \frac{dx'}{|x'-y'|^{k+2s-2}}dy' = o(1),  \text{ as } \epsilon \rightarrow 0,
\end{align*}
and 
\begin{align*}
H''_{\epsilon,2} : &= \frac{C}{|\ln\epsilon|^2} \int_{\epsilon<|y'|<1}\frac{1}{|y'|^{2\tilde{\alpha}}}\int_{\{|x'|>1\}\cap \{|x'-y'|\geq \frac{1}{2}\}}  \frac{\md{\ln\frac{|y'|}{\epsilon}-\ln\frac{1}{\epsilon}}^2}{|x'-y'|^{k+2s}} dy'dx' \\ &\leq  \frac{C}{|\ln\epsilon|^2} \int_{\epsilon<|y'|<1}  \frac{\ln^2|y'|}{|y'|^{2\tilde{\alpha}}}dy' = \frac{C}{|\ln\epsilon|^2}\int_\epsilon^1 \frac{\ln^2r}{r^{1-\alpha'}} dr \\ &= o(1), \text{ as } \epsilon \rightarrow 0,
\end{align*}
where $\alpha' = k-2\tilde{\alpha}\geq 2s.$ Hence, from \eqref{log_cutoff4} we have $H_{\epsilon,2}= o(1),$ as $\epsilon \rightarrow 0$. 
\par \textbf{Step 3:} In this step, we will show that $H_{\epsilon,4} =o(1) $, as $\epsilon \rightarrow 0$. Similarly, considering different regions, we see that, it is enough to show the following:
\begin{align}\label{log_cutoff5}
H_{\epsilon,4,1}: = \frac{1}{|\ln \epsilon|^2} \int\int\limits_{F}\frac{\md{\ln|x'|-\ln|y'|}^2}{\md{x'-y'}^{N+2s}|x'|^{\tilde{\alpha}}|y'|^{\tilde{\alpha}}}dx'dy' = o(1), \text{ as } \epsilon \rightarrow 0,
\end{align}
  where $F$ is defined as follows:
  \begin{align*}
  F: = \{(x',y'): \epsilon^2 < |x'|\leq |y'| <\epsilon \text{ and } |y'|<2|x'|\}.
  \end{align*}
Clearly, $F\subset \{(x',y'): \epsilon^2 < |x'|\leq |y'| <\epsilon \text{ and } |x'- y'|<3|x'|\}$. So, using, $\ln r \leq r-1$, for $r\geq 1$, we estimate
\begin{align*}
H_{\epsilon,4,1} &\leq \frac{1}{|\ln\epsilon|^2}  \int_{\epsilon^2<|x'|<\epsilon}\frac{1}{|x'|^{k-2s+2}}  \int_{|x'-y'|<3|x'|} \frac{dy'}{|x'-y'|^{k+2s-2}}dx' \\ &= \frac{C}{|\ln\epsilon|^2}\int_{\epsilon^2<|x'|<\epsilon} \frac{dx'}{|x'|^k} = o(1), \text{ as } \epsilon\rightarrow 0.
\end{align*}

Combining Step 1, Step 2 and Step 3 we conclude the lemma.

\end{proof}
In light of the Lemma \ref{log_cutoff}, it is enough to prove that $C_c^\infty(\rN)$ is dense in $\mcal{W}$ to conclude Lemma \ref{rep_sob_sp}. The following Lemma shows that, we can approximate $u\in \mcal{W}$ by a sequence of compactly supported functions lying in $\mcal{W}$.

\begin{lem}\label{cpt_apprx}
Let $u\in \mcal{W}$, $0<\tilde{\alpha}\leq \frac{k-2s}{2}$ and $\eta \in C_c^\infty\left(B_2^N(0);[0,1]\right)$ such that $\eta= 1$ in $B_1^N(0)$ and $\eta_j(x) = \eta(x/j)$. Then 
\begin{align*}
\displaystyle{\lim_{j\rightarrow \infty}} \left[\nrm{u-\eta_ju}_{2^*,\ta,\rN} + \left[\left[u-\eta_ju\right]\right]_{s,\ta,\rN}\right] =0.
\end{align*}

\end{lem}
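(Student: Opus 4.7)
The plan is to establish convergence in each component of the $\mcal{W}$-norm separately; throughout set $v_j := (1-\eta_j)u$. For the weighted $L^{2^*}$-part, $v_j\to 0$ pointwise a.e.\ and $|v_j|^{2^*}/|x'|^{\ta 2^*}\leq |u|^{2^*}/|x'|^{\ta 2^*}\in L^1(\rN)$, so dominated convergence gives $\|v_j\|_{2^*,\ta,\rN}\to 0$ immediately.

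For the Gagliardo seminorm, I start from the pointwise identity $v_j(x)-v_j(y) = (1-\eta_j(x))(u(x)-u(y)) + u(y)(\eta_j(y)-\eta_j(x))$ and the inequality $(a+b)^2\leq 2a^2+2b^2$ to bound $\left[\left[v_j\right]\right]^2_{s,\ta,\rN}\leq 2A_j+2B_j$ with
\begin{align*}
A_j &:= \int_{\rN}\int_{\rN} \frac{(1-\eta_j(x))^2(u(x)-u(y))^2}{|x-y|^{N+2s}|x'|^{\ta}|y'|^{\ta}}\,dxdy,\\
B_j &:= \int_{\rN}\int_{\rN} \frac{u^2(y)(\eta_j(x)-\eta_j(y))^2}{|x-y|^{N+2s}|x'|^{\ta}|y'|^{\ta}}\,dxdy.
\end{align*}
Then $A_j\to 0$ by dominated convergence: $(1-\eta_j(x))^2\to 0$ a.e.\ and the integrand is dominated by the integrable kernel $(u(x)-u(y))^2/[|x-y|^{N+2s}|x'|^\ta|y'|^\ta]$, which belongs to $L^1(\rN\times\rN)$ since $u\in\mcal{W}$.

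The substantive step is $B_j\to 0$. Fubini and the substitution $x=jw$ in the inner integral yield $B_j = \int_{\rN} u^2(y)G_j(y)|y'|^{-\ta}\,dy$ with $G_j(y) = j^{-(2s+\ta)}h(y/j)$ and
\[
h(\tilde y) := \int_{\rN}\frac{(\eta(w)-\eta(\tilde y))^2}{|w-\tilde y|^{N+2s}|w'|^\ta}\,dw.
\]
A short estimation (using $\ta<k$ at the singularity $w=\tilde y$ and $\eta\in C_c^\infty(B_2^N(0))$ for the decay) shows that $h$ is bounded on $\rN$ and $h(\tilde y) = O(|\tilde y|^{-(N+2s)})$ as $|\tilde y|\to\infty$. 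Since $j^{-(2s+\ta)}\to 0$, the integrand of $B_j$ tends to $0$ pointwise a.e.

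To upgrade this to $L^1$-convergence I plan to invoke Vitali's convergence theorem rather than chase a single dominating function; the naive pointwise bound $G_j(y)\leq C|y|^{-(2s+\ta)}$ leads to a dominator $u^2/|y'|^{2s+2\ta}$ whose integrability via fractional Hardy-Maz'ya degenerates precisely at the critical value $\ta=(k-2s)/2$. The key estimate is H\"older with exponents $N/(N-2s)$ and $N/(2s)$: for any measurable $E\subset\rN$,
\[
\int_E \frac{u^2(y)G_j(y)}{|y'|^\ta}\,dy \leq \bigl\|u/|y'|^\ta\bigr\|_{L^{2^*}(E)}^2 \cdot \bigl\| |y'|^\ta G_j \bigr\|_{L^{N/(2s)}(\rN)}.
\]
A scaling computation $y=j\tilde y$ shows the powers of $j$ cancel in $\bigl\||y'|^\ta G_j\bigr\|^{N/(2s)}_{L^{N/(2s)}(\rN)} = \int_{\rN}|\tilde y'|^{\ta N/(2s)}h(\tilde y)^{N/(2s)}\,d\tilde y$, a constant independent of $j$ and finite thanks to the bounds on $h$ and $\ta<N$. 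Since $u/|y'|^\ta\in L^{2^*}(\rN)$, absolute continuity of the $L^{2^*}$-norm then gives both uniform absolute continuity ($\sup_j\int_E$ tends to $0$ as $|E|\to 0$) and tightness ($\sup_j\int_{|y|>R}$ tends to $0$ as $R\to\infty$) of $\{u^2G_j/|y'|^\ta\}_j$ in $L^1(\rN)$, so Vitali's theorem yields $B_j\to 0$, completing the proof.
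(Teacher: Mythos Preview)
Your overall strategy---reducing to $B_j\to 0$, rescaling to a single profile $h$, and closing with H\"older plus Vitali---is sound and genuinely different from the paper's approach. The paper instead splits the integration region $\{|y|>j\}\times\rN$ into three zones $D_{j,0}, D_{j,1}, D_{j,2}$ according to the relative sizes of $|x|, |y|, |x-y|$, applies H\"older in each zone (same exponents $N/(2s)$ and $N/(N-2s)$ as yours, but with a zone-dependent parameter $\sigma_m$ tuning how the kernel exponent is distributed between the two factors), and arrives directly at the bound $I_j \leq C\|u\|^2_{L^{2^*}(\rN\setminus B_j^N(0);\,|x'|^{-\ta 2^*})}\to 0$. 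Your route is more conceptual in that a single scaling identity replaces the case analysis; the paper's is more hands-on but avoids any pointwise study of $h$.

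There is, however, a genuine gap. The claim that $h$ is bounded on $\rN$ fails in part of the admissible parameter range: when $\tilde y'=0$ the singularities $w=\tilde y$ and $w'=0$ coalesce, and the local contribution is governed by $\int_{|z|<1}|z|^{-(N+2s-2)}|z'|^{-\ta}\,dz$, which diverges once $\ta\geq 2-2s$. This does occur---for instance at the critical value $\ta=(k-2s)/2$ whenever $k\geq 4-2s$. The condition $\ta<k$ you invoke only handles the $w'=0$ singularity \emph{away} from $w=\tilde y$. What is actually true, and is exactly what your H\"older step requires, is that $|\tilde y'|^{\ta}h(\tilde y)$ is bounded on compact sets: splitting the inner integral according to whether $|w-\tilde y|$ and $|w'|$ are small or large relative to $|\tilde y'|$, one gets $|\tilde y'|^{\ta}h(\tilde y)\leq C$ for $|\tilde y|\leq 4$, and together with the decay at infinity this yields $|\tilde y'|^{\ta}h(\tilde y)\leq C(1+|\tilde y|)^{-(N+2s-\ta)}$. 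Then $\int(|\tilde y'|^{\ta}h)^{N/(2s)}\,d\tilde y<\infty$ follows from $\ta<N$ exactly as you say. The same refined bound also repairs the pointwise-convergence step, since it gives $G_j(y)\leq Cj^{-2s}|y'|^{-\ta}$ for $|y|\leq 4j$ and hence $u^2(y)G_j(y)/|y'|^{\ta}\to 0$ a.e.
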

\begin{proof}
We define
\begin{align*}
I_j : = \int_{\rN}\int_{\rN\setminus B_j^N(0)} \frac{|u(y)|^2\md{\eta_j(x)-\eta_j(y)}^2}{|x-y|^{N+2s}|x'|^{\tilde{\alpha}}|y'|^{\tilde{\alpha}}} dy dx.
\end{align*}
Since, $\eta = 1$ on $B_1^N(0),$ so, to prove the Lemma, it is enough to prove, $\displaystyle{\lim_{j\rightarrow \infty}}I_j = 0$. We define the following sets
\begin{align*}
D_{j,0}&: = \{(x,y) \in \rN\times (\rN\setminus B_j^N(0)) : |x|\leq |y|/2\},\\ D_{j,1}&: = \{(x,y) \in \rN\times (\rN\setminus B_j^N(0)) : |x|\geq |y|/2 \text{ and } |x-y|\geq j\},  \\  D_{j,2}&: = \{(x,y) \in \rN\times (\rN\setminus B_j^N(0)) : |x|\geq |y|/2 \text{ and } |x-y|\leq j\} . 
\end{align*}
For $m=0,1,2$, we write 
\begin{align*}
I_{j,m} : = \int\int_{D_{j,m}} \frac{|u(y)|^2\md{\eta_j(x)-\eta_j(y)}^2}{|x-y|^{N+2s}|x'|^{\tilde{\alpha}}|y'|^{\tilde{\alpha}}} dy dx.
\end{align*}
Then
\begin{align}\label{cpt_apprx1}
 I_j = I_{j,0}+I_{j,1}+I_{j,2}.
\end{align}
Now, we break
\begin{align*}
\frac{|u(y)|^2\md{\eta_j(x)-\eta_j(y)}^2}{|x-y|^{N+2s}|x'|^{\tilde{\alpha}}|y'|^{\tilde{\alpha}}} = \frac{|\eta_j(x)-\eta_j(y)|^2}{|x-y|^{2s+2\sigma_m}} \frac{|u(y)|^2}{|x-y|^{N-2\sigma_m|x'|^{\tilde{\alpha}}|y'|^{\tilde{\alpha}}}},
\end{align*}
where $\sigma_0 = s,$ $\sigma_1 \in (0,s)$ and $s<\sigma_2<1$ such that $\frac{N(N-2\sigma_2)}{N-2s}>\max\{N-k,k\}.$ We denote, $\sigma'_m := \frac{N-2\sigma_2}{N-2s}$. 
Then using H\"older inequality
\begin{align}\label{cpt_apprx2}
I_{j,m} \leq \bct{\int\int_{D_{j,m}} \frac{|\eta_j(x)-\eta_j(y)|^{\frac{N}{s}}}{\md{x-y}^{N+\sigma_m\frac{N}{s}}}dxdy}^{\frac{2s}{N}} \bct{\int\int_{D_{j,m}} \frac{|u(y)|^{2^*} }{|x-y|^{N\sigma'_m }|x'|^{\tilde{\alpha}} |y'|^{\tilde{\alpha}}}dx dy } ^{\frac{N-2s}{N} }.
\end{align}
Clearly, 
\begin{align}\label{cpt_apprx3}
\int\int_{D_{j,m}} \frac{|\eta_j(x)-\eta_j(y)|^{\frac{N}{s}}}{\md{x-y}^{N+\sigma_m\frac{N}{s}}}dxdy \leq j^{(s-\sigma_m)\frac{N}{s}} \int_{\rN}\int_{\rN} \frac{|\eta_j(x)-\eta_j(y)|^{\frac{N}{s}}}{\md{x-y}^{N+\sigma_m\frac{N}{s}}}dxdy \leq C j^{\frac{(s-\sigma_m)N}{s}}.
\end{align}
Now, we consider 
\begin{align}\label{cpt_apprx4}
\int\int_{D_{j,0}} \frac{|u(y)|^{2^*} }{|x-y|^{N\sigma'_0 }|x'|^{\tilde{\alpha}} |y'|^{\tilde{\alpha}}}dx dy  &\leq \int_{|y|>j} \frac{|u(y)|^{2^*}}{|y'|^\frac{2^*{\tilde{\alpha}}}{2}} \int_{|x|<\frac{|y|}{2} } \frac{dx}{|x-y|^N|y'|^\frac{2^*{\tilde{\alpha}}}{2}} dy\notag \\ &\leq C\int_{|y|>j} \frac{|u(y)|^{2^*}}{|y'|^{\frac{2^*{\tilde{\alpha}}}{2}}|y|^{\frac{{\tilde{\alpha}}2^*}{2}}}dy \leq C \int_{|y|>j} \frac{|u(y)|^{2^*}}{|y'|^{\alpha2^*}}dy.
\end{align}
Since, $N\sigma'_1>N>N-k$, we estimate
\begin{align}\label{cpt_apprx5}
\int\int_{D_{j,1}} \frac{|u(y)|^{2^*} }{|x-y|^{N\sigma'_1 }|x'|^{\tilde{\alpha}} |y'|^{\tilde{\alpha}}}dx dy &\leq \int_{|y|>j} \int_{\substack{\{|x-y|>j\} \\ \cap \{|x'|\leq \frac{|y'|}{2} \}}} \frac{|u(y)|^{2^*} }{|x-y|^{N\sigma'_1 }|x'|^{\tilde{\alpha}} |y'|^{\tilde{\alpha}}}dx dy\notag \\ &+ \int_{|y|>j} \int_{\substack{\{|x-y|>j\} \\ \cap \{|x'|\geq \frac{|y'|}{2}\}}}\frac{|u(y)|^{2^*} }{|x-y|^{N\sigma'_1 }|x'|^{\tilde{\alpha}} |y'|^{\tilde{\alpha}}}dx dy \notag\\  &\leq C \int_{|y|>j} \frac{|u(y)|^{2^*}}{|y'|^{\frac{{\tilde{\alpha}}2^*}{2}}} \int_{|x'|<\frac{|y'|}{2}} \frac{dx'}{|x'-y'|^{N\sigma'_1-N+k} |x'|^{\frac{{\tilde{\alpha}}2^*}{2}}} dy' \notag \\ &+ C\int_{|y|>j}\frac{|u(y)|^{2^*}}{|y'|^{2^*{\tilde{\alpha}}}} \int_{|x-y|>j} \frac{dx}{|x-y|^{N\sigma'_1}} dy \notag \\ &\leq C\int_{|y|>j} \frac{|u(y)|^{2^*}}{|y'|^{{\tilde{\alpha}}2^*}} \frac{dy}{|y'|^{N\sigma'_1-N}} \notag \\&+ C\frac{1}{j^{N\sigma'_1-1}}\int_{|y|>j} \frac{|u(y)|^{2^*}}{|y'|^{2^*{\tilde{\alpha}}}}dy \notag \\ &\leq \frac{C}{j^{\frac{2N(s-\sigma_1)}{N-2s}}} \int_{|y|>j} \frac{|u(y)|^{2^*}}{|y'|^{2^*{\tilde{\alpha}}}}dy.
\end{align}
Similarly, using $N>N\bar{\sigma}_2> \max\{ N-k,k\}$, we can derive
\begin{align}\label{cpt_apprx6}
\int\int_{D_{j,2}} \frac{|u(y)|^{2^*} }{|x-y|^{N\sigma'_1 }|x'|^{\tilde{\alpha}} |y'|^{\tilde{\alpha}}}dx dy \leq  \frac{C}{j^{\frac{2N(s-\sigma_2)}{N-2s}}} \int_{|y|>j} \frac{|u(y)|^{2^*}}{|y'|^{2^*{\tilde{\alpha}}}}dy.
\end{align}

Hence, plugging \eqref{cpt_apprx3}, \eqref{cpt_apprx4}, \eqref{cpt_apprx5} and \eqref{cpt_apprx6} into \eqref{cpt_apprx2} and then using  \eqref{cpt_apprx1} we get 
\begin{align*}
I_j \leq C \nrm{u}^2_{L^{2^*}\left(\rN\setminus B_j^N(0); \frac{1}{|x'|^{2{\tilde{\alpha}}}}\right)} \rightarrow 0 , \text{ as } j \rightarrow \infty.
\end{align*}
This proves the lemma.
\end{proof}

The next proposition is a reminiscence of the fact, that $\Theta$ is in $A_1$. Although, in this case, the proof is a direct consequence of Proposition $4.1$ and $4.2$ of \cite{DV}.
\begin{prop}\label{a1property}
There exists a constant $C>0$ such that for every $X\in \rN_k\times\rN_k$, when $N'=2N$ and $X\in \rN_k$, when $N'=N$, the following inequality is true 
\begin{align*}
\displaystyle{\sup_{r>0}}\frac{1}{r^N}\int_{B^N_r(0)}\frac{dz}{\Theta\left(X+w(z)\right)} \leq \frac{C}{\Theta(X)}.
\end{align*}
\end{prop}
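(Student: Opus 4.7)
The plan is to reduce Proposition~\ref{a1property} to the classical Muckenhoupt $A_1$ property of the weight $|x'|^{-\gamma}$ on $\rk$ for $0<\gamma<k$, after first integrating out the $x''$--variables. Since in both cases the integrand $1/\Theta(X+w(z))$ depends on $z$ only through its projection $z'\in\rk$, Fubini gives
\begin{align*}
\int_{B_r^N(0)}\frac{dz}{\Theta(X+w(z))}
\;\leq\; \omega_{N-k}\,r^{N-k}\int_{B_r^k(0)}\frac{dz'}{\Theta(X+w(z))},
\end{align*}
where on the right only $z'$ appears. This reduces everything to a one--layer estimate in $\rk$.

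The key ingredient is then the standard one-dimensional-profile statement: for any $0<\gamma<k$ there is $C_{k,\gamma}$ with
\begin{align*}
\frac{1}{r^k}\int_{B_r^k(0)}\frac{dz'}{|x'+z'|^{\gamma}}\;\leq\;\frac{C_{k,\gamma}}{|x'|^{\gamma}},\qquad x'\in\rk\setminus\{0\},\ r>0.
\end{align*}
I would prove this in the usual way by splitting the domain into $\{|x'+z'|\geq |x'|/2\}$, where the integrand is trivially bounded by $2^{\gamma}|x'|^{-\gamma}$, and $\{|x'+z'|<|x'|/2\}$, which is contained in $B_{|x'|/2}^{k}(-x')$ and only meets $B_r^{k}(0)$ when $r\geq |x'|/2$; a direct computation on that small ball produces the bound $c_k|x'|^{k-\gamma}$, and dividing by $r^k\geq (|x'|/2)^k$ yields the claim.

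For the case $N'=N$ I would apply this with $\gamma=\tilde\alpha 2^{*}$. The constraint $\gamma<k$ follows because $\tilde\alpha\leq(k-2s)/2$ and $2^{*}=2N/(N-2s)$, so
\begin{align*}
\tilde\alpha\,2^{*}\;\leq\;\frac{(k-2s)N}{N-2s},
\end{align*}
and $(k-2s)N<k(N-2s)$ is equivalent to $k<N$, which holds by the hypothesis $2\leq k\leq N-2$. For the case $N'=2N$, where $\Theta(X+w(z))=|x'+z'|^{\tilde\alpha}|y'+z'|^{\tilde\alpha}$, I would split the product via Cauchy--Schwarz:
\begin{align*}
\int_{B_r^N(0)}\frac{dz}{|x'+z'|^{\tilde\alpha}|y'+z'|^{\tilde\alpha}}
\leq
\left(\int_{B_r^N(0)}\frac{dz}{|x'+z'|^{2\tilde\alpha}}\right)^{1/2}\!
\left(\int_{B_r^N(0)}\frac{dz}{|y'+z'|^{2\tilde\alpha}}\right)^{1/2}\!,
\end{align*}
then apply the previous step with $\gamma=2\tilde\alpha$, which is admissible since $2\tilde\alpha\leq k-2s<k$. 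Multiplying the two resulting factors gives the bound $C/(|x'|^{\tilde\alpha}|y'|^{\tilde\alpha})=C/\Theta(X)$.

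There is no real obstacle here: all of the analytic content sits in the classical $A_1$ estimate for $|x'|^{-\gamma}$ on $\rk$, and the only thing to keep track of carefully is that the exponents produced by the formulas for $\Theta$ and $w$ stay strictly below $k$; the computation above verifies this using the standing hypotheses $2\leq k\leq N-2$ and $0<\tilde\alpha\leq(k-2s)/2$.
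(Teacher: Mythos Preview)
Your proof is correct and follows essentially the same approach as the paper's, which simply defers to Propositions~4.1 and~4.2 of \cite{DV} for the $A_1$ property of these power weights. Your argument makes that citation self-contained: the reduction to $\rk$ via Fubini, the standard two-region proof of the $A_1$ bound for $|x'|^{-\gamma}$ with $\gamma<k$, the exponent check $\tilde\alpha\,2^*<k$ (equivalent to $k<N$) in the case $N'=N$, and the Cauchy--Schwarz splitting together with $2\tilde\alpha\leq k-2s<k$ in the case $N'=2N$, are all correct and exactly the content one would extract from \cite{DV}.
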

Using Proposition \ref{a1property} and the fact, that the measure $\frac{dX}{\Theta(X)}$, is finite on compact sets of $\ro^{N'}$, we can derive the following lemma which is related to the boundedness of the maximal operator.
\begin{lem}\label{max_op}
Let $q>1$ and $V:\ro^{N'}\rightarrow \ro$ be a measurable function. Then, for any $r>0$,
\begin{align*}
\int_{\ro^{N'}} \bct{\frac{1}{r^n}\int_{B_r^{N'}(0)} \md{V(X-w(z))}dz}^q\frac{dX}{\Theta(X)} \leq C \int_{\ro^{N'}} \frac{|V(X)|^q}{\Theta(X)},
\end{align*}
for some constant $C>0$.
\end{lem}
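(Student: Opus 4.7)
The plan is to reduce the $L^q$ bound directly to the $L^1$-type averaging bound delivered by Proposition \ref{a1property}, using Jensen's inequality to absorb the $q$-th power inside the average, followed by Fubini's theorem and a translation in the outer variable.

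First I would apply Jensen's inequality to the probability measure $\frac{dz}{\omega_N r^N}$ on $B_r^N(0)$, where $\omega_N = |B_1^N(0)|$. This gives
\begin{align*}
\left(\frac{1}{r^N}\int_{B_r^N(0)} |V(X-w(z))|\,dz\right)^q \leq \omega_N^{q-1}\,\frac{1}{r^N}\int_{B_r^N(0)} |V(X-w(z))|^q \,dz,
\end{align*}
which removes the nonlinearity and replaces $|V|$ by $|V|^q$ inside the average. Integrating against $\Theta^{-1}dX$ and invoking Fubini's theorem then yields
\begin{align*}
\int_{\mathbb{R}^{N'}} \left(\frac{1}{r^N}\int_{B_r^N(0)} |V(X-w(z))|\,dz\right)^q \frac{dX}{\Theta(X)} \leq \frac{\omega_N^{q-1}}{r^N}\int_{B_r^N(0)}\int_{\mathbb{R}^{N'}} \frac{|V(X-w(z))|^q}{\Theta(X)}\,dX\,dz.
\end{align*}

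Next I would perform the translation $Y = X - w(z)$ in the inner integral (noting that $w(z)$ does not depend on $X$, so the Lebesgue measure is invariant), and then swap the order of integration once more to isolate the weight:
\begin{align*}
\frac{\omega_N^{q-1}}{r^N}\int_{B_r^N(0)}\int_{\mathbb{R}^{N'}} \frac{|V(Y)|^q}{\Theta(Y+w(z))}\,dY\,dz = \omega_N^{q-1}\int_{\mathbb{R}^{N'}} |V(Y)|^q \left(\frac{1}{r^N}\int_{B_r^N(0)} \frac{dz}{\Theta(Y+w(z))}\right) dY.
\end{align*}
At this point Proposition \ref{a1property} applies directly to the inner average, producing the pointwise bound $\frac{1}{r^N}\int_{B_r^N(0)} \frac{dz}{\Theta(Y+w(z))} \leq \frac{C}{\Theta(Y)}$, and the desired inequality follows with a constant depending only on $N$, $q$ and the constant from Proposition \ref{a1property}.

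There is no real obstacle in this argument; the only thing to be careful about is verifying that Fubini is legitimate, which is immediate once we assume $V \in L^q(\mathbb{R}^{N'};\Theta^{-1})$ (otherwise both sides are infinite and there is nothing to prove), and that the translation change of variables is harmless since $w(z)$ is merely a shift. The proof is essentially the standard $A_1$-to-$L^q$ argument in disguise, with the role of the Hardy--Littlewood maximal function being played by the explicit averaging operator over $B_r^N(0)$ and the weight $\Theta^{-1}$ serving as the $A_1$ weight by virtue of Proposition \ref{a1property}.
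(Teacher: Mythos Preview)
Your proof is correct and is essentially the argument the paper has in mind: the paper does not spell out a proof but simply says the lemma follows from Proposition~\ref{a1property} (together with local finiteness of $\Theta^{-1}\,dX$), referring implicitly to the computations in \cite{DV}. Your Jensen $+$ Fubini $+$ translation argument is precisely the standard way to turn the $A_1$-type pointwise bound of Proposition~\ref{a1property} into the stated $L^q$ inequality, and in fact it is slightly more direct than invoking maximal-function machinery, since for a \emph{fixed} radius $r$ no supremum over scales is needed.
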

Next, for $V:\ro^{N'}:\rightarrow \ro$ measurable, we define the following operator
\begin{align*}
V\star \eta_0 (X) :=  \int_{\rN} V(X-w(z))\eta_0(z)dz, 
\end{align*}
 where $\eta_0$ is a radially symmetric mollifier in $\rN$, with $\eta_0\geq 0$ and $\operatorname{supp}\eta_0 \subset B_1^N(0)$. Notice that, when $N'= N$, $V\star \eta_0$ coincides with the usual convolution operator $V*\eta_0$. As a consequence of Lemma \ref{max_op}, we could control appropriate weighted $L^p$ norm of $V\star \eta_0$. More precisely, we could derive the following proposition.
 \begin{prop}\label{conv_contrl}
There exists a constant $C>0$, such that for any measurable function $V: \ro^{N'}\rightarrow \ro$ we have
\begin{align*}
\int_{\ro^{N'}} \md{V\star \eta_0}^p \frac{dX}{\Theta(X)} \leq C\int_{\ro^{N'}}|V(X)|^p\frac{dX}{\Theta(X)},
\end{align*}
where $p=2$, when $N'=2N$ and $p=2^*$, when $N'=N$.
 \end{prop}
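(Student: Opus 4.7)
The plan is to reduce Proposition \ref{conv_contrl} directly to Lemma \ref{max_op} via an elementary pointwise estimate. The key observation is that since $\eta_0$ is a nonnegative radial mollifier supported in $B_1^N(0)$, it is bounded; set $M := \|\eta_0\|_{L^\infty(\rN)}$. For any measurable $V : \ro^{N'} \rightarrow \ro$ one then has
\begin{align*}
|V \star \eta_0(X)|
\,\leq\, \int_{B_1^N(0)} |V(X - w(z))|\, \eta_0(z)\, dz
\,\leq\, M\, |B_1^N(0)| \cdot A_1 |V|(X),
\end{align*}
where $A_r V(X) := r^{-N}\int_{B_r^N(0)} |V(X - w(z))|\, dz$ is precisely the averaging operator whose weighted $L^q$-boundedness is asserted by Lemma \ref{max_op}.

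With this pointwise control in hand, I would raise both sides to the $p$-th power (observing that $p>1$ in both cases, $p=2$ when $N'=2N$ and $p=2^*$ when $N'=N$), integrate against $dX/\Theta(X)$, and invoke Lemma \ref{max_op} with $r=1$ and $q=p$ to obtain
\begin{align*}
\int_{\ro^{N'}} |V \star \eta_0|^p\, \frac{dX}{\Theta(X)}
\,\leq\, \bigl(M\,|B_1^N(0)|\bigr)^p \int_{\ro^{N'}} (A_1|V|)^p\, \frac{dX}{\Theta(X)}
\,\leq\, C \int_{\ro^{N'}} |V(X)|^p\, \frac{dX}{\Theta(X)},
\end{align*}
which is the desired inequality; the final constant $C$ depends only on $N$, $N'$, $k$, $s$, $\tilde{\alpha}$ and $\eta_0$, but is independent of $V$.

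There is essentially no obstacle in this argument: the analytic content of the statement has already been packed into the $A_1$-type property of $\Theta$ in Proposition \ref{a1property} and its consequence, the averaging estimate of Lemma \ref{max_op}; Proposition \ref{conv_contrl} is merely the specialization of Lemma \ref{max_op} to the specific averaging kernel $\eta_0$. If one preferred to avoid the $L^\infty$-bound on $\eta_0$, a more flexible alternative would be to apply Jensen's inequality treating $\eta_0/\|\eta_0\|_{L^1}$ as a probability density, or to use Minkowski's integral inequality combined with the layer-cake decomposition $\eta_0(z) = \int_0^{\|\eta_0\|_\infty} \chi_{\{\eta_0 > t\}}(z)\, dt$. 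By radial monotonicity of $\eta_0$ each superlevel set is a ball $B_{r(t)}^N(0)$, and applying Lemma \ref{max_op} to each $A_{r(t)}$ with a constant independent of $r$, followed by integration in $t$, would again give the conclusion.
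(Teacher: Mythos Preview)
Your proposal is correct and follows exactly the route the paper indicates: the paper itself offers no detailed proof, only stating that Proposition \ref{conv_contrl} is a consequence of Lemma \ref{max_op}, and your pointwise bound $|V\star\eta_0|\leq M\,|B_1^N(0)|\,A_1|V|$ followed by an application of Lemma \ref{max_op} with $r=1$ and $q=p>1$ is precisely the intended argument.
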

\subsection{Proof Of Lemma \ref{rep_sob_sp}}
\begin{proof}
We define 
\begin{align*}
L^p(\ro^{N'};\Theta) := \{V:\ro^{N'}\rightarrow \ro \text{ measurable }: \int_{\ro^{N'}} |V(X)|^p \frac{dX}{\Theta(X)} <\infty\},
\end{align*}
where $p$ is defined in the Proposition \ref{conv_contrl}. Then, since $\frac{dX}{\Theta(X)}$ is finite on compact sets of $\ro^{N'}$, so using Lusin' s theorem and Proposition \ref{conv_contrl},  we can prove that $C_c^\infty(\ro^{N'})$ is dense in $L^p(\ro^{N};\Theta)$. As a consequence of this density and Proposition \ref{cpt_apprx} and the fact, that for any $u\in \mcal{W}$ and $\eta \in C_c^\infty(\rN)$, $V^u*\eta = V^{u*\eta}$, we can prove that $C_c^\infty(\rN)$ is dense in $\mcal{W}$, where $V^u(x,y): = \frac{u(x)-u(y)}{\md{x-y}^{\frac{N}{2}+s}}$, for $x,y \in \rN$. This proves that, $\mcal{\dot{H}}^{s,\ta}(\rN) = \mcal{W},$ which is exactly what we wanted to prove in Lemma \ref{rep_sob_sp}.
\end{proof}

\section*{Acknowledgments}
I would like to thank my Ph.D supervisor Prof. K. Sandeep for countless valuable discussions and suggestions.

\def\cprime{$'$}





\end{document}